\newcounter{foo}
\newcounter{foo2}
\newtheorem*{rep@theorem}{\rep@title}
\newcommand{\newreptheorem}[2]{%
\newenvironment{rep#1}[1]{%
 \def\rep@title{#2 \ref{##1}}%
 \begin{rep@theorem}}%
 {\end{rep@theorem}}}
\newtheorem{thm}[foo]{Theorem}
\newtheorem{lem}[foo]{Lemma}
\newtheorem{cor}[foo]{Corollary}
\newtheorem{prop}[foo]{Proposition}
\theoremstyle{remark}
\newtheorem{rem}[foo2]{Remark}
\newcommand{\RR}{\mathbb{R}}
\newcommand{\NN}{\mathbb{N}}
\newcommand{\LL}[1]{L^{#1}}
\newcommand{\LPN}[2]{\left\|#2\right\|_{\LL{#1}}}
\newcommand{\HN}[2]{\left\|#2\right\|_{H^{#1}}}
\newcommand{\ip}[1]{\langle#1\rangle}
\newcommand{\abs}[1]{\left|#1\right|}
\newcommand{\norm}[1]{\left\|#1\right\|}
\newcommand{\cinfty}{C^{\infty}}
\newcommand{\testf}{C^{\infty}_{0}}
\DeclareMathOperator{\sech}{sech}
\DeclareMathOperator{\KdV}{KdV}
\DeclareMathOperator{\mKdV}{mKdV}
\DeclareMathOperator{\miura}{M}
\title{The Korteweg--de Vries equation at $H^{-1}$ regularity}      
\author{Tristan Buckmaster}     
\address{Mathematisches Institut der Universität  Leipzig and Max-Planck-Insitut f\"ur Mathematik in den Naturwissenschaften}
\email{tristan.buckmaster@gmail.com}
\author{Herbert Koch} 
\address{Mathematisches Institut der Universität Bonn}
\email{koch@math.uni-bonn.de} 
 \thanks{This work was supported  by the DFG through the Hausdorff center for Mathematics, Bonn}
\date{}        
\begin{document}
\maketitle
\begin{abstract} In this paper we will prove the existence of weak
  solutions to the Korteweg--de Vries initial value problem on the
  real line with $H^{-1}$ initial data; moreover, we will study the
  problem of orbital and asymptotic $H^{s}$ stability of solitons for integers $s\geq -1$; finally, we will also
  prove new a priori $H^{-1}$ bound for solutions to the Korteweg--de
  Vries equation. The paper will utilise the Miura transformation to
  link the Korteweg--de Vries equation to the modified Korteweg--de
  Vries equation. \end{abstract}

\section{Introduction and statement of result}
Consider the initial value problem (IVP) of the \emph{Korteweg-de Vries} (KdV) equation:
\begin{equation}\label{KdV}
\left\{\begin{array}{l}
u_t + u_{xxx} - 6 u u_x  =0\\
u(0,x) = u_0(x)
\end{array}\right.,
\end{equation}
for $x\in \RR$ and rough initial data $u_0$ in the Sobolev space $H^s$.

It is well known that the KdV equation exhibits special travelling
wave solutions, known as solitons -- indeed such solutions provided
much of the historical impetus to study the equation.  Explicitly, up
to a spatial translation, these solutions may be written in the form
\begin{equation}
u:=R_c(x-ct),
\end{equation}  
where $c>0$ and
\begin{equation}
R_c:=-\frac{c}{2}\sech^2\left(\frac{\sqrt{c}x}{2}\right).
\end{equation}

Let us summarise the well-posedness theory and stability results. 
The initial value for KdV is known to be globally
well-posed\footnote{See \cite{MR2233925} for a discussion on the
  subtleties in the definition of well-posedness.} for $s\geq
-\frac{3}{4}$, (see \cite{MR2531556}, \cite{MR1969209}, 
\cite{MR2233925} and \cite{MR2501679}).  The problem is known to be ill-posed for $s<
-\frac{3}{4}$ in the sense that the flow map cannot be uniformly
continuous  \cite{MR1813239}.  One may hope for Hadamard
well-posedness for $s\ge -1$, (cf. \cite{MR2353092}, \cite{2011arXiv1112.5177L} and
\cite{MR2267286}). Using the inverse scattering transform, Kappeler and Topalov proved that the flow map extends 
continuously to $H^{-1} $ in the periodic case, 
which provides motivation to address  the supposedly simpler question of well-posedness in $H^{-1}$ 
on the real line.  On the other hand Molinet \cite{MOLINET} has shown that 
no well-posedness can possibly hold below $s=-1$:
the solution map $u_0
\to u(t)$ does not extend to a continuous map from $H^s$, for $s < -1$, to
distributions.

Orbital stability of the soliton in the energy space $H^1$ follows
from Weinstein's convexity argument \cite{MR820338}, this argument even holds for other
sub-critical gKdV equations. Weinstein's argument is at the basis of a
considerable amount of work since then, with one direction culminating 
in the seminal work of Martel and Merle to some version of asymptotic
stability, again in the energy space \cite{MR1826966}.  Merle and Vega
proved orbital stability and asymptotic stability of the soliton
manifold in $L^2$ using the Miura map \cite{MR1949297} in a similar
fashion to our approach.  Their approach to the stability of the kink
however is closer to the arguments of Martel and Merle for generalised
KdV.

We now present our principal results.

\begin{thm}[New $H^{-1}$ a priori estimate for KdV]\label{aprioriTHM}
Suppose $s \geq -\frac34$ and $u\in C([0,\infty);H^s(\RR))$ is a solution to \eqref{KdV}, then
\begin{equation}\label{apriori}
\HN{-1}{u(t,\cdot)} \lesssim \footnotemark  \HN{-1}{u_0}+ \HN{-1}{u_0}^3   ~~\text{ for }~~ t\in[0,\infty).
\end{equation}\footnotetext{Throughout this article we will adopt the notation $a\lesssim b$ to signify $a\leq Cb$, where $C$ is an insignificant constant.}
\end{thm}
\begin{rem}
Applying scaling, the dependence on the $H^{-1}$ norm of the initial data in \eqref{apriori} can be made more explicit, i.e. if $\lambda$ is such that
\[ 0 < \lambda  \le    \HN {-1}{ u_0}^{-2},   \]
then we have 
\begin{equation*}
  \HN{-1}{u(t,\lambda \cdot)} \lesssim  \HN{-1}{u_0(\lambda \cdot ) }  ~~\text{ for }~~ t\in[0,\infty).
\end{equation*}
\end{rem}

\begin{thm}[Orbital stability of KdV solitons]\label{KdVstab} 
There exists an $\varepsilon >0$ such that 
if $u\in C([0,\infty);H^s(\RR)\cap H^{-3/4}(\RR))$ is a solution to (\ref{KdV}), for some integer $s\geq-1$, satisfying $\HN{-1}{R_c-u_0}<\varepsilon c^{1/4}$ for some $c>0$, then there is a continuous function $y:[0,\infty)\to\RR$ such that 
\[ \HN{s}{ u - R_{ c}(x-y(t))} \leq \gamma_{s}(c,\HN{s} {R_c-u_0})\] 
for any $t\geq 0$, where $\gamma_{s}:(0,\infty)\times [0,\infty)$ is a continuous function, 
polynomial in the second variable, which  satisfies $\gamma(\cdot,0)=0$.
\end{thm}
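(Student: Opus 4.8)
The plan is to transfer the problem to the modified Korteweg--de Vries equation through the Miura map $\miura$, exploiting the fact that the KdV soliton $R_c$ is the image of an explicit mKdV \emph{kink} and that orbital stability is more tractable at the mKdV level thanks to a coercive conserved functional. Write $\miura(w)=w_x+w^2$ and let $\tilde R_c=-\frac{\sqrt c}{2}\tanh\!\left(\frac{\sqrt c x}{2}\right)$, so that $\miura(\tilde R_c)=R_c$ up to an additive constant; $\tilde R_c$ is a bounded, non-decaying background, and perturbations of it will be measured in $H^{s+1}$, consistently with $\miura$ lowering regularity by one derivative. The additive constant is absorbed by passing to a moving frame (a Galilean-type normalisation), and the sign ambiguity of the kink will be fixed below.

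\textbf{Step 1 (lifting the data).} Given KdV data $u_0$ with $\HN{-1}{R_c-u_0}$ small, I would first construct mKdV data $w_0=\tilde R_c+v_0$ with $\miura(w_0)=u_0$ and $\HN{s+1}{v_0}\lesssim\HN{s}{R_c-u_0}$. Since $\miura(\tilde R_c+v)-\miura(\tilde R_c)=v_x+2\tilde R_c v+v^2$, the lift amounts to solving this Riccati-type equation, whose linearisation $L_c v:=v_x+2\tilde R_c v$ is a first-order operator invertible from $H^{s+1}$ onto $H^s$ on the complement of its kernel. A fixed-point argument then yields the nonlinear lift together with its Lipschitz dependence on the data, and the branch producing a small $v_0$ selects the correct (kink) preimage.

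\textbf{Step 2 (orbital stability of the kink).} This is the analytic heart. Writing the mKdV solution as $w(t)=\tilde R_{c}(\cdot-y(t))+v(t)$ after a modulation that fixes $y(t)$ by an orthogonality condition annihilating the translation mode $\partial_x\tilde R_c$, I would control $\HN{s+1}{v(t)}$ by conserved functionals. The smallness hypothesis being imposed only in $H^{-1}$ dictates the mechanism: at the $L^2$ level (the case $s=-1$, below the energy space) the classical convexity argument is unavailable, so I would use a conserved energy of Killip--Visan--Zhang type for mKdV, renormalised against $\tilde R_c$ so as to be finite on $\tilde R_c+L^2$, conserved, and coercive for $\HN{0}{v}$ modulo the translation direction; this keeps $v(t)$ in the small-$L^2$ regime for all time. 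With that qualitative closeness secured, the higher conservation laws of the mKdV hierarchy (reducing, for $s=0$, to the Weinstein/Grillakis--Shatah--Strauss Hessian on $H^1$) then propagate the full $\HN{s+1}{v(t)}$ bound from the initial $H^s$ size, yielding $\HN{s+1}{v(t)}\le\Gamma_s\!\left(c,\HN{s+1}{v_0}\right)$ with $\Gamma_s$ continuous, polynomial in its second argument, and vanishing at $0$. Theorem \ref{aprioriTHM} may be invoked here to guarantee global-in-time control of the ambient $H^{-1}$ norm so that the coercivity regime is never left.

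\textbf{Step 3 (pushing forward) and bookkeeping.} Since $\miura$ is locally Lipschitz from $H^{s+1}$ to $H^s$ near the bounded background $\tilde R_c$ and $\miura(\tilde R_c(\cdot-y))=R_c(\cdot-y)$, I conclude
\[
\HN{s}{u(t)-R_c(\cdot-y(t))}=\HN{s}{\miura(w(t))-\miura(\tilde R_c(\cdot-y(t)))}\lesssim\HN{s+1}{v(t)},
\]
with an implicit constant depending only on $c$ and the controlled size of $v$. Combining with Steps 1--2 produces the asserted $\gamma_s(c,\HN{s}{R_c-u_0})$, continuous in $c$, polynomial in the second variable, and vanishing there at $0$; the normalisation $c^{1/4}$ in the smallness assumption reflects $\HN{-1}{R_c}\sim c^{1/4}$ together with the scaling symmetry of KdV, and the continuity of $y$ is inherited from the modulation construction. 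The main obstacle I anticipate is precisely Step 2 at $s=-1$: building a conserved functional below the energy space that is simultaneously finite on $\tilde R_c+L^2$ and coercive for the $L^2$ perturbation modulo the one-dimensional translation mode. Here the classical variational machinery fails and one must lean on the completely integrable structure of mKdV, which is exactly where the $H^{-1}$ threshold for KdV is sharp.
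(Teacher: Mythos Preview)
Your overall strategy---transfer to mKdV via the Miura map, prove kink stability there, push forward---is the paper's strategy too. But two steps in your outline diverge from the paper, and the first is a genuine gap.

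\textbf{Step 1 (the lift).} Your linearisation $L_c v = v_x + 2\tilde R_c v = v_x - \sqrt c\,\tanh(\tfrac{\sqrt c}{2}x)\,v$ has \emph{trivial} kernel in $H^{s+1}$ (the homogeneous solution is $\cosh^2(\tfrac{\sqrt c}{2}x)$, which grows) and a one-dimensional \emph{cokernel} spanned by $\sech^2(\tfrac{\sqrt c}{2}x)$; it is Fredholm of index $-1$, not surjective. So a fixed-point argument ``on the complement of its kernel'' cannot produce a lift for generic $u_0$ close to $R_c$. The paper's remedy (Theorem~\ref{inverse}) is to let the scale of the kink vary: the map $F^*(r,\lambda)=r^2+2r\lambda\tanh(\lambda x)-r_x-2\lambda^2\sech^2(\lambda x)$ is an analytic diffeomorphism from $H^{s+1}\times(0,\infty)$ onto its range, the $\lambda$-derivative filling precisely the missing cokernel direction. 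Hence the lifted kink has scale $\lambda\approx\sqrt c/2$, not exactly $\sqrt c/2$, and one then rescales on the mKdV side to normalise $\lambda=1$.

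\textbf{Step 2 (kink stability in $L^2$).} Here your route differs from the paper's and is speculative. You propose a conserved functional of Killip--Visan--Zhang type, renormalised against the kink and coercive in $L^2$. The paper does \emph{not} use a conserved quantity at this step: Theorem~\ref{stab} is a monotonicity argument. One fixes $y(t)$ by an orthogonality condition, studies the weighted quantity $\int\eta(x-y)\,w^2\,dx$ with $\eta(x)=\tanh(\tfrac{x-R}{2})+1+\delta$, and shows it is non-increasing via the coercivity of the quadratic form $B$ in Proposition~\ref{quadform}. Your conserved functional is not constructed, and invoking Theorem~\ref{aprioriTHM} does not help since that controls $\HN{-1}{u}$, not closeness to the soliton---and is itself derived from Theorem~\ref{stab}. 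For higher norms the paper also proceeds differently from your sketch: rather than mKdV conservation laws, Corollary~\ref{highernorm} sends the mKdV perturbation $w$ forward through the Miura map (plus a Galilean shift) to a \emph{small} KdV solution $v$, applies the KdV hierarchy of conserved quantities to $v$ (Appendix~\ref{conserved}), and pulls the bound back via the estimate of Remark~\ref{wvest}.
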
 

\begin{rem}
If we rescale $c$ to $4$, we obtain a more precise result. The smallness assumption becomes 
\[ \HN {-1}{ 4c^{-1} u_0(2c^{-1/2}x) - R_4 } \le \varepsilon, \]
which is weaker and more natural than the assumption of the theorem. 
\end{rem}

\begin{thm}[Asymptotic stability of KdV solitons]\label{KdVAsymStab}
  Given real $\gamma>0$ and integer $s\geq -1$, there exists an
  $\varepsilon_{\gamma} >0$ such that if $u\in
  C([0,\infty);H^s(\RR)\cap H^{-3/4}(\RR))$, is a solution to
  (\ref{KdV}), satisfying 
\[ \HN{-1}{R_c-u_0}<\varepsilon_{\gamma}  c^{1/4} \]
for $c>0$, then there is a continuous function
  $y:[0,\infty)\to\RR$ and $\tilde c>0$ such that
\begin{equation*}  
 \lim_{t \to \infty} \norm{ u - R_{\tilde c}(x-y(t)) }_{H^{s}( (\gamma t , \infty) ) } =  0 
\end{equation*} 
for any $t\geq 0$.  Moreover we have the bound $\abs{c-\tilde  c}\lesssim  c^{\frac34} \HN{-1}{R_c-u_0}   $.
\end{thm}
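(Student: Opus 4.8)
The plan is to transport the question to the modified equation through the Miura map and to prove asymptotic stability there, in the spirit of Merle and Vega. Up to a Galilean normalisation the soliton $R_c$ is the Miura image $\miura(w)=w_x+w^2$ of the $\mKdV$ kink $\phi_c(x)=-\tfrac{\sqrt c}{2}\tanh\!\big(\tfrac{\sqrt c x}{2}\big)$; indeed a direct computation gives $\miura(\phi_c)=R_c+\tfrac c4$. Since $\miura$ carries $H^{s+1}(\RR)$ continuously into $H^{s}(\RR)$ (in particular $L^2$ into $H^{-1}$, using $\norm{f}_{H^{-1}}\lesssim\norm{f}_{L^1}$ in one dimension), a solution $u$ of \eqref{KdV} with $\HN{-1}{R_c-u_0}$ small lifts to a solution $w$ of mKdV whose data is $L^2$-close to $\phi_c$. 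First I would apply Theorem~\ref{KdVstab} to produce, for all $t\geq 0$, a modulation parameter $y(t)$ keeping $u(t,\cdot)-R_c(\cdot-y(t))$ uniformly small in $H^{s}$, and translate this into a uniform-in-time control of the mKdV remainder $w(t,\cdot)-\phi_c(\cdot-y(t))$ in $L^2$.

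The driving mechanism is an almost-monotonicity estimate for a localised mass. Fixing a smooth weight $\psi$ that vanishes far to the left and equals one far to the right, I would study the flux of the remainder mass across a front moving to the right at positive speed, chosen below the soliton speed $c$. Since $\gamma>0$ the left-moving dispersive radiation escapes the region $(\gamma t,\infty)$, while the soliton, travelling at speed $c$, eventually lies to the right of the front. The time derivative of this localised quantity splits into a principal term carrying a favourable sign from $\psi'\geq 0$ together with the dispersive structure, and nonlinear error terms; the uniform orbital smallness from the previous step is exactly what is needed to absorb the errors and force the localised mass to the right of the front to tend to zero.

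With the uniform bound and the monotonicity in hand, I would extract, along a sequence $t_n\to\infty$, a weak limit of the translated remainder and upgrade it to strong convergence on the relevant half-line; the monotonicity shows the limiting object carries no radiation, i.e. it is an almost-localised solution of mKdV. A rigidity (Liouville-type) theorem then identifies any such solution with a kink, so the limit must be $\phi_{\tilde c}$ for some $\tilde c$, and applying $\miura$ returns $R_{\tilde c}$. Pushing this through the map and undoing the lift yields $\norm{u-R_{\tilde c}(\cdot-y(t))}_{H^{s}((\gamma t,\infty))}\to 0$.

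It remains to pin down $\tilde c$ and to reach integer $s>0$. The value $\tilde c$ is fixed by the near-conservation of the relevant invariant of the lifted flow, and tracking the scaling $\int R_c^2\,dx\sim c^{3/2}$ converts the $H^{-1}$ defect of the data into the stated bound $\abs{c-\tilde c}\lesssim c^{3/4}\HN{-1}{R_c-u_0}$. To promote $L^2$ (i.e. $s=-1$) convergence to integer $s>0$, I would bootstrap using the conservation laws of the integrable hierarchy together with local smoothing, which propagate the decay to higher regularity on the half-line. I expect the rigidity theorem to be the principal obstacle: at the critical regularity ($L^2$ for mKdV, $H^{-1}$ for KdV) the usual energy coercivity is unavailable, and one must also ensure that the Miura map transfers both the smallness and the final convergence without loss despite the nonzero boundary values of the kink.
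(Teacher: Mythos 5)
Your first two steps --- lifting to mKdV near the kink via the (inverse) Miura map and running an almost-monotonicity argument for a localised mass behind a moving front --- are exactly the mechanism of the paper (Proposition \ref{stableftright2} with the weight $\phi_{x_0,A}$, fed by the virial estimate \eqref{virial} from Theorem \ref{stab}). The problem is your third paragraph. If, as you claim at the end of the second paragraph, the monotonicity forces the localised mass to the right of the front to tend to zero, you are already done in $L^2$: one chooses the initial front position $x_0$ so far to the right that the initial localised mass $\int \eta\,\phi_{A,x_0}(0,\cdot)w(0,\cdot)^2$ is below any prescribed $\epsilon$, and monotonicity propagates this for all time; no compactness, no limiting object, no classification of the limit is needed. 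If instead you genuinely need the weak-limit extraction and the Liouville-type rigidity theorem, then you have a real gap: no such rigidity statement at $L^2$ regularity for mKdV near the kink is proved here or invoked from elsewhere, and you yourself identify it as the principal obstacle without supplying it. The paper deliberately avoids this route --- the decay is an explicit quantitative estimate, and the upgrade to integer $s>0$ is done in Corollary \ref{mKdVAsymStab} by pushing the polynomial conservation laws of Appendix \ref{conserved} through the same moving weight, not by a rigidity argument.

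Two further points. First, the forward continuity of $\miura:H^{s+1}\to H^s$ does not by itself produce the lift $w$; the inversion is the content of Section \ref{invmiurasec} (Theorem \ref{inverse}), which requires the Galilean shift and the spectral condition on $H_{u_0}$ --- you may lean on Theorem \ref{KdVstab} for this, but the sentence as written is a non sequitur. Second, your identification of $\tilde c$ via ``near-conservation of the relevant invariant'' and the scaling $\int R_c^2\sim c^{3/2}$ points at the $L^2$ mass, which would make $\tilde c$ only approximately determined and a priori time-dependent. In the paper $\tilde c=4\lambda^2$ is fixed once and for all by the ground state energy $-\lambda^2$ of the Schr\"odinger operator attached to $u_0$ (the $\lambda$ selected by $F^*$ in Theorem \ref{inverse}), which is exactly conserved; the bound $\abs{c-\tilde c}\lesssim c^{3/4}\HN{-1}{R_c-u_0}$ then comes from the Lipschitz dependence of the ground state energy on the potential, and the time-independence of $\tilde c$ is the point emphasised in the introduction: the kink's scale is its value at infinity, which $L^2$ perturbations cannot change.
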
 
The decay follows from an explicit quantitative estimate in
Proposition \ref{stableftright2} for $H^{-1}$, and similar estimates for higher 
norms in   Corollary \ref{mKdVAsymStab} .  The estimates we obtain are
sufficiently strong to obtain existence of weak solutions by a
standard approximation and compactness argument.

\begin{thm}[Existence of global $H^{-1}$ weak solutions to KdV IVP]\label{mainTheorem}
For any $u_0\in H^{-1}$, there exists a weak solution $u$ to
(\ref{KdV}) satisfying
\begin{align}
u\in C_{\omega}([0,\infty); H^{-1}(\RR)), \footnotemark\\
u\in L^2([0,T]\times[-R,R]) ~~\text{for any}~~ R,T<\infty,\\
u(t,\cdot) \to u_0 \qquad \text{ in } H^{-1} ~~\text{as}~~t\downarrow 0.
\end{align}\footnotetext{Here $C_{\omega}([0,\infty); H^{-1}(\RR))$ denotes the space of weakly continuous functions from $\RR$ to $H^{-1}$.}
Furthermore $u$ satisfies the bounds given in Theorem \ref{aprioriTHM}.
\end{thm}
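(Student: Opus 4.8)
The plan is to realise the weak solution as a limit of smooth solutions launched from regularised data, with all the compactness supplied by the a priori estimate of Theorem~\ref{aprioriTHM} together with a local smoothing bound obtained through the Miura transformation $\miura$. First I would pick Schwartz data $u_0^{(n)}\to u_0$ in $H^{-1}$; for such smooth, rapidly decaying data the equation \eqref{KdV} has global classical solutions $u^{(n)}\in C([0,\infty);H^s(\RR))$ for every $s$, to which Theorem~\ref{aprioriTHM} applies. Everything hinges on estimates for the $u^{(n)}$ that are uniform in $n$.

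Two such bounds are needed. Theorem~\ref{aprioriTHM} and $\sup_n \HN{-1}{u_0^{(n)}}<\infty$ give $\sup_n\sup_{t\ge0}\HN{-1}{u^{(n)}(t)}\le C(\HN{-1}{u_0})$, so $(u^{(n)})$ is bounded in $L^\infty([0,\infty);H^{-1})$. This is not enough on its own, since the nonlinearity $6uu_x=3(u^2)_x$ cannot be evaluated on a generic $H^{-1}$ distribution. The decisive second bound is a local smoothing estimate: writing $u^{(n)}=\miura(v^{(n)})$ where $v^{(n)}$ is the corresponding $L^2$ solution of mKdV, the conservation of $\LPN{2}{v^{(n)}}$ and the Kato smoothing for mKdV (the same mechanism that underlies Proposition~\ref{stableftright2} and Corollary~\ref{mKdVAsymStab}) yield $\norm{u^{(n)}}_{L^2([0,T]\times[-R,R])}\le C(R,T,\HN{-1}{u_0})$ uniformly in $n$. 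In particular $(u^{(n)})^2$ is bounded in $L^1_{loc}$, so the nonlinear term is meaningful in the limit.

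With these bounds I would extract a limit by compactness, most cleanly at the level of the mKdV variable: the smoothing bound controls $v^{(n)}$ in $L^2_t H^{1/2}_x$ on compact sets, and the equation controls $\partial_t v^{(n)}$ in a space of negative regularity, so an Aubin--Lions type lemma produces a subsequence with $v^{(n)}\to v$ \emph{strongly} in $L^2_{loc}$ and $u^{(n)}\to u:=\miura(v)$ weakly-$\ast$ in $L^\infty_t H^{-1}$. The strong $L^2_{loc}$ convergence is exactly what lets the quadratic terms pass to the limit. I would then verify the weak formulation $\int_0^\infty\int_{\RR}\bigl(u^{(n)}(\phi_t+\phi_{xxx})-3(u^{(n)})^2\phi_x\bigr)\,dx\,dt=0$ for $\phi\in\testf((0,\infty)\times\RR)$: the linear terms pass by weak convergence, while $(u^{(n)})^2\to u^2$ in $L^1_{loc}$ by the strong convergence. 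This identifies $u$ as a weak solution with $u\in L^2([0,T]\times[-R,R])$, and weak lower semicontinuity transmits the uniform $H^{-1}$ bound, giving the estimate of Theorem~\ref{aprioriTHM}.

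It remains to pin down the time regularity, where the real difficulty lies. Weak continuity $u\in C_{\omega}([0,\infty);H^{-1})$ follows by testing against a countable dense family: the maps $t\mapsto\ip{u^{(n)}(t),\psi}$ are uniformly bounded and equicontinuous---their $t$-derivatives being controlled by the two uniform bounds---so Arzel\`a--Ascoli and density give continuity of $t\mapsto\ip{u(t),\psi}$ for all $\psi\in H^1$, which with the uniform $H^{-1}$ bound is weak continuity. The delicate point is strong convergence $u(t)\to u_0$ in $H^{-1}$ as $t\downarrow0$, since compactness only supplies $u(t)\rightharpoonup u_0$. To upgrade to norm convergence I would use that, through $\miura$, the conserved quantity $\LPN{2}{v(t)}^2$ controls the leading part of $\HN{-1}{u(t)}^2$; its conservation forces $\limsup_{t\downarrow0}\HN{-1}{u(t)}\le\HN{-1}{u_0}$, which combined with the weak convergence gives strong convergence. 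I expect this last step---certifying that no $H^{-1}$ mass is lost in passing to the limit, and reconciling the conserved-quantity identity with the merely weak convergence produced by compactness---to be the most technical part of the argument.
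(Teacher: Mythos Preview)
Your outline contains a real gap at the point where you write $u^{(n)}=\miura(v^{(n)})$ with $v^{(n)}$ an $L^2$ solution of mKdV. The Miura map $\miura:L^2\to H^{-1}$ is \emph{not} surjective: by Proposition~\ref{spectrum} its range consists precisely of those potentials whose Schr\"odinger operator $-\partial_{xx}+u_0$ is positive semi-definite. A generic Schwartz approximant $u_0^{(n)}$ (think of any negative bump) will fall outside this range, so no such $v^{(n)}\in L^2$ exists. Without that representation you lose both the uniform local smoothing you want and the conserved quantity you invoke for strong continuity at $t=0$.

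The paper's remedy---and this is its central new idea, cf.\ Section~\ref{invmiurasec}---is to first apply the Galilean shift $u_0\mapsto u_0+\lambda^2$, which pushes the spectrum of $H_{u_0}$ above $-\lambda^2$, and then invert the Miura map not on $L^2$ but on $\lambda\tanh(\lambda\cdot)+L^2$ (Theorem~\ref{inverse}). One thus lands in the mKdV problem \emph{near a kink}, equation~\eqref{KINKmKdV}, where the $L^2$ norm of the perturbation is \emph{not} conserved (Lemma~\ref{localbd} gives only $\LPN{2}{v(t)}\lesssim\LPN{2}{v_0}+t^{1/2}$). The compactness argument (Proposition~\ref{weakKINKmKdV}) is run entirely on the mKdV side, approximating the kink perturbation $v_0$ by $H^1$ data; the Kato smoothing \eqref{KATOSMOOTH} comes from \eqref{lastest}, not from the flat mKdV. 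Strong continuity at $t=0$ is then obtained from the monotone quantity $\int(1+\tfrac{1}{10}Q)v^2\,dx$ of \eqref{equality} rather than from exact $L^2$ conservation. Finally one maps back via $\miura$ and Galilei (Proposition~\ref{lastprop}, Lemma~\ref{distkdvlem}) and uses scaling to remove the smallness assumption. Your sketch has the right overall shape---approximate, extract by compactness on the mKdV side, pass the nonlinearity by strong $L^2_{loc}$ convergence---but it will not close until you insert the Galilean shift and work with the kink background rather than with $L^2$ mKdV data.
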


A closely related problem to the initial value problem of the
Korteweg-de Vries equation is that of the \emph{modified} Korteweg--de
Vries (mKdV) equation:
\begin{equation}\label{mKdV}
\left\{\begin{array}{l}
u_t + u_{xxx} - 6 u^2 u_x=0\\
u(0,x) = u_0(x)
\end{array}\right.,
\end{equation}
for $x\in \RR$ and initial data $u_0$.  

An explicit family of solutions of the mKdV equation, called \emph{kink}
solutions, can be written up to translations as
\[Q_\lambda(t,x):=\lambda\tanh\left(\lambda x+2\lambda^3 t\right),\]
for any $\lambda>0$.

The mKdV problem and the KdV may be connected via a differential transformation known as the \emph{Miura map}:
\begin{equation}
u\mapsto u_x+u^2;
\end{equation}
which sends solutions of (\ref{mKdV}) to solutions of (\ref{KdV}).  To see this property formally, set
\begin{align*}
\KdV(u)&=u_t + u_{xxx} - 6 u u_x,&\\
\mKdV(u)&=u_t + u_{xxx} - 6 u^2 u_x,&~\text{and}\\
\miura(u)&=u_x+u^2.&
\end{align*}
One can then easily check that
\begin{equation}\label{miuraid}
\KdV(\miura(u))=(\mKdV(u))_x+2u\cdot \mKdV(u),
\end{equation}
from which it follows that $\KdV(\miura(u))=0$ whenever $\mKdV(u)=0$.  Additionally, note the mKdV equation satisfies the reflection symmetry: if $u$ is a solution to \eqref{mKdV}, then $-u$ is also a solution. Hence if we define
\begin{equation*}
\miura^*(u):=\miura(-u)=-u_x+u^2,
\end{equation*}
then $\miura^*(u)$ maps solutions to the mKdV equation to solutions to the KdV equation.

The Korteweg--de Vries equation is invariant under the \emph{Galilean
  transformation}:
\begin{equation}
u(t,x)\mapsto u(t,x-ht) - \frac{h}{6},
\end{equation}
for $h\in \RR$, i.e. if $u$ is a solution to the KdV equation, then
its image under the above transformation is also a solution, which is
easily verified.

The Korteweg--de Vries equation also satisfies the following scaling symmetry:
\begin{equation}\label{scaling}
u(t,x)\mapsto\frac{1}{\lambda^2}u(\frac{t}{\lambda^3},\frac{x}{\lambda}), 
\end{equation}
for $\lambda>0$ and $\dot H^{-\frac32}$ is the critical space. 

In Section \ref{invmiurasec} we will show how to use the Miura map
combined with the Galilean symmetry to relate mKdV solutions near a
kink solutions to either KdV solutions near $0$, or to KdV solutions
near a soliton.  This will afford us the freedom to choose the most
convenient setting in order to prove the stated results.  The $H^{-1}$
a priori estimate of Theorem \ref{aprioriTHM} will then follow as a
consequence of the $L^2$ stability of the kink (Theorem \ref{stab}).
The orbital (Theorem \ref{KdVstab}) and asymptotic stability (Theorem
\ref{KdVAsymStab}) of the soliton in the $H^{-1}$ norm will follow
from the corresponding statement for the mKdV kink in $L^2$ (Theorem
\ref{stab} and Theorem \ref{mKdVAsymStabL2}).  Higher conserved
energies imply stability of the trivial solution in $H^s$ for
nonnegative integers $s$, and Kato's local smoothing argument along a
moving frame implies asymptotic stability of the trivial solution to
the right. We use the Miura map to derive 
 orbital and asymptotic stability 
of the soliton for KdV, as well as orbital and asymptotic  stability of the kink for mKdV in higher norms, requiring  smallness of the deviation only in $H^{-1}$, 
see Corollary \ref{highernorm} and the proof of Corollary
\ref{mKdVAsymStab}.

The Miura map has been used in a simpler setting by Kappeler et al.\
\cite{MR2189502}. Their results are limited by the fact that the Miura
map is not invertible. Our additional ingredient is the shift of the
initial data using the Galilean invariance.  To the best of our knowledge
the corresponding results on the Miura map are new, and we believe
them to be appealing and of independent interest.

Of course this is intimately related to the integrable structure of
KdV and mKdV, and also the Lax-Pair is clearly in the
background. Nevertheless we do not explicitly use the integrable
machinery, and the use of elementary key elements of the theory of
integrable systems in combination with a PDE oriented approach seems
to be new and promising.

It is worthwhile to point out that unlike the corresponding asymptotic
stability results for generalised KdV, the scale $\tilde c$ is
independent of time. This holds since the scale of the kink is related
to its size at infinity, and this does not change by adding $L^2$
perturbations.

\section{Inverting the Miura map}\label{invmiurasec}

Kappeler et al.\ showed in the paper \cite{MR2189502} that if the
initial data $u_0\in H^{-1}$ is contained in the image of $L^2$ under
the Miura map restricted to $L^2$, then there exists a global weak
solution to the IVP \eqref{KdV}. The proof consists of constructing a
weak solution to mKdV corresponding to initial data in the preimage
under the Miura map of the original initial data, and then
transforming the solution back, under the Miura map, to a solution to
KdV.  The following proposition is one of the key tools used by
Kappeler et al.\ to characterise the range of the Miura map.

\begin{prop}\cite{MR2189502}\label{spectrum}
Let $ u_0 \in H^{-1}_{\text{loc}}$. The following three statements are equivalent. 
\begin{enumerate}
\item The Schrödinger operator $H_{u_0}:=-\partial_{xx}+ u_0$ is positive
  semi-definite.
\item There exists a strictly positive function $\phi $ with $-\phi_{xx} + u_0 \phi=0$.
\item  $u_0\in H^{-1}_{\text{loc}}$ is in the range of the Miura map on $L^2_{\text{loc}}$.  
\end{enumerate}
\end{prop}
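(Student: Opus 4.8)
The plan is to run a short cycle of implications, extracting the algebra from the Miura map and the Cole–Hopf substitution and isolating the one genuinely analytic step, $(1)\Rightarrow(2)$, which is an oscillation-theoretic existence statement. First I would fix the correct low-regularity framework for the eigenvalue equation. Writing $u_0 = U_x$ with $U\in L^2_{\text{loc}}$ (a local primitive, available since $u_0\in H^{-1}_{\text{loc}}$), I would recast $-\phi_{xx}+u_0\phi=0$ as the first-order system for $(\phi,\pi)$ with $\pi:=\phi_x-U\phi$,
\begin{equation*}
\phi_x = \pi + U\phi, \qquad \pi_x = -U\pi - U^2\phi,
\end{equation*}
whose coefficient matrix lies in $L^1_{\text{loc}}$. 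Carathéodory theory then yields existence and uniqueness of absolutely continuous solutions for prescribed Cauchy data $(\phi(x_0),\pi(x_0))$; in particular every nontrivial weak solution $\phi\in H^1_{\text{loc}}$ is continuous with non-degenerate zeros (at a zero of $\phi$ one has $\pi\neq 0$, else $\phi\equiv 0$). This is the substitute for the classical ODE theory that I would use throughout.

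Next the elementary implications. For $(3)\Leftrightarrow(2)$ I would use Cole–Hopf: if $u_0=\miura(v)$ with $v\in L^2_{\text{loc}}$, then $\phi:=\exp\!\left(\int_0^x v\right)$ is continuous, strictly positive, lies in $H^1_{\text{loc}}$, and satisfies $\phi_{xx}=(v_x+v^2)\phi=u_0\phi$; conversely, from a strictly positive solution $\phi$ as in $(2)$ the logarithmic derivative $v:=\phi_x/\phi\in L^2_{\text{loc}}$ satisfies $\miura(v)=\phi_{xx}/\phi=u_0$. For $(2)\Rightarrow(1)$ I would use the ground-state (Jacobi) substitution: writing $\psi\in\testf$ as $\psi=\phi\eta$ and integrating by parts against the equation gives
\begin{equation*}
\int \abs{\psi_x}^2 + \ip{u_0,\psi^2} = \int \phi^2\,\abs{(\psi/\phi)_x}^2 \geq 0,
\end{equation*}
so $H_{u_0}$ is positive semi-definite.

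The crux is $(1)\Rightarrow(2)$: producing a globally strictly positive solution from positivity of the quadratic form. I would argue via disconjugacy. First, positivity forces every nontrivial solution to have at most one zero: if $\phi$ vanished at $a<b$, the truncation $\phi\mathbf{1}_{[a,b]}$ would be an $H^1$ competitor of vanishing energy, hence a minimizer of the nonnegative form and therefore itself a global solution — impossible, since in the system variables its momentum $\pi$ would jump at $a$, whereas genuine solutions have continuous $\pi$. Granting disconjugacy, I would build the positive solution by exhaustion: on each $[-n,n]$ solve the boundary value problem with a fixed value at one endpoint and a vanishing value at the other, use disconjugacy to keep these solutions one-signed, and pass to the limit $n\to\infty$ using monotonicity of the admissible Cauchy data (the principal-solution construction) together with Carathéodory continuous dependence, obtaining one-sided positive solutions at $\pm\infty$ that can be matched into a single $\phi>0$ on all of $\RR$.

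The main obstacle is precisely this last construction: transplanting Sturm–Hartman oscillation theory — disconjugacy, comparison, and existence of principal solutions — to the $H^{-1}_{\text{loc}}$ setting. The reduction to an $L^1_{\text{loc}}$-coefficient system is what makes this feasible, since it restores continuity of solutions and a genuine uniqueness theorem; the residual care lies in the compactness and monotonicity arguments needed to descend from the one-signed solutions on bounded intervals to a globally positive one, and in checking that the limiting object neither vanishes identically nor merely touches zero, which again follows from the non-degeneracy of zeros guaranteed by the system formulation.
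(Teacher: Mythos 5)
The paper does not actually prove Proposition \ref{spectrum}: it is quoted from Kappeler--Perry--Shubin--Topalov \cite{MR2189502}, and only fragments of the surrounding machinery (the factorisation $H_q=-(\partial_x+r)(\partial_x-r)$, Lemma \ref{maximum}, Lemma \ref{asymp}) appear in Appendix \ref{appenSchr}. Your reconstruction is correct and follows essentially the route of the cited source: the quasi-derivative system for $(\phi,\pi)$ with $\pi=\phi_x-U\phi$ is exactly the right substitute for classical ODE theory at $H^{-1}_{\text{loc}}$ regularity; the Riccati/Cole--Hopf correspondence gives $(2)\Leftrightarrow(3)$; the ground-state substitution gives $(2)\Rightarrow(1)$; and the truncation argument correctly derives disconjugacy from positivity of the form, since a zero-energy competitor of a nonnegative form is a global weak solution vanishing on a half-line and hence zero by Carath\'eodory uniqueness. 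One small correction to the final step: no ``matching'' of one-sided positive solutions is needed, and indeed two solutions positive near $+\infty$ and near $-\infty$ respectively are in general linearly independent and cannot be glued. The principal-solution construction you describe already does the whole job: the solution $\phi_n$ vanishing at $n$ and normalised to $1$ at $0$ is, by disconjugacy, positive on all of $(-\infty,n)$, so its limit (which exists by monotonicity and boundedness of $\pi_n(0)$ together with continuous dependence on the data) is nonnegative on all of $\RR$, nontrivial, and therefore strictly positive by the non-degeneracy of zeros. With that phrasing fixed, the argument is complete modulo the standard Sturm-theoretic details you explicitly flag.
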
 

In order to remove the restrictions on the initial data imposed in \cite{MR2189502}, we will employ the use of
the Galilean transformation in order to transform KdV into the range of the Miura map.    This allows us to
link rough $H^{-1}$ KdV initial data to mKdV initial data.  The
corresponding mKdV initial data will be in the form of a sum of an
$L^2$ function and a $\tanh$ kink.  The authors would like to note
that the original idea to use such an argument  was somewhat motivated by
the papers \cite{MR1949297} and \cite{TZV} -- related to the $L^2$
stability of soliton solutions to the KdV equation and KP-II equation
respectively.

Appendix \ref{appenSchr} contains results for  Schrödinger operators with $H^{-1}$ potentials, which we will use below.  

Our aim now is to
construct an ``inverse'' of the Miura map for Galilean transformed
initial data.  The Galilean transformation essentially adds a
constant to the potential of the Schrödinger operator corresponding to
the initial data. We easily achieve positive definiteness by adding a
large enough constant; the caveat being that the initial data will no
longer remain in $H^{-1}$.

Given initial data $u_0\in H^{-1}$, applying the Galilean symmetry to
$u_0$, with $h$ set to $-6\lambda^2$, for some $\lambda>0$ and $t=0$, yields the function $u_0+\lambda^2$.  Now
consider the problem of finding a function $r\in L^2$ that is in the
preimage of $u_0+\lambda^2$ under the Miura transformation.  Observe that
$\miura( \lambda\tanh (\lambda \cdot))=\lambda^2$; it then seems natural to consider
the problem
\begin{equation}\label{prob1}
(r+\lambda \tanh \lambda x)_x+(r +\lambda\tanh \lambda x)^2=u_0+\lambda^2.
\end{equation}

For the problem of stability of solitons, we assume we are given some initial data $u_0\in H^{s}$, where $s\geq -\frac{3}{4}$.  Applying the Galilean
transform with $h$ as above, and   noting that $\miura^*(\lambda
\tanh(\lambda\cdot))=\lambda^2-2\lambda^2\sech^2(\lambda\cdot)$,
we are led to consider the problem
\begin{equation}\label{prob2}
-(r+\lambda \tanh (\lambda \cdot))_x+(r +\lambda\tanh (\lambda \cdot))^2= u_0+\lambda^2.
\end{equation}

We now state sufficient and necessary conditions for the problems \eqref{prob1}) and \eqref{prob2} to have a solution.

\begin{prop}\label{groundstates} Let $\lambda>0$. The  
 ground state energy  of $H_{u_0}$, $u_0 \in H^{-1}$ 
is $-\lambda^2$ if and only if there exists $r \in L^2-\lambda\tanh(\lambda \cdot)$ such that 
\[   \miura(r) = u_0+\lambda^2. \]
The spectrum of $H_{u_0}$ is contained in $(-\lambda^2,\infty)$ if and only 
if there  exists $r \in L^2 + \lambda\tanh(\lambda \cdot)$ with 
\[ \miura(r) =u_0+\lambda^2. \]
\end{prop}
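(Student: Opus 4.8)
The plan is to connect the Miura map equation to the Schrödinger operator $H_{u_0}$ via the substitution that linearizes the Riccati-type equation $\miura(r) = u_0 + \lambda^2$. I need to prove a two-directional equivalence for each of the two claims (ground state energy exactly $-\lambda^2$, versus spectrum contained in $(-\lambda^2,\infty)$), so the proof naturally splits into a ``construction'' direction and a ``spectral reading'' direction.

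\begin{proof}[Proof sketch]
First I would rewrite everything in terms of the shifted potential. Write $v := u_0 + \lambda^2$, so that $H_{u_0} + \lambda^2 = -\partial_{xx} + v = H_v$, and the condition ``ground state energy of $H_{u_0}$ is $-\lambda^2$'' becomes ``$H_v$ is positive semi-definite with $0$ in its spectrum'', while ``spectrum of $H_{u_0}$ contained in $(-\lambda^2,\infty)$'' becomes ``$H_v$ is strictly positive (bounded below away from $0$, or at least has no kernel/threshold eigenvalue)''. Now the equation $\miura(r) = r_x + r^2 = v$ is a Riccati equation, and I would linearize it by the standard substitution $r = \phi_x/\phi$ (or $r = -\psi_x/\psi$ for the $\miura$ versus $\miura^*$ sign), which turns $r_x + r^2 = v$ into $-\phi_{xx} + v\phi = 0$, i.e. $H_v \phi = 0$. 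So solvability of the Miura equation is equivalent to the existence of a suitable \emph{zero-energy} solution $\phi$ of $H_v$, and by Proposition \ref{spectrum} (the equivalence of positive semi-definiteness with existence of a strictly positive $\phi$ annihilated by $H_v$) such a $\phi>0$ exists precisely when $H_v \ge 0$.

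The substantive work is then translating the asymptotic/integrability constraint $r \in L^2 \mp \lambda\tanh(\lambda\,\cdot)$ into the correct spectral location of $0$ relative to the spectrum of $H_v$. The key is the asymptotic behaviour of the zero-energy solution $\phi$. Since $v = u_0 + \lambda^2 \to \lambda^2$ at spatial infinity (as $u_0 \in H^{-1}$ decays), the equation $-\phi_{xx} + v\phi = 0$ behaves asymptotically like $-\phi_{xx} + \lambda^2 \phi = 0$, whose solutions are combinations of $e^{\pm \lambda x}$. Thus $\phi$ generically grows or decays like $e^{\pm\lambda x}$ at each end, and correspondingly $r = \phi_x/\phi \to \pm\lambda$. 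The content of the proposition is exactly the dichotomy in these exponential rates: a zero-energy solution that is the genuine \emph{ground state} (an $L^2$ eigenfunction, so $0$ is the bottom eigenvalue) decays at \emph{both} ends, giving $r \to +\lambda$ as $x\to+\infty$ and $r\to-\lambda$ as $x\to-\infty$, matching $r + \lambda\tanh(\lambda\,\cdot) \in L^2$, i.e. $r \in L^2 - \lambda\tanh(\lambda\,\cdot)$; whereas a zero-energy solution corresponding to $0$ lying at or below the threshold of the continuous spectrum with $H_v$ strictly positive is the minimal-growth (Jost-type) solution that decays at one end and grows at the other, producing the opposite matching $r \in L^2 + \lambda\tanh(\lambda\,\cdot)$. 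I would make this rigorous by invoking the Schrödinger-operator results promised in Appendix \ref{appenSchr}, which should supply the existence and asymptotics of these distinguished zero-energy solutions for $H^{-1}$ potentials, together with the fact that a strictly positive $\phi$ solving $H_v\phi = 0$ is unique up to scalar multiple on each half-line.

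The main obstacle I anticipate is precisely the low regularity: with $u_0 \in H^{-1}$ the potential $v$ is only a distribution, so $\phi_{xx} = v\phi$ must be interpreted in a weak/form sense, and the substitution $r = \phi_x/\phi$ requires care to show that $r$ is a well-defined $L^2_{\text{loc}}$ function and that $\phi$ does not vanish (here positivity of $\phi$, from Proposition \ref{spectrum}, is essential to divide by $\phi$). Establishing the sharp exponential asymptotics of $\phi$ — and hence the global $L^2$ membership of $r \pm \lambda\tanh(\lambda\,\cdot)$ rather than merely local integrability — at this regularity is the delicate step, and this is exactly where the appendix results on Schrödinger operators with $H^{-1}$ potentials are needed. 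Once the asymptotics and the non-vanishing of $\phi$ are in hand, both directions of both equivalences follow by reading off the spectral picture from the number of decaying ends of $\phi$, together with the standard fact that $H_v$ has a zero eigenfunction in $L^2$ exactly when $0$ is the ground state energy.
\end{proof}
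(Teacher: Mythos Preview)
Your overall strategy --- linearize the Riccati equation via $r=\phi_x/\phi$, invoke Proposition \ref{spectrum}, and read off the $L^2\pm\lambda\tanh(\lambda\cdot)$ condition from the end-behaviour of $\phi$ using the appendix results --- matches the paper's, and your treatment of the ground-state case is essentially the paper's argument.

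There is, however, a genuine error in your handling of the second equivalence. You assert that the relevant zero-energy solution is a Jost-type $\phi$ that ``decays at one end and grows at the other''. But for such a $\phi$ the logarithmic derivative $r=\phi_x/\phi$ tends to the \emph{same} limit (either $+\lambda$ or $-\lambda$) at both ends, and then neither $r-\lambda\tanh(\lambda\cdot)$ nor $r+\lambda\tanh(\lambda\cdot)$ lies in $L^2$. The condition $r-\lambda\tanh(\lambda\cdot)\in L^2$ forces $r\to+\lambda$ as $x\to+\infty$ and $r\to-\lambda$ as $x\to-\infty$, which means $\phi$ must grow exponentially at \emph{both} ends. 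The positive solution furnished by Proposition \ref{spectrum} need not have this property, and the paper has to manufacture one: starting from any positive solution $\phi$, it forms
\[
\tilde\phi(x)=C\phi(x)+\phi(x)\int_0^x \phi^{-2}(s)\,ds
\]
for large $C$, which is a second (still positive) solution of $(H_{u_0}+\lambda^2)\tilde\phi=0$ that grows at both ends; one then takes $r=(\ln\tilde\phi)_x$.

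A second gap concerns the backward direction of the second equivalence. Given $r\in L^2+\lambda\tanh(\lambda\cdot)$, Proposition \ref{spectrum} only yields $\sigma(H_{u_0})\subset[-\lambda^2,\infty)$; you still have to exclude $-\lambda^2$ as an eigenvalue. Your sketch (``read off the spectral picture from the number of decaying ends'') does not do this. The paper argues by contradiction via a maximum principle (Lemma \ref{maximum}): if there were a positive $L^2$ ground state $\psi$ at energy $-\lambda^2$, then with $\phi=e^{\int_0^x r}$ growing at both ends the ratio $\phi/\psi$ would have an interior minimum, which Lemma \ref{maximum} forbids.
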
 

\begin{proof} 
Let $\phi$ be the ground state. Observe then $r= \frac{d}{dx} \ln \phi$ satisfies the Ricatti equation (see Appendix \ref{appenSchr}) 
\[ r_x+r^2 = u_0 +\lambda^2. \]
Then as a consequence of Lemma \ref{asymp} we have either
\[ r-\lambda  \in L^2(0,\infty) \quad\text{or}\quad r+\lambda \in L^2(0,\infty). \]
Note however the property that
\[  e^{\int_0^{x} r } \in L^2 \]
enforces $r+\lambda  \in L^2(0,\infty)$. Similarly, we obtain $r-\lambda \in L^2(-\infty,0)$ and thus 
\[ r + \lambda \tanh(\lambda x) \in L^2. \]
Hence $u_0+\lambda^2$ is in the range of the Miura map on $ - \lambda \tanh(\lambda x)+ L^2$ if the ground state energy is $-\lambda^2$. 

Now assume that 
\[  \lambda^2+  u_0 = r_x + r^2 \quad \text{and}\quad   r+\lambda \tanh(\lambda x) \in L^2.   \]
Then $ \phi = e^{\int_0^{x} r}$ is a strictly positive function in $H^1$ satisfying
\[(H_{u_0}+\lambda^2)\phi=0,\]
i.e. $\phi$ is the ground state with ground state energy $-\lambda^2$.

Now we turn to the case when the spectrum is contained in $(-\lambda^2,\infty)$. Since $H_{u_0+\lambda^2}$ is positive semi-definite, by Proposition \ref{spectrum}, there exists strictly positive $\phi\in L^2_{\text{loc}}$ satisfying
\begin{equation}\label{abc}
(H_{u_0} + \lambda^2)\phi=0.
\end{equation} 
Note that $\phi\notin L^2$, otherwise $\phi$ would be the ground state.  Since $\frac{d}{dx} \ln \phi$ solves the Ricatti equation, it follows by Lemma \ref{asymp} that either $\phi$ grows exponentially as $x\to \infty$ or as $x\to -\infty$. 

We aim to construct a solution $\tilde \phi$ to \eqref{abc} satisfying
\begin{equation}\label{expbds}
\tilde \phi(x) \to  \infty  \text{ as } x \to \pm \infty. 
\end{equation} 

Suppose $\phi$ is not such a solution; then without loss of generality we can assume 
\[ \phi(x) \to \infty \qquad \text{ as } x \to \infty, \]
and
\[ \phi(x) \to 0 \qquad \text{ as } x \to -\infty. \]

 We obtain using Lemma \ref{asymp}  that 
$\frac{d}{dx} \ln \phi - \lambda \in L^2$.   It it is then not difficult to show that
\begin{equation}
\tilde \phi(x)=C\phi(x)+ \phi(x)\int_0^x \phi^{-2}(s) ds
\end{equation}
for large $C>0$, is a solution to \eqref{abc}, satisfying the growth conditions \eqref{expbds}. 

We now define
   $r= \frac{d}{dx} \ln  \tilde\phi$.  It  satisfies 
the Ricatti equation; moreover, by Lemma \ref{asymp},  
\[ r-\lambda \tanh(\lambda x) \in L^2. \] 
Thus $u_0+\lambda^2$ is in the range of the Miura map restricted to $L^2+\lambda \tanh(\lambda x)$ if the
spectrum of $u_0$ is contained in $(-\lambda^2, \infty)$.

Now suppose that $u_0+\lambda^2 = r_x + r^2$ for $r - \lambda \tanh (\lambda x) \in L^2$; hence $\phi = e^{\int_0^x r }$
satisfies the equation
\[ -\phi'' + u_0 \phi = -\lambda^2 \phi, \]
and 
\[ \phi(x) \to \infty \text{ as } x \to \pm \infty. \]
Observe that Proposition \ref{spectrum} implies that the Schrödinger
operator $H_{u_0}$ has spectrum contained in $[-\lambda^2, \infty)$. We
  want to show that $-\lambda^2$ is not an eigenvalue. If it were,
  then there would be a non-negative, strictly positive $L^2$ ground
  state $\psi$. This is not possible since $\phi/\psi$ cannot attain a
  minimum (see Lemma \ref{maximum}). Therefore the spectrum is
  contained in $(-\lambda^2, \infty)$.
\end{proof} 

We now turn to the problem of 
relating the two sides of \eqref{prob1} and \eqref{prob2} 
by analytic diffeomorphisms.  We begin with a technical statement.

\begin{lem}\label{techlem} 
The multiplication map $(u, v)\to uv$ can be extended from the
bilinear map $C_0^\infty\times C_0^\infty\to C_0^\infty$ to continuous
bilinear maps
\begin{align*}
&L^2\times L^2 \to L^1\subset H^{s},&\text{ for any }
  s<-\frac12\\ &L^2\times H^{s^{\prime}} \to H^{s},& ~-\frac12 <s\le
  0,~s^{\prime}>\frac12+s\\ &H^{s_1}\times H^{s_2} \to H^{s_1},&\text{
    for any } s_2>\frac12,~ 0\le s_1, \le s_2.
\end{align*}
\end{lem}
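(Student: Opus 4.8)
The plan is to establish each of the three multiplication bounds by working in frequency space and estimating the product of two functions via their Fourier transforms, using that multiplication in physical space corresponds to convolution in frequency space. The core tool will be the elementary inequality governing how Sobolev weights interact under convolution, namely that for the appropriate range of exponents one has $\langle\xi\rangle^{s} \lesssim \langle\eta\rangle^{|s|}\langle\xi-\eta\rangle^{|s|}$ type bounds, together with Schur's test or Young's inequality to control the resulting convolution operator. Before doing any frequency-space work I would record the density statement: since $C_0^\infty$ is dense in each $L^2$ and $H^{s'}$ space appearing as a domain, it suffices to prove the stated inequalities for Schwartz (or $C_0^\infty$) inputs and then extend by continuity and bilinearity; this justifies manipulating pointwise products and their transforms freely.

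For the first bound, $L^2\times L^2\to L^1\subset H^s$ for $s<-\tfrac12$, I would argue directly: if $u,v\in L^2$ then $uv\in L^1$ by Cauchy--Schwarz with $\LPN{1}{uv}\le \LPN{2}{u}\LPN{2}{v}$, so $\widehat{uv}$ is a bounded continuous function with $\LPN{\infty}{\widehat{uv}}\lesssim \LPN{2}{u}\LPN{2}{v}$. The embedding $L^1\subset H^s$ for $s<-\tfrac12$ then follows because $\HN{s}{uv}^2=\int \abs{\widehat{uv}(\xi)}^2\ip{\xi}^{2s}\,d\xi\lesssim \LPN{\infty}{\widehat{uv}}^2\int\ip{\xi}^{2s}\,d\xi$, and the last integral converges precisely when $2s<-1$. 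This is the easiest of the three and needs no serious convolution estimate.

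For the second and third bounds I would pass to the convolution picture and reduce matters to boundedness of an integral operator on $L^2$. Writing $f=\ip{\cdot}^{s}\widehat{u}$ and $g=\ip{\cdot}^{s'}\widehat{v}$ (or the analogous weighted transforms in the third case), the claim becomes an $L^2$ bound for the operator with kernel $K(\xi,\eta)=\ip{\xi}^{s}\ip{\eta}^{-\sigma_1}\ip{\xi-\eta}^{-\sigma_2}$ for the appropriate weights, and I would verify its boundedness by Schur's test, checking that $\sup_\xi\int\abs{K(\xi,\eta)}\,d\eta<\infty$ and the symmetric condition. The two regimes differ in which factor carries the lower regularity: in the $L^2\times H^{s'}$ case one of the inputs sits in $L^2$ (weight exponent $0$) so the kernel decay must come from the $H^{s'}$ factor, and the hypotheses $-\tfrac12<s\le 0$ and $s'>\tfrac12+s$ are exactly what make the Schur integral $\int\ip{\eta}^{0}\ip{\xi-\eta}^{-2s'}\ip{\xi}^{2s}\,d\eta$ (schematically) finite. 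The third case is the standard algebra-type estimate $H^{s_1}\cdot H^{s_2}\to H^{s_1}$ with $s_2>\tfrac12$, where one splits frequency space into the regions $\abs{\eta}\lesssim\abs{\xi-\eta}$ and its complement to distribute the weight $\ip{\xi}^{s_1}$ onto the two factors, using $s_2>\tfrac12$ to gain integrability.

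The main obstacle is bookkeeping the sharp range of exponents in the second line, where $s$ is allowed to be negative: the product $uv$ need not be locally integrable in any naive sense, and one must be careful that the negative weight $\ip{\xi}^s$ on the output is genuinely helping rather than being overwhelmed. The delicate point is checking that the Schur integral converges uniformly at the boundary exponents, which is where the strict inequality $s'>\tfrac12+s$ is essential; I expect the bulk of the genuine work to be in verifying this convergence and in the frequency splitting for the third estimate, while the first estimate and the density/extension steps are routine.
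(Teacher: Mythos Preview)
Your approach is correct and will work, though it differs from the paper's much shorter route. For the first statement you and the paper essentially agree (the paper phrases it as Sobolev embedding plus duality, which amounts to your argument). For the second, the paper observes that by duality the bound $L^2\times H^{s'}\to H^s$ is equivalent to $H^{-s}\times H^{s'}\to L^2$; since $0\le -s<\tfrac12$, Sobolev embedding gives $H^{-s}\hookrightarrow L^p$ with $\tfrac1p=\tfrac12+s$, and then H\"older together with $H^{s'}\hookrightarrow L^q$ for $\tfrac1q=-s$ (valid because $s'>\tfrac12+s$) finishes the job with no frequency-side computation at all. For the third statement the paper simply cites a standard reference (Runst--Sickel), or alternatively interpolates the second statement against the algebra property of $H^{s_2}$ for $s_2>\tfrac12$. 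Your direct Fourier-analytic route via kernel estimates and frequency splitting is perfectly valid and more self-contained, at the cost of more bookkeeping.

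One caution on your treatment of the second case: the schematic Schur integral you wrote, read as an integral in $\eta$ with $\xi$ fixed, would converge only for $s'>\tfrac12$, not the weaker hypothesis $s'>\tfrac12+s$. To cover the full range you actually need the bound $\sup_\eta\int\ip{\xi}^{2s}\ip{\xi-\eta}^{-2s'}\,d\xi<\infty$, which holds precisely when $(-2s)+2s'>1$, i.e.\ when $s'>\tfrac12+s$; this is the standard estimate for convolutions of Japanese-bracket weights, but it is a step beyond what your sketch indicates, and it is exactly where the negative output weight $\ip{\xi}^{2s}$ genuinely helps.
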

\begin{proof}
The first two statements may be proved using Sobolev embedding
inequalities and their corresponding dual inequalities.  The last case
is a particular case of Theorem 1, of Section 4.6.1 of
\cite{MR1419319}, alternatively it may be proved by interpolating the
second statement with the well known algebraic property of Sobolev
spaces $H^s$ for $s>\frac{1}{2}$.
\end{proof}

For $s\geq -1$, let $F_{\lambda}: H^{s+1} \to H^{s}\times \mathbb{R} $
and $F^*: H^{s+1} \times (0,\infty) \to H^{s}$ to be the maps:
\begin{align*}
  F_{\lambda}(r)&= \left( r^2+2r\lambda\tanh(\lambda x)+r_x , \int r \sech^2(\lambda x) ~dx \right)  ,
\\  F^*(r,\lambda)&=r^2+2r\lambda\tanh(\lambda x)-r_x-2\lambda^2\sech^2(\lambda\cdot) .
\end{align*}
It then follows from  Lemma \ref{techlem} , that the above maps define
quadratic (neglecting $\lambda$ for $F^*$ here), and hence analytic maps from $H^{s+1} \to H^{s}\times \RR$ and $H^{s+1} \times (0,\infty) \to H^{s}$, respectively.
Analyticity in $\lambda$ (and joined analyticity) follows from the obvious 
holomorphic extension of  $\lambda$ into the complex plane.

The equations
\begin{equation*}
r^2+2r\tanh x+r_x=u_0,
\end{equation*}
and
\begin{equation*}
r^2+2r\lambda\tanh (\lambda\cdot)-r_x-2\lambda^2\sech^2(\lambda\cdot)= u_0;
\end{equation*}
relating functions in range and image come from the expansion of the 
left hand sides of  \eqref{prob1} and \eqref{prob2} respectively.

Now let $L_{\lambda,r}$ denote the first component of the Fréchet
derivative at $r$, and similarly let $L_{\lambda,r}^*$ denote the
Fréchet derivative of $F^*$ with respect to the first component at
$(r,\lambda)$, i.e.
\begin{equation*}
L_{\lambda,r}v := 2\left(\lambda\tanh (\lambda \cdot)+r\right)v+v_x,
\end{equation*}
and $L_{\lambda,r}^*$ is its formal adjoint: 
\[ L_{\lambda,r}^*  v :=  2\left(\lambda \tanh(\lambda \cdot) +r\right)v -  v_x . \]

\begin{lem} \label{opL}
For any $s \in \mathbb{R}$ and $r\in H^{s}\cap L^2$, the abstract operator
$L_{\lambda,r}$ and its formal adjoint operator $L_{\lambda,r}^*$
define bounded operators from $H^{s+1}(\mathbb{R})$ to
$H^s(\mathbb{R})$, which we denote by $L_{\lambda,r}$ and
$L^*_{\lambda,r}$, respectively, suppressing $s$ from the notation.

Both $L_{\lambda,r}$ and $L_{\lambda,r}^*$ are Fredholm operators of index $1$ and $-1$ respectively. 
Moreover, setting $\phi_r=\sech^2( \lambda \cdot)e^{-2\int_0^{\cdot}r~dy}$, the operator $L_{\lambda,r}$ is surjective, with null space spanned by $\phi_r$; and the formal adjoint $L_{\lambda,r}^*$ is injective with closed range and cokernel spanned by $\phi_r$. 

\end{lem}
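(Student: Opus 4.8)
The plan is to treat $L_{\lambda,r}$ as a compact perturbation of an explicit model first‑order differential operator, and then to read off the index from explicit one‑dimensional kernels. Boundedness of $L_{\lambda,r},L_{\lambda,r}^*\colon H^{s+1}\to H^s$ is immediate: $\partial_x$ maps $H^{s+1}\to H^s$, multiplication by the smooth bounded function $2\lambda\tanh(\lambda\cdot)$ preserves $H^s$, and multiplication by $r\in H^s\cap L^2$ carries $H^{s+1}$ into $H^s$ by Lemma \ref{techlem}. For the kernel I would solve the ODE $L_{\lambda,r}v=v_x+2(\lambda\tanh(\lambda x)+r)v=0$, which has $L^1_{\text{loc}}$ coefficient, so its only distributional solutions are the scalar multiples of the integrating‑factor solution $\exp(-2\int_0^x(\lambda\tanh(\lambda y)+r)\,dy)=\sech^2(\lambda x)\,e^{-2\int_0^x r}=\phi_r$. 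Since $\sech^2(\lambda x)$ decays like $e^{-2\lambda|x|}$ while the Cauchy--Schwarz bound $|\int_0^x r|\lesssim\|r\|_{L^2}|x|^{1/2}$ makes $e^{-2\int_0^x r}$ subexponential, $\phi_r$ decays exponentially; feeding this into $\phi_r'=-2(\lambda\tanh(\lambda\cdot)+r)\phi_r$ and bootstrapping with Lemma \ref{techlem} gives $\phi_r\in H^{s+1}$, so $\ker L_{\lambda,r}=\Span\{\phi_r\}$ is one‑dimensional. The analogous computation for $L_{\lambda,r}^*v=-v_x+2(\lambda\tanh(\lambda x)+r)v=0$ produces $v\propto\phi_r^{-1}\sim\cosh^2(\lambda x)$, which grows exponentially and lies in no $H^{s+1}$; hence $\ker L_{\lambda,r}^*=\{0\}$.

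To obtain Fredholmness and the index I would split $L_{\lambda,r}=L_0+2M_r$, where $L_0:=\partial_x+2\lambda\tanh(\lambda\cdot)$ and $M_r$ denotes multiplication by $r$. The coefficient of $L_0$ tends exponentially to the nonzero constants $\pm2\lambda$ at $\pm\infty$, so its limiting constant‑coefficient operators $\partial_x\pm2\lambda$ have nonvanishing symbol $i\xi\pm2\lambda$ and are invertible on $\RR$; by the standard Fredholm theory of such operators (equivalently, conjugating by $\cosh^2(\lambda\cdot)$ turns the problem into $\partial_x$ on exponentially weighted Sobolev spaces) $L_0$ is Fredholm from $H^{s+1}$ to $H^s$. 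Its kernel is spanned by the Schwartz function $\sech^2(\lambda\cdot)$ and the kernel of its formal adjoint by $\cosh^2(\lambda\cdot)$, which lies in no $H^{s+1}$, so $\mathrm{ind}\,L_0=1-0=1$. Meanwhile $M_r\colon H^{s+1}\to H^s$ is compact: approximating $r$ in $H^s\cap L^2$ by functions in $C_0^\infty$ reduces, via Lemma \ref{techlem} and the closedness of compact operators under operator‑norm limits, to the case $\rho\in C_0^\infty$, where $M_\rho$ maps $H^{s+1}$ into functions supported in a fixed compact set and the local Rellich--Kondrachov embedding into $H^s$ yields compactness. Stability of the index under compact perturbation then gives $\mathrm{ind}\,L_{\lambda,r}=\mathrm{ind}\,L_0=1$.

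Finally I would assemble the conclusions. Being Fredholm of index $1$ with a one‑dimensional kernel forces $\dim\mathrm{coker}\,L_{\lambda,r}=0$, so $L_{\lambda,r}$ has closed range and is surjective with null space $\Span\{\phi_r\}$. The formal adjoint $L_{\lambda,r}^*$ is then Fredholm of index $-1$ and injective (kernel $\{0\}$), hence has closed range and one‑dimensional cokernel; by the Fredholm alternative a datum $f\in H^s$ lies in the range of $L_{\lambda,r}^*$ exactly when it is annihilated by the distributional solutions of $L_{\lambda,r}w=0$, i.e. when $\langle f,\phi_r\rangle=0$, so the cokernel is spanned by $\phi_r$. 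For the relevant range $s\ge -1$ one checks directly from $\phi_r'=-2(\lambda\tanh(\lambda\cdot)+r)\phi_r$ and $r\in L^2$ that $\phi_r\in H^{-s}$, so this pairing is well defined.

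The one genuinely delicate point, and the step I expect to cost the most, is the Fredholmness (closed range) of the model operator $L_0$: because $r\in L^2$ need not decay pointwise, $L_{\lambda,r}$ is not itself literally asymptotically constant, and it is precisely the isolation of the well‑behaved part $L_0$ together with the compactness of the remainder $M_r$ that makes the index computation rigorous. Everything else — the explicit kernels, the regularity $\phi_r\in H^{s+1}$, and the bookkeeping of indices — is routine once these two facts are established.
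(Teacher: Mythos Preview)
Your argument is correct, but it follows a genuinely different route from the paper's. The paper does not invoke abstract Fredholm theory at all: after the kernel computation (which is the same as yours), it constructs an explicit right inverse $T_r$ of $L_{\lambda,r}$ as an integral operator with kernel $K_r(x,y)$, proves directly that $T_r$ is bounded from $L^2$ to $H^1$ with an explicit constant depending on $\LPN{2}{r}$, and reads off surjectivity of $L_r:H^1\to L^2$. Duality then gives that $L_r^*$ is injective with closed range on $L^2\to H^{-1}$, and a separate elliptic-type regularity estimate (\eqref{regiterstate}, i.e.\ $\HN{s+1}{f}\le C(\LPN{2}{f}+\HN{s}{L_r^*f})$ proved by iteration starting from the $L^2$ case) bootstraps both statements to general $s\ge -1$.

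Your approach---writing $L_{\lambda,r}=L_0+2M_r$ with $L_0$ Fredholm of index $1$ and $M_r$ compact via Rellich and density of $C_0^\infty$---is more conceptual and avoids the explicit kernel formula. The price is that it does not produce the quantitative estimates on the inverse that the paper extracts along the way; in particular, Remark~\ref{wvest} and the proof of Theorem~\ref{inverse} rely precisely on the regularity inequality \eqref{regiterstate} (with explicit polynomial dependence on $\LPN{2}{r}$ and $\HN{s}{r}$), which comes for free from the paper's construction but would have to be proved separately if one follows your route. So your proof establishes the lemma as stated, but if you adopt it you should be aware that the estimate \eqref{regiterstate} still needs to be supplied for the later sections.
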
 
\begin{proof}

  It follows from Lemma \ref{techlem} that the operators
  $L_{\lambda,r}$ and $L_{\lambda,r}^*$ define continuous linear
  operators from $H^{s+1}$ to $H^s$. A simple calculation shows that
  \[ L_{\lambda,r} \sech^2(\lambda x)\exp\left(-2\int_0^{x}r~dy\right) = 0. \] Since $L_{\lambda,r}\phi=0$
  is a scalar ordinary differential equation, every solution is a
  multiple of $\sech^2(\lambda x)\exp\left(-2\int_0^{x}r~dy\right)$ and the null space is one
  dimensional.  Similarly, one can easily check that $L_{\lambda,r}^*$
  is injective (since solutions to the homogeneous equation are
  multiples of $\cosh^2(\lambda x)\exp\left(2\int_0^{x}r~dy\right)$) and $\sech^2(\lambda x)\exp\left(-2\int_0^{x}r~dy\right)$ spans
  the cokernel of $L_{\lambda,r}^*$.

To complete the proof we need to show $L_{\lambda,r}$ is surjective, and $L_{\lambda,r}^*$ is injective with closed range.
  By scaling, it suffices to show the case when $\lambda=1$.  For reasons of brevity we will use the shorthand $L_{r}:=L_{1,r}$, and $L:=L_{1,0}$.  
  
   We start by defining an integral operator from $L^2$ to $H^{1}$
  which is a right inverse of $L_r$.  We begin with  the simpler case when $r=0$.

Let $\eta\in\testf([-2,2])$ be a non-negative function such that $\eta\equiv 1$ on $[-1,1]$ and consider the operator  $T$ defined by
\begin{align*}
T(g)=& e^{-2\int_{0}^x \tanh s~ds} \int_{-\infty}^x e^{2\int_{0}^y \tanh s^{\prime}~ds^{\prime}}\eta(y)g(y) ~dy+\\& e^{-2\int_{0}^x \tanh s ~ds} \int_0^x e^{2\int_{0}^y \tanh s^{\prime}~ds^{\prime}} \left(1-\eta(y)\right)g(y) ~dy.
\end{align*}

Note that $T$ is a well defined operator for functions in $\testf$.  
It can then be easily checked that $Tg$  satisfies $LTg=g$.  Now let $K(x,y)$ be the kernel of $T$:
\begin{equation}
    K(x,y) \equiv
    \left\{ \begin{array}{ll}
        \eta(y)\cosh^2 y\sech^2 x & y < x \leq 0\\
        \eta(y)\cosh^2 y\sech^2 x & y < 0 < x\\
        -(1-\eta(y))\cosh^2 y\sech^2 x & x \leq y \leq 0\\
        \cosh^2 y\sech^2 x & 0 \leq y < x\\
        0 & \textit{otherwise}
    \end{array}\right. 
\end{equation}

Now consider the case for general $r$: formally we have 
\begin{equation*}
T_r g := \exp\left(-2\int_0^{\cdot}r\right)T\exp\left(2\int_0^{\cdot}r\right)g,
\end{equation*}
satisfies $L_rT_r g=g$; furthermore the kernel of $T_r$ is given by
\begin{equation*}
K_r(x,y)=K(x,y)\exp\left(-2\int_0^x r + 2\int_0^y r\right).
\end{equation*}

We now claim that 
\[ \HN{1}{ T_rg } \lesssim \LPN 2 g . \]

Observe that 
\begin{equation}
K_r(x,y)\lesssim e^{-2\abs{y-x}+\sqrt{\abs{y-x}}\LPN{2}{r}}\lesssim e^{-\abs{y-x}+\LPN{2}{r}^2},
\end{equation} and hence
\begin{equation}\label{L2OpBd}
\LPN{2}{T_r g}\lesssim \LPN{2}{e^{-\abs{\cdot}}*g}\lesssim \LPN{2}{g}.
\end{equation}
The equality  
\begin{equation}\label{RegIter}
\partial_x T_rg +2\left( \tanh(x) +r\right)T_r g = g, 
\end{equation}  
implies 
\begin{align*}
\LPN{2}{ \partial_x T_rg } &\le 2 \LPN{2}{ T_rg }+2\LPN{2}{rT_rg}+ \LPN{2}{ g }\\
 &\lesssim \LPN{2}{g}+\LPN{2}{r}\LPN{\infty}{T_rg}\\
&\lesssim \LPN{2}{g}+\LPN{2}{r}\LPN{2}{\partial_x T_rg}^{1/2}\LPN{2}{T_rg}^{1/2},
\end{align*} 
where we used the $L^2$ estimate \eqref{L2OpBd}, Hölder's inequality and Gagliardo-Nirenberg's inequality.  Finally applying Young's inequality and \eqref{L2OpBd} again, we obtain 
\begin{equation}\label{thirds}
\LPN{2}{ \partial_x T_rg} \lesssim \left(1+\LPN{2}{r}^2\right)\LPN{2}{g}.
\end{equation} 

Hence if $r\in L^2$, then $T_r$ extends to a bounded operator from $L^2$ to $H^1$, satisfying $ L_r T_r g= g$, thus $L_r:H^1\rightarrow L^2$ is surjective.

By duality, for every $r\in L^2$, the adjoint operator $L^*_r: L^2 \rightarrow H^1$ is injective with closed range, or equivalently
\begin{equation}\label{zuiop}
\LPN 2 f \lesssim \HN {-1}{L^*_r f}   
\end{equation}
for all $f\in L^2$.

We will now show that given any $f\in H^{s+1}$ and $h\in H^s\cap L^2$, if 
\[g:=2\left(\tanh(\cdot) +h\right)f \pm  f_x,\]
then we have the following inequality
\begin{equation}\label{regiterstate}
\HN {s+1} f \leq C\left(\LPN 2 f+ \HN s g\right), 
\end{equation}
for some constant $C$ depending on $\HN s h + \LPN 2 h$.

First note the trivial estimate
\begin{align}
\label{seconds}\begin{split}
\HN{s+1}{f}&\lesssim \LPN 2 f+\HN{s}{\partial_x f}\\
&\lesssim \LPN 2 f+\HN{s}{g}+\HN{s}{f}+\HN{s}{h f}.
\end{split}
\end{align}

Consider the case for $-1\le s<-\frac12$: it follows from Lemma \ref{techlem} and \eqref{seconds} that
\begin{align}
\HN{s+1}{f}&\lesssim \LPN 2 f+\HN{s}{g}+\left(1+\LPN{2}{h}\right)\LPN{2}{f}.\label{firsts}
\end{align}

Now consider the case when $s\ge-\frac12$: again from Lemma \ref{techlem} and \eqref{seconds} we have 
\[\HN{s+1}{f}\lesssim \HN{s}{f}+\HN{s}{g}+\left(1+\HN s h+ \LPN 2 h\right)\HN{s+3/4}{f}.\]
Using \eqref{firsts} and applying the above estimate recursively we obtain \eqref{regiterstate}.

Combining \eqref{zuiop} with \eqref{regiterstate} ($h=r$), it follows
that for all $s\geq -1$, $f\in H^{s+1}$ and $r\in L^2\cap H^s$
\[
\HN {s+1}{f} \lesssim \HN{s}{L^*_r f},
\]
i.e. $L^*_r:H^{s+1}\rightarrow H^{s}$ is injective with closed range.

By duality it follows that if $r\in L^2$ then $L_r:L^2\rightarrow
H^{-1}$ is surjective; moreover, as a consequence of
\eqref{regiterstate} ($h=r$) we obtain $L_r:H^{s+1}\to H^{s}$ is
surjective for all $s \in \mathbb{R}$ and $r\in L^2\cap H^s$, and
again the full statement for $L^*_r$ follows.
\end{proof}

Let $U^s_{>\kappa}\subset H^s$ denote the set of all functions in $H^s$ whose
associated Schrödinger operator has spectrum contained in
$(\kappa,\infty)$.  Similarly, define $U^s_{<\kappa}$ to be the subset
of $H^s$ of all functions $f$ whose associated Schrödinger operator
has spectrum $\omega(T_f)$ such that $\omega \setminus
(\kappa,\infty)\neq\emptyset$.

\begin{thm}\label{inverse} 
  For any $s\geq -1$ the map $F_{\lambda}:H^{s+1}\rightarrow H^s\times\RR$ is an analytic diffeomorphism onto its range. Moreover, for any $f\in U^s_{>-\lambda^2}$, there exists $\rho\in \RR$ such that $(f,\rho)$ is contained in the range of $F_{\lambda}:H^{s+1}\rightarrow H^s\times\RR$.
  
  For any $s\geq -1$ the map $F^*:H^{s+1} \times (0,\infty)\rightarrow H^s$ is an analytic diffeomorphisms onto  $U^s_{<0}$.
\end{thm}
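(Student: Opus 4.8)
The plan is to run the analytic inverse function theorem on each map. Since $F_{\lambda}$ and $F^*$ are already known to be analytic, it suffices to (i) show that the Fréchet derivative is a bounded linear isomorphism at every point, so that each map is a local analytic diffeomorphism and, in particular, an open map; (ii) prove global injectivity; and (iii) identify the image, its spectral description coming from Proposition \ref{groundstates}. Step (i) is where Lemma \ref{opL} enters, while step (iii) is where the regularity estimate \eqref{regiterstate} is used to upgrade the $L^2$ preimages furnished by Proposition \ref{groundstates} to $H^{s+1}$. Combining openness with global injectivity gives a homeomorphism onto an open image whose local inverses are analytic, hence an analytic diffeomorphism onto the image.

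For the derivatives, consider first $F_{\lambda}$, whose derivative at $r$ is $v\mapsto\left(L_{\lambda,r}v,\int v\sech^2(\lambda x)\,dx\right)$. By Lemma \ref{opL}, $L_{\lambda,r}\colon H^{s+1}\to H^s$ is surjective with one-dimensional kernel spanned by $\phi_r=\sech^2(\lambda\cdot)e^{-2\int_0^{\cdot}r}$; since $\phi_r>0$, the scalar functional $v\mapsto\int v\sech^2(\lambda x)\,dx$ does not vanish on $\phi_r$, so the combined map into $H^s\times\RR$ is a continuous bijection and thus an isomorphism by the open mapping theorem. The map $F^*$ is more delicate: its derivative at $(r,\lambda)$ in the direction $(v,\mu)$ is $L_{\lambda,r}^*v+\mu\,\partial_\lambda F^*(r,\lambda)$, and by Lemma \ref{opL} the operator $L_{\lambda,r}^*$ is injective with closed range of codimension one, the cokernel being spanned by $\phi_r$ in the $L^2$ pairing. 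Thus the derivative is an isomorphism precisely when $\partial_\lambda F^*$ is transverse to the range, i.e. $\int\partial_\lambda F^*\,\phi_r\,dx\neq0$. The hard part will be verifying this nondegeneracy, and I expect it to be the main obstacle. Rather than compute the unpleasant integral directly, I would exploit the spectral meaning of the construction: writing $u_0=F^*(r,\lambda)$, the Riccati correspondence of Appendix \ref{appenSchr} shows the ground state of $H_{u_0}$ is $\psi_0=\sech(\lambda\cdot)e^{-\int_0^{\cdot}r}$, so that $\psi_0^2=\phi_r$ and the ground state energy $E_0(u_0)$ equals $-\lambda^2$. Differentiating the identity $E_0(F^*(r,\lambda))=-\lambda^2$ in $\lambda$ and using first-order perturbation theory, $DE_0(u_0)[\delta u]=\left(\int\delta u\,\psi_0^2\right)\big/\int\psi_0^2$, gives $\int\partial_\lambda F^*\,\phi_r\,dx=-2\lambda\int\phi_r\,dx\neq0$. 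Hence $DF^*$ is a bounded bijection and an isomorphism.

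Global injectivity reduces in both cases to a first-order linear ODE. If the first components of $F_{\lambda}(r_1)$ and $F_{\lambda}(r_2)$ agree, then $w:=r_1-r_2\in H^{s+1}$ solves $w_x+w\left(r_1+r_2+2\lambda\tanh(\lambda\cdot)\right)=0$, whose solution space is spanned by a strictly positive function, so equality of the scalar components forces $\int w\sech^2(\lambda\cdot)=0$ and hence $w\equiv0$. For $F^*$, if $F^*(r_1,\lambda_1)=F^*(r_2,\lambda_2)$ then both sides have ground state energy $-\lambda_1^2=-\lambda_2^2$, whence $\lambda_1=\lambda_2=:\lambda$, and $w$ now solves $w_x-w\left(r_1+r_2+2\lambda\tanh(\lambda\cdot)\right)=0$, whose nonzero solutions grow like $\cosh^2(\lambda\cdot)$ and so cannot lie in $H^{s+1}$, forcing $w\equiv0$. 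Finally, to identify the ranges I would invoke Proposition \ref{groundstates}: the first component of $F_{\lambda}(r)$ equals $\miura(r+\lambda\tanh(\lambda\cdot))-\lambda^2$, so it realises exactly those $f$ whose Schrödinger operator has spectrum in $(-\lambda^2,\infty)$, giving surjectivity onto $U^s_{>-\lambda^2}$; likewise $F^*(r,\lambda)=\miura^*(r+\lambda\tanh(\lambda\cdot))-\lambda^2$ realises exactly those $f$ with ground state energy $-\lambda^2$, and letting $\lambda$ range over $(0,\infty)$ sweeps out precisely $U^s_{<0}$. Since Proposition \ref{groundstates} only produces preimages in $L^2$, I would close the argument by applying the bootstrap estimate \eqref{regiterstate} of Lemma \ref{opL} to upgrade them to $H^{s+1}$; together with openness and injectivity this yields the claimed analytic diffeomorphisms onto the range and onto $U^s_{<0}$.
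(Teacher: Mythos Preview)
Your proposal is correct and follows essentially the same route as the paper: local invertibility via Lemma \ref{opL} and the inverse/implicit function theorem, global injectivity by reducing to a scalar linear ODE, and range identification via Proposition \ref{groundstates} followed by the bootstrap \eqref{regiterstate}. The only cosmetic difference is in the transversality argument for $F^*$: the paper argues abstractly from the identity $G(F^*(r,\lambda))=-\lambda^2$ (with $G$ the ground-state-energy map) that $\partial_\lambda F^*$ cannot lie in the range of $L^*_{\lambda,r}$, whereas you make the same argument explicit by identifying $DG$ with the Feynman--Hellmann functional $\delta u\mapsto \int\delta u\,\psi_0^2/\int\psi_0^2$ and computing the pairing directly.
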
 
\begin{proof}
First we show the two maps are local analytic diffeomorphisms.  
  
Note the second component of $DF_{\lambda}|_r$ is simply the map $f\mapsto \ip{f,\sech^2(\lambda \cdot)}$; hence from Lemma \ref{opL} we obtain $DF_{\lambda}|_r$ is
  invertible -- here we use the fact \[\ip{\phi_r,\sech^2(\lambda \cdot)}>0,\] where $\phi_r$ is defined as in Lemma \ref{opL}.  Hence by the inverse function theorem, $F_{\lambda}$ is a local analytic  diffeomorphism.
  
  Let $G:H^{-1}\rightarrow \RR$ be the map from potentials to the ground state energy of their corresponding Schrödinger operator.  By Proposition \ref{groundstates} we have that $G(F^*(f,\lambda))=-\lambda^2$, from which it follows that the derivative of $F^*$ with respect to the
  second component is not contained in the range of the derivative with respect to the first component.  Then from Lemma \ref{opL} and the implicit function theorem, it follows that $F^*$ is a local analytic  diffeomorphism. 

We now prove the injectivity of the two maps.
Suppose $r_i$, $i\in\{1,2\}$ satisfy $F_{\lambda}(r_1) = F_{\lambda}(r_2)$. Then, with $w= r_2-r_1$ we have
\[ w_x + (2\lambda \tanh(\lambda x) + r_2+r_1) w = 0 , \qquad \int \sech^2(\lambda x) w(x) dx =0. \]
The same argument as for invertibility implies that $w=0$, hence $r_1=r_2$ 
and $F_{\lambda}$ is injective. 

We now show $F^*$ is injective. First of all, if $F^*(r_1,\lambda_1)
=F^*(r_2,\lambda_2)$, then by Lemma \ref{groundstates} we necessarily have $\lambda_1=\lambda_2$. Next,  with $w=
r_2-r_1$
\[ w_x - (2\lambda \tanh(\lambda x) + r_1+r_2) w = 0. \]
 The only solution in $L^2$ to this equation is $w=0$. This implies injectivity.

Now we make the following observation: if $F_{\lambda}(r) = (g,\rho)$
for $r \in L^2$ and $g \in H^s$, $s >-1$ then we also have $r\in
H^{s+1}$; similarly, if $F^*(r,\lambda) = g$ for $r \in L^2$ and $g
\in H^s$, $s >-1$ then we also have $r\in H^{s+1}$. This follows by
iteratively applying \eqref{regiterstate}, with $h=r/2$.

Thus as a consequence of the Proposition \ref{groundstates}, together with the above observation, we obtain that the range of the projection  of $F_{\lambda}(r):H^{s+1}\rightarrow H^s\times \RR$ onto its first component is precisely $U^s_{>-\lambda^2}$, and the range of $F^*:H^{s+1} \times (0,\infty)\rightarrow H^s$ is $U^s_{<0}$.
\end{proof}

\begin{rem}\label{wvest}
Note that working out the details of the $H^{s+1}$ estimates in the proof above, one may show that if 
\[v(t,x):= w(t,x-y_0)^2+2w(t,x-y_0)\tanh(x-y_1)+w_x(t,x-y_0),\]
for some $y_0,y_1\in \RR$ then for every integer $s\ge -1$ there exists a $N>0$ such that
\begin{equation*}
\HN{s+1}{ w } \lesssim (1+\LPN 2 w + \HN s v )^N \left(\HN s v+\LPN 2 w\right).
\end{equation*} 
This estimate will be used later in Section \ref{seckink}.
\end{rem}

\section{The modified Korteweg-de Vries equation close to a kink}\label{seckink}

In Section \ref{invmiurasec}, we mapped $H^{s}$ KdV initial data to
mKdV initial data -- the mKdV initial data being in the form of a sum
of a $H^{s+1}$ function, and a kink of the form
$\lambda\tanh(\lambda \cdot)$.  In this section we will study the
corresponding mKdV problem.

First note by scaling, we may restrict to the case $\lambda=1$.  Recall that $Q_1(t,x)\equiv Q(t,x)\equiv \tanh(x+2t)$
is an  explicit kink solution to \eqref{mKdV}.  We
now consider solutions to mKdV equation:
\begin{equation}\label{uKINKmKdV}
\left\{\begin{array}{l}
u_t + u_{xxx} - 6u^2 u_x=0\\
u(0,x) = v_0(x)+\tanh(x)
\end{array}\right.,
\end{equation}
for initial data $v_0\in H^s$, $s\geq 0$, such that $u-Q\in H^s$.  Equivalently, writing $u=v+Q$, we have:
\begin{equation}\label{KINKmKdV}
\left\{\begin{array}{l}
v_t + v_{xxx} - 2 \partial_x (3Q^2 v+ 3Q v^2+v^3 )=0\\
v(0,x) = v_0(x)
\end{array}\right..
\end{equation}

In order to construct global solutions to \eqref{uKINKmKdV}, we will
need to prove a number of energy estimates.  In our discussions below,
we will use a number of formal calculations, which are not difficult
to justify rigorously.

\begin{lem}
Let $p$ be any $\cinfty(\RR^2)$ function with uniformly bounded derivatives and assume $v\in C([0,\infty); H^{1}(\RR))$ to be a solution to \eqref{KINKmKdV}.  Then
\begin{align}\label{nifty}
\begin{split}
\frac{d}{dt}  \left[ \int p v^2~dx\right]=    &\int \Big[ p_t v^2 -3 p_x v_x^2+ p_{xxx}v^2 \\
                                           &-6p_x Q^2 v^2-8p_x Q v^3-3p_xv^4 \\
                                           &+12p Q Q_{x} v^2+4pQ_x v^3 \Big]~dx.
\end{split}
\end{align} 
\end{lem}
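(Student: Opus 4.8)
The plan is to differentiate the functional directly, insert the evolution equation \eqref{KINKmKdV} for $v_t$, and reduce everything to integrals of $v$, $v_x$, $Q$ and $p$ together with their derivatives by repeated integration by parts; since $Q$ and $p$ are smooth with controlled derivatives and $v\in H^1$ decays at infinity, all boundary terms will vanish. First I would write
\[
\frac{d}{dt}\int p v^2~dx = \int p_t v^2~dx + 2\int p v\, v_t~dx,
\]
which already produces the $p_t v^2$ term, and then substitute $v_t = -v_{xxx} + 2\partial_x(3Q^2 v + 3Q v^2 + v^3)$ to split the remaining $2\int p v\, v_t~dx$ into a dispersive piece $-2\int p v\, v_{xxx}~dx$ and a nonlinear/kink piece $4\int p v\,\partial_x(3Q^2 v + 3Q v^2 + v^3)~dx$.

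For the dispersive piece I would integrate by parts three times. Moving one derivative gives $-2\int(p_x v + p v_x)v_{xx}~dx$; using $\int p v_x v_{xx}~dx = -\tfrac12\int p_x v_x^2~dx$ and, after two further integrations by parts, $\int p_x v\, v_{xx}~dx = \tfrac12\int p_{xxx}v^2~dx - \int p_x v_x^2~dx$, one collects
\[
-2\int p v\, v_{xxx}~dx = \int p_{xxx} v^2~dx - 3\int p_x v_x^2~dx,
\]
which accounts for the second and third terms of \eqref{nifty}.

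For the nonlinear piece I would expand the derivative and split the integrand into the terms carrying an explicit factor $v_x$, namely $12pQ^2 v v_x$, $24pQ v^2 v_x$ and $12 p v^3 v_x$, and those that do not, namely $24 p Q Q_x v^2$ and $12 p Q_x v^3$. The former I rewrite via $v v_x=\tfrac12(v^2)_x$, $v^2 v_x=\tfrac13(v^3)_x$, $v^3 v_x=\tfrac14(v^4)_x$ and integrate by parts so that the derivative falls on $p$ and the appropriate power of $Q$; for example $\int 12 p Q^2 v v_x~dx = -6\int(p_x Q^2 + 2pQQ_x)v^2~dx$. Collecting coefficients, the $pQQ_x v^2$ terms combine as $24-12=12$ and the $pQ_x v^3$ terms as $12-8=4$, while the integrations by parts simultaneously generate the $-6p_x Q^2 v^2$, $-8 p_x Q v^3$ and $-3 p_x v^4$ contributions, which is exactly the remaining content of \eqref{nifty}.

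The only genuinely delicate point is rigor: for a solution merely in $C([0,\infty);H^1)$ the quantities $v_{xxx}$ and $v_t$ lie in a negative Sobolev space, so the manipulations above are a priori only formal. I would therefore first prove the identity for smooth, rapidly decaying solutions (for instance those produced from mollified data), where every integration by parts is legitimate and all boundary terms vanish, and then pass to the limit using that each term in \eqref{nifty} depends continuously on $v$ in the $H^1$ topology --- here the one-dimensional embedding $H^1\hookrightarrow L^\infty$ controls the quartic term $\int p_x v^4~dx$ --- together with continuous dependence on the data. This approximation argument is the main obstacle, though a routine one given the well-posedness and a priori bounds available for \eqref{KINKmKdV}.
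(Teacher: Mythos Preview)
Your proposal is correct and follows exactly the route the paper takes: differentiate, substitute the equation, and integrate by parts; the paper's own proof says nothing more than ``equation \eqref{nifty} follows by employing \eqref{KINKmKdV} and applying a series of integrations by parts,'' together with the remark that the equation furnishes enough time regularity to justify the computation. One small slip: after moving one derivative in $-2\int p v\,v_{xxx}\,dx$ you should obtain $+2\int(p_x v+p v_x)v_{xx}\,dx$, not the negative; your stated final identity $\int p_{xxx}v^2\,dx-3\int p_x v_x^2\,dx$ is nonetheless correct, so this is only a typo in the intermediate line.
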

\begin{proof}
Note that by \eqref{KINKmKdV}, $v$ also has some time regularity. Thus, equation \eqref{nifty} follows by employing \eqref{KINKmKdV} and applying a series of integrations by parts.
\end{proof}

We are now in the position to prove global bounds on the $L^2$ norm of smooth solutions to \eqref{KINKmKdV}, as well as a ``Kato smoothing'' type estimate.

\begin{lem}\label{localbd}
Suppose $v\in C([0,\infty); H^{1}(\RR))$ is a solution to (\ref{KINKmKdV}); then for any $t\in [0,\infty)$, we have 
\begin{equation}\label{badL2bd}
\LPN{2}{v(t,\cdot)} \lesssim \LPN{2}{v_0}+t^{1/2}.
\end{equation}
Moreover for any $T>0$
\begin{equation}\label{KATOSMOOTH}
\int_{0}^T\int_{-\infty}^{\infty} Q_x(t,x)v_x(t,x)^2~dx~dt\leq C(T,\LPN{2}{v(0,\cdot)}).
\end{equation}
\end{lem}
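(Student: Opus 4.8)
The plan is to run a weighted energy argument based on \eqref{nifty}, choosing the multiplier $p$ to travel with the kink, $p=p(x+2t)$, so that every coefficient on the right of \eqref{nifty} is localised where $Q_x=\sech^2(x+2t)$ is concentrated. Writing $Q=\tanh(x+2t)$ and $Q_x=\sech^2(x+2t)$, I will use the identities $Q^2+Q_x=1$, $Q_{xx}=-2QQ_x$ and $\partial_tQ=2Q_x$ throughout.

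For the smoothing bound \eqref{KATOSMOOTH} I would take $p$ to be a bounded increasing profile with $p_x=Q_x$, for instance $p=Q+2$, so that $1\le p\le3$ and $p_t=2Q_x$. Then the term $-3p_xv_x^2$ in \eqref{nifty} is exactly $-3Q_xv_x^2$, and after collecting the remaining coefficients (using $Q^2+Q_x=1$) and integrating over $[0,T]$ one arrives at
\[ 3\int_0^T\!\int Q_xv_x^2\,dx\,dt+\int p\,v(T,\cdot)^2\,dx=\int p\,v_0^2\,dx+\int_0^T\!\int\big(a_2v^2+a_3v^3\big)\,dx\,dt-3\int_0^T\!\int Q_xv^4\,dx\,dt, \]
where $a_2,a_3$ are explicit bounded functions that are $\lesssim Q_x$. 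Discarding the favourable quartic term and the nonnegative term $\int p\,v(T,\cdot)^2$, the smoothing integral is controlled by the initial data together with the space--time integrals of the localised quadratic and cubic terms; thus \eqref{KATOSMOOTH} reduces to estimating those, which is the same task that arises for \eqref{badL2bd}.

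For \eqref{badL2bd} I would take $p\equiv1$, which annihilates all of \eqref{nifty} except the two terms carrying $Q_x$:
\[ \frac{d}{dt}\LPN{2}{v}^2=12\int QQ_xv^2\,dx+4\int Q_xv^3\,dx. \]
The structural fact I would exploit is that this is the time derivative of a renormalised mass. Differentiating $\int Qv$ and using \eqref{KINKmKdV} gives $\tfrac{d}{dt}\int Qv=-6\int QQ_xv^2-2\int Q_xv^3$, because the linear-in-$v$ contribution $2Q_x+Q_{xxx}-6Q_xQ^2$ vanishes identically (it equals $2Q_x(1-Q^2-Q_x)=0$). Hence $\tfrac{d}{dt}\big(\LPN{2}{v}^2+2\int Qv\big)=0$, the restriction to the kink of the conserved mKdV mass $\int u^2$; the growth of $\LPN{2}{v}^2$ is therefore driven entirely by interaction localised at the kink, which is why one expects only $t^{1/2}$ growth. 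To quantify this I would integrate the displayed identity in time: the quadratic term integrates by parts to $12\int Q_xvv_x$, pairing the local $L^2$ mass against the Kato density $\int Q_xv_x^2$, while the cubic term is treated through the localised Gagliardo--Nirenberg inequality $\LPN{\infty}{Q_x^{1/2}v}^2\lesssim\big(\int Q_xv^2\big)^{1/2}\big(\int Q_xv^2+\int Q_xv_x^2\big)^{1/2}$, which again produces the smoothing density $\int Q_xv_x^2$ with the correct weight $Q_x$.

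The two estimates are coupled, and I would close them together by a continuity argument in $T$: assume the linear-in-$t$ bound \eqref{badL2bd} on $[0,T]$, insert it into the identity of the previous paragraph to control $\int_0^T\!\int Q_xv_x^2$, and feed this back to improve \eqref{badL2bd}. I expect the genuine difficulty to be the cubic interaction $\int Q_xv^3$: the crude form of the estimate above contributes, after the time integration, a term of order $\LPN{2}{v}^3\,t$, which is superlinear and would formally permit faster-than-stated growth. Overcoming this is where the finer structure must be used—the conservation law above, the good sign of the quartic term $-3\int Q_xv^4$, and the exact matching of the smoothing weight $Q_x=\sech^2(x+2t)$ to the localised nonlinearity—so that the localised space--time integrals grow only linearly in $t$ and yield the stated $t^{1/2}$ bound.
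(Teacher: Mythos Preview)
You have set up exactly the right identities---the $p\equiv 1$ identity and a weight with $p_x=Q_x$ (the paper takes $p=Q$, which differs from your $p=Q+2$ only by a constant)---and you correctly flag the good sign of $-3\int Q_xv^4$. The gap is that you then \emph{discard} this quartic and try to close a coupled bootstrap between \eqref{badL2bd} and \eqref{KATOSMOOTH}. As you yourself observe, this does not close: the cubic term contributes $\|v\|_2^3\,t$, and even your quadratic term, after the integration by parts to $12\int Q_xvv_x$, pairs the smoothing density against $\int_0^T\!\int Q_xv^2$, which the bootstrap hypothesis only bounds by $T^2$, not $T$. (Your remark that $\|v\|_2^2+2\int Qv$ is conserved is formally the mKdV mass of $u=Q+v$, but $\int Qv$ is not a priori finite for $v\in L^2$, so it cannot be used directly; the paper works with $\int Qv^2$ instead.)

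The missing step is to \emph{use} the quartic rather than discard it. In the $p=Q$ identity every quadratic coefficient is pointwise $\lesssim Q_x$, and Cauchy--Schwarz in the finite measure $Q_x\,dx$ gives
\[
\int Q_xv^2\le \|Q_x^{1/2}\|_2\,\Big(\int Q_xv^4\Big)^{1/2}\le \frac1\tau\int Q_xv^4+\tau\|Q_x^{1/2}\|_2^2,\qquad
\int Q_x|v|^3\le \frac1\kappa\int Q_xv^4+\kappa\int Q_xv^2,
\]
so both the localised quadratic and cubic pieces are absorbed by $-3\int Q_xv^4$ at the cost of an \emph{absolute} constant (here the finiteness of $\|Q_x^{1/2}\|_2$ is what produces the $O(1)$ remainder). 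Adding a small multiple of the $p=Q$ identity to the $p\equiv 1$ identity---the paper takes $\tfrac1{10}$---one obtains directly
\[
\frac{d}{dt}\int\Big(1+\tfrac1{10}Q\Big)v^2\,dx\lesssim 1,
\]
and since $1+\tfrac1{10}Q\sim 1$ this gives \eqref{badL2bd} with no bootstrap. Once $\|v(t)\|_2$ is controlled, \eqref{KATOSMOOTH} falls out of the $p=Q$ identity exactly as you outlined.
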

\begin{proof}
From (\ref{nifty}), with $p\equiv 1$, we have
\begin{equation}\label{mass1}
\frac {d}{dt} \left[ \int  v^2~dx\right]=    \int 12 Q Q_{x} v^2+4Q_x v^3 ~dx.
\end{equation} 

Using (\ref{nifty}) again, but with $p\equiv Q$, yields
\begin{align}\label{lastest}
\begin{split}
\frac{d}{dt} \left[ \int Q v^2~dx\right]=&    \int \Big[ Q_t v^2 -3 Q_x v_x^2+ Q_{xxx}v^2 \\
                                       &    -6Q^2 Q_x v^2-8Q Q_x v^3-3Q_xv^4 \\
                                        &   +12Q^2 Q_{x} v^2+4QQ_x v^3 \Big]~dx.
\end{split}
\end{align} 

Observe that the terms $Q^2Q_x v^2$, $QQ_x v^2$, $Q_t v^2$ and $Q_{xxx} v^2$ are all bounded above by a multiple of $Q_x^{1/2}v^2$. Furthermore, we have
\begin{equation}\label{YOUNG1}
\begin{split} \int Q_x v^2~dx\leq & \LPN{2}{Q_x^{1/2}}\LPN{2}{Q_x^{1/2}v^2}\\ \leq& \frac{1}{\tau}\int Q_x v^4~dx 
 + \tau \LPN{2}{Q_x^{1/2}}^2,
\end{split}
\end{equation} 
for $\tau >0$ by  Young's inequality.
Note also $\abs{QQ_x v^3}\leq \abs{Q_x v^3}$ and
\begin{align*}
\int \abs{Q_x v^3}~dx &\leq \LPN{2}{Q_x^{1/2}v}\LPN{2}{Q_x^{1/2}v^2}\\
&\leq \frac{1}{\kappa}\int Q_x v^4~dx + \kappa \int Q_x v^2~dx,
\end{align*}
for any $\kappa>0$.  Applying (\ref{YOUNG1}), we find that for any $\kappa>0$, there exists a constant $C_{\kappa}>0$, such that
\begin{equation}\label{YOUNG2}
\int \abs{Q_x v^3}~dx \leq \frac{1}{\kappa}\int Q_x v^4~dx + C_{\kappa}.
\end{equation}
Combining equations (\ref{mass1}-\ref{YOUNG2}) we obtain
\begin{equation}\label{equality}   
\frac {d}{dt} \left[ \int \left[v^2+\frac{1}{10} Q v^2\right]~dx\right]\lesssim 1.
\end{equation}

Since $v^2\lesssim v^2+ \frac{1}{10} Q v^2$ , we can conclude that for any $t\ge 0$
\begin{equation*}
\LPN{2}{v(t,\cdot)} \lesssim \LPN{2}{v(0,\cdot)}+t^{1/2}.
\end{equation*}

The proof of \eqref{KATOSMOOTH} follows from \eqref{badL2bd} and the estimate \eqref{lastest}.  
\end{proof}

As a consequence of the above estimates, we obtain the following well-posedness result for initial mKdV data near a kink. 

\begin{thm}\label{strongsol}
Let $s\in \NN$ satisfy $s\geq 1$.  Then there exists a unique, global, strong solution to (\ref{KINKmKdV}), for any initial data $v_0\in H^{s}$.  Moreover, for any $T>0$, the solution map from $H^s$ to $C_t ([0,T];H^s(\RR))$ is continuous.
\end{thm}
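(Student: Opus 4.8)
The plan is to construct the solution as a limit of smooth approximations controlled by energy estimates, and then to establish uniqueness and continuous dependence separately. I would first regularise \eqref{KINKmKdV}, say by a parabolic term, $v_t+v_{xxx}+\varepsilon\partial_x^4 v=2\partial_x(3Q^2v+3Qv^2+v^3)$, or by a frequency truncation of the nonlinearity, so that each regularised problem is an ODE in a Banach space with a global smooth solution; the whole difficulty is then to produce bounds that are uniform in the regularisation parameter and permit passage to the limit.

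For the uniform bound I would run an energy estimate at the top order. Differentiating \eqref{KINKmKdV} $s$ times and pairing $\partial_x^s v$ with the resulting equation in $L^2$, the dispersive term $v_{xxx}$ is skew-adjoint and drops out, while the cubic and kink nonlinearities are expanded by Leibniz. The only genuine top-order contribution, $v^2\partial_x^{s+1}v$ together with its analogues coming from $3Q^2 v$ and $3Q v^2$, is integrated by parts once, converting the apparent loss of a derivative into $\int (v^2)_x(\partial_x^s v)^2$; the remaining commutator terms are of lower order and, using the product estimates of Lemma \ref{techlem} and their higher-order Moser analogues, are bounded by a polynomial in $\HN s v$ with coefficients controlled by the smooth, bounded $Q$ and its derivatives. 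For $s\ge 2$ one has $H^s\hookrightarrow W^{1,\infty}$, so the dangerous factor $\norm{v}_{W^{1,\infty}}$ is itself controlled by $\HN s v$ and the estimate closes, yielding $\frac{d}{dt}\HN s v^2\le P(\HN s v)$ and hence a local solution on $[0,T_0]$ with $T_0=T_0(\HN s{v_0})$. The borderline case $s=1$, where the cubic term is not even controlled by the energy alone, I would treat by the dispersive (Kato) smoothing underlying \eqref{KATOSMOOTH}, or recover it a posteriori from the $s\ge 2$ theory by approximation together with the global $H^1$ bound below.

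Globalising is the main obstacle. Global $L^2$ control is already supplied by \eqref{badL2bd}, which rules out $L^2$ blow-up and anchors the bootstrap. To propagate $\HN 1 v$ for all time I would pass to the full mKdV solution $u=v+Q$ and exploit the Hamiltonian $\int(\tfrac12 u_x^2+\tfrac12 u^4)$: although infinite on the kink, a renormalised version obtained by subtracting the $Q$-contributions is finite and conserved, and it is exactly this conservation that cancels the derivative-losing cubic term $\int v v_x^3$ that a naive $H^1$ energy estimate cannot absorb, giving a time-uniform bound on $\HN 1 v$. For $s\ge 2$ a pure energy estimate still loses a derivative in the cubic term (at $s=2$, for instance, it only yields $\frac{d}{dt}\HN 2 v^2\lesssim \HN 2 v^{5/2}$, which permits finite-time blow-up), so here I would combine the global $H^1$ bound with the dispersive smoothing of the Airy group---running the local theory on time steps whose length depends only on lower norms and iterating---to conclude that $\HN s{v(t)}$ remains finite on every bounded interval. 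Together with the local theory this yields a global solution.

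Finally, uniqueness follows by a Gronwall argument: the difference $w=v_1-v_2$ of two solutions satisfies a linear equation whose coefficients are controlled by the now-bounded norms of $v_1,v_2$, so $\LPN 2 w$, and hence $w$, vanishes. Continuous dependence of the solution map on $H^s$ I would establish by the Bona--Smith method: approximate $v_0$ by mollified data $v_0^\delta\in H^{s+1}$, use the uniform $H^s$ bounds together with a difference estimate in a weaker norm, and exploit that the extra regularity of the approximants makes the difference term small, giving convergence in $C([0,T];H^s)$. The recurring technical point throughout---and the genuine obstacle---is the single derivative lost by the cubic nonlinearity, overcome at $H^1$ by the renormalised conservation law and at higher regularity by the dispersive smoothing behind \eqref{KATOSMOOTH}.
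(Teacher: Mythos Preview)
Your proposal is correct and aligns with what the paper invokes: the paper gives no self-contained argument here but simply defers to the local theory of Kenig--Ponce--Vega and the kink-adapted setting of Merle--Vega, observing that the $L^2$ bound \eqref{badL2bd} is the only new ingredient needed to globalise. Your sketch unpacks exactly those ingredients---local theory via energy/dispersive estimates, global $H^1$ control from the renormalised mKdV Hamiltonian on $u=v+Q$, persistence of higher regularity by iterating on time steps governed by lower norms, and Bona--Smith for continuous dependence---so the approaches coincide.
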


The proof of this theorem follows essentially from the same arguments
as those given in \cite{MR1211741} and \cite{MR1949297} -- since the
$L^2$ estimate \eqref{badL2bd} is available. 

We now establish global a priori bounds on the deviation of a solution
$u$ to \eqref{uKINKmKdV} from a translated kink, i.e. we aim to
establish bounds on $w:=u-\tanh(x-y(t))$ for some continuous function
$y:\RR^+\rightarrow \RR$ yet to be determined. We start by providing some motivation for the full argument given later.

 From \eqref{uKINKmKdV} we obtain
\begin{equation}\label{KINKmKdV2}
 \begin{array}{rl}  
w_t + w_{xxx} - 2 \partial_x (3\tanh^2(x-y(t))^2 w+ 3 \tanh(x-y(t))  w^2+w^3 )\hspace{-4cm} & 
\\
& 
= 
(\dot y + 2) \sech^2(x-y(t)) 
\end{array}.
\end{equation}

To define the position $y(t)$, we impose an orthogonality condition 
\begin{equation} \label{pos2}   \langle w , \psi(x-y) \rangle = 0, \end{equation} 
where for the moment we choose $\psi(x)= e^x \sech^2(x)$  for reason which will become clear 
below -- later we will actually choose $\psi(x)=\eta(x)\sech^2(x)$, where $\eta$ is defined by \eqref{etadef}. If $w$ is sufficiently close to the kink then $y$ exists and is unique 
by an application of the implicit function theorem -- see Lemma \ref{position} 
below\footnote{In Lemma \ref{position} the weight $\psi(x)=\eta(x)\sech^2(x)$ is actually used; however, the proof may be easily adapted to the case $\psi(x)= e^x \sech^2(x)$.}.  It is not hard to work out an equation for $\dot y +2$ by formally 
differentiating the condition \eqref{pos2} 
 with respect to $t$.

It is instructive to first consider the linearised problem at the $Q(x,t)=\tanh(x+2t)$ kink,
 in a frame moving with the kink:
\begin{equation}\label{lineareq}
\tilde w_t(t,x) -4\tilde w_x(t,x)+ \tilde w_{xxx}+6 \partial_x(\sech^2(x)\tilde w(t,x))= \alpha(t) \sech^2(x),
\end{equation} 
where $\alpha(t)$ is chosen as indicated above so that orthogonality condition 
\begin{equation}\label{motivortho}
 \langle \tilde w , e^{x} \sech^2(x) \rangle = 0 
\end{equation}
 is preserved over time. To obtain a formula for $\alpha$, we differentiate the above orthogonality condition with respect to $t$.  Thus we obtain
\[  \langle  \tilde w, (-4 \partial_x +\partial_{xxx} +6 \sech^2 \partial_x )e^x \sech^2(x)  \rangle 
 + \alpha(t) \langle \sech^2(x) , e^x \sech^2(x)  \rangle = 0, \]
which expresses $\alpha$ as a linear function of $\tilde w$.

 From \eqref{lineareq} we obtain
\begin{equation}\label{motiveq}  \frac{d}{dt} \int e^x \tilde w^2 dx = -3B(e^{x/2} \tilde w)+2\alpha(t)\int \tilde w(t,x) e^{x} \sech^2(x)~dx,
\end{equation} 
where here $B$ is the quadratic form defined by
\begin{equation}\label{newquadform}B(f):=\int f_x^2 + \bigg(\frac54 -2\sech^2(x)  - 4 \sech^2(x)\tanh(x)\bigg) f(x)^2~dx.\end{equation}
Note that the second term on the right hand side 
of \eqref{motiveq} vanishes due to \eqref{motivortho}.

According to the bound \eqref{assumption} of Proposition
\ref{quadform}, taking into account the choice of $\psi$, we have
\begin{equation*} B(e^{x/2} w) \ge \frac{1}{10} \HN 1 { e^{x/2} w }^2.
 \end{equation*}
As a consequence $\int e^x \tilde w^2 dx $ decays monotonically, and the time 
derivative controls $B(e^{x/2} \tilde w)$.

We will pursue a non-linear variant of this simple strategy.  The
weight $e^{x}$ will be replaced by the following bounded and monotone
weight function
 \begin{equation}\label{etadef}
  \eta_{R,\delta} (x)=\eta (x) =
 \tanh\left(\frac{x-R}{2}\right)+1+\delta.
\end{equation}  
We will also define $y$ in terms of an orthogonality condition, similar to \eqref{motivortho}.

\begin{lem} \label{position} 
There exists an $\epsilon >0$ and a unique 
analytic  function $y$ on $B^{L^2}(\tanh(\cdot),\epsilon)$ such that 
\[  \langle   f(\cdot)-  \tanh(\cdot-y(f)) ,  \eta( \cdot- y(f)) \sech^2(\cdot-y(f)) \rangle = 0, 
\]
and $y(\tanh(\cdot)) = 0 $. 
\end{lem}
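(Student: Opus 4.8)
The plan is to realise $y$ via the analytic implicit function theorem applied to a scalar orthogonality functional. Writing $f=\tanh(\cdot)+g$ with $g\in L^2$ small, define
\[
G(g,y):=\langle\, \tanh(\cdot)+g-\tanh(\cdot-y),\ \eta(\cdot-y)\sech^2(\cdot-y)\,\rangle .
\]
This pairing is well defined near $(0,0)$: the weight $\eta(\cdot-y)\sech^2(\cdot-y)$ lies in $L^1\cap L^2$ uniformly for $y$ in a bounded set, since $\eta$ is bounded and $\sech^2$ decays exponentially, while the defect splits as $g+\bigl(\tanh(\cdot)-\tanh(\cdot-y)\bigr)$, whose second summand is in $L^2$ because $\tanh(x)-\tanh(x-y)$ decays exponentially as $|x|\to\infty$. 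Moreover $G(0,0)=0$, since then the defect vanishes identically, matching the required normalisation $y(\tanh(\cdot))=0$.

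First I would verify that $G$ is analytic on a neighbourhood of $(0,0)$ in $L^2\times\RR$. The dependence on $g$ is affine --- the first variable enters only through the bounded linear functional $g\mapsto\langle g,\eta(\cdot-y)\sech^2(\cdot-y)\rangle$ --- and hence analytic. For the dependence on $y$ I would argue, exactly as for the analyticity in $\lambda$ used earlier in the paper, by holomorphic extension: both $\tanh$ and $\sech^2$ extend holomorphically to the strip $|\mathrm{Im}\,z|<\pi/2$, so that $y\mapsto\tanh(\cdot-y)$ and $y\mapsto\eta(\cdot-y)\sech^2(\cdot-y)$ extend to $L^2$-valued holomorphic maps for $y$ ranging over a small complex disc, with the exponential decay surviving the complex shift so that all integrals converge uniformly. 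Joint analyticity in $(g,y)$ then follows.

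The decisive point is non-degeneracy of $\partial_y G$ at the base point. Differentiating under the integral and using that the defect $\tanh(\cdot)-\tanh(\cdot-y)$ vanishes identically at $(g,y)=(0,0)$, every term in which $\partial_y$ lands on the weight drops out, leaving only
\[
\partial_y G(0,0)=\langle\,\sech^2(\cdot),\ \eta(\cdot)\sech^2(\cdot)\,\rangle=\int \eta(x)\sech^4(x)\,dx,
\]
where I used $\partial_y\bigl[-\tanh(x-y)\bigr]\big|_{y=0}=\sech^2(x)$. Since $\eta(x)=\tanh\!\left(\tfrac{x-R}{2}\right)+1+\delta\ge\delta>0$, this integral is strictly positive, so $\partial_y G(0,0)\neq0$. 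The analytic implicit function theorem then yields $\epsilon>0$ and a unique analytic function $y=y(f)$ on $B^{L^2}(\tanh(\cdot),\epsilon)$ with $G\bigl(f-\tanh(\cdot),y(f)\bigr)=0$ and $y(\tanh(\cdot))=0$, which is precisely the assertion. The only genuinely technical step is the $L^2$-valued analyticity in the shift $y$; the non-degeneracy computation and the appeal to the implicit function theorem are routine.
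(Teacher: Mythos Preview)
Your proof is correct and follows essentially the same route as the paper: define the scalar orthogonality functional, check it vanishes at the base point, compute that $\partial_y$ at the base point equals $\int\eta(x)\sech^4(x)\,dx>0$, and invoke the implicit function theorem. You are in fact more careful than the paper about justifying well-definedness of the pairing and analyticity in $y$ via holomorphic extension; the paper simply asserts these and proceeds directly to the derivative computation.
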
 
\begin{proof} 
Consider the mapping 
\[ F:B^{L^2}(\tanh(\cdot),\epsilon)\times \RR\rightarrow\RR \]
-- here $B^{L^2}(\tanh(\cdot),\epsilon)$ denotes by an abuse of notation 
the set of sums of $L^2$ functions of norm $<\varepsilon$ and $\tanh$ --
 defined by
\[ F(f,y)=\ip{ f - \tanh(\cdot-y) , \eta(\cdot-y)\sech^2(\cdot-y)}. 
\]
Clearly $F(\tanh(\cdot),0)=0$. Differentiating with respect to $y$ at $f:=\tanh(\cdot)$ we obtain 

\begin{equation*} \left. \frac{d}{dy} F(\tanh(\cdot),y) \right|_{y:=0} 
=     
\langle \sech^2(\cdot-y) ,\eta(\cdot-y) \sech^2(\cdot-y)\rangle >0 . 
\end{equation*}
The implicit function theorem then yields the assertion. 
\end{proof} 

\begin{thm}\label{stab} 
There exists a $\delta >0$
such that if $u$ the  solution to \eqref{uKINKmKdV} of Theorem \ref{strongsol} with initial data  satisfying $u(0,\cdot) - \tanh(\cdot) \in H^1(\mathbb{R})$  and $\LPN 2 {u(0,\cdot)-\tanh(\cdot)}<\delta$, then there is a continuous function $y:[0,\infty)\to\RR$ such that 
\begin{equation}
\LPN{2}{ u(t,.)  - \tanh(x-y(t)) } \lesssim \LPN 2 {u(0,\cdot)-\tanh(\cdot) }.\label{apriorimkdv}
\end{equation}
Moreover, writing $w:=u-\tanh(\cdot-y(t))$ we have the estimates
\begin{equation}\label{virial}
\int_0^\infty \norm{\eta_x(\cdot-y(t))^{1/2} w}_{H^1}^2~dt\lesssim \LPN{2}{u(0,\cdot)-\tanh(\cdot)}^2,
\end{equation}
and
\begin{align}\label{dotyest3}
\begin{split}
\abs{\dot{y}+2}\lesssim &\LPN 2 {\eta_x(\cdot-y(t))^{1/2}w}+\\&\qquad
\LPN 2 {\eta_x(\cdot-y(t))^{1/2}w} \norm{\eta_x(\cdot-y(t))^{1/2} w}_{L^{\infty}}^2.
\end{split}
\end{align}
\end{thm}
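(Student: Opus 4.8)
The plan is to run a modulational energy argument organized around a monotone weighted $L^2$ functional, upgrading the linearised computation \eqref{motiveq}--\eqref{newquadform} to the nonlinear setting with the bounded weight $\eta(\cdot-y)$ of \eqref{etadef} in place of $e^x$. I work with the smooth solutions furnished by Theorem \ref{strongsol} (so all integrations by parts are justified) and recover the general case by approximation. First I use Lemma \ref{position} to define $y(t)$ through the orthogonality condition $\ip{w,\eta(\cdot-y)\sech^2(\cdot-y)}=0$; since $\LPN 2 {u(0,\cdot)-\tanh(\cdot)}<\delta$ is small, $y(0)$ is fixed, and I set up a continuity/bootstrap argument on the maximal interval where $\LPN 2 w$ stays below a threshold $\epsilon$. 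On that interval I introduce the energy $E(t):=\int\eta(x-y(t))\,w(t,x)^2\,dx$; because $\eta\geq\delta>0$ and $\eta\leq 2+\delta$, control of $E$ is equivalent to control of $\LPN 2 w^2$, so a monotonicity bound for $E$ will yield \eqref{apriorimkdv} once the bootstrap closes.

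The next step is the energy identity. Differentiating $E$ in time and substituting \eqref{KINKmKdV2} for $w_t$ (an integration by parts parallel to the one proving \eqref{nifty}, with the additional term $-\dot y\int\eta_x w^2$ produced by the moving weight) gives, with $Q=\tanh(\cdot-y)$, $Q_x=\sech^2(\cdot-y)$, and $\eta,\eta_x,\eta_{xxx}$ evaluated at $\cdot-y$,
\begin{align*}
\dot E = {}&-\dot y\int \eta_x w^2 - 3\int \eta_x w_x^2 + \int\big(\eta_{xxx} - 6\eta_x Q^2 + 12\eta Q Q_x\big)w^2 \\
&+ 2(\dot y + 2)\ip{w,\eta\sech^2(\cdot-y)} - 8\int \eta_x Q w^3 + 4\int \eta Q_x w^3 - 3\int \eta_x w^4.
\end{align*}
The crucial observation is that the term $2(\dot y+2)\ip{w,\eta\sech^2(\cdot-y)}$ vanishes identically by the orthogonality condition, which is exactly why the weight $\psi=\eta\sech^2$ was chosen. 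I then write $-\dot y = 2 - (\dot y+2)$, folding $2\int\eta_x w^2$ into the quadratic form and relegating $-(\dot y+2)\int\eta_x w^2$ to the error terms.

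Up to the localisation supplied by the weight, the resulting quadratic part is a shifted copy of the form $B$ of \eqref{newquadform}, so I invoke the coercivity estimate (Proposition \ref{quadform}), valid precisely on the orthogonal complement of the resonant direction singled out by \eqref{pos2}, to obtain
\[
\dot E \leq -c\,\norm{\eta_x(\cdot-y)^{1/2}w}_{H^1}^2 + (\text{cubic and modulation errors}).
\]
The quartic term $-3\int\eta_x w^4\leq0$ is favourably signed; the cubic terms are handled by Hölder's inequality in the split $L^\infty\!\cdot\! L^2\!\cdot\! L^2$ together with Gagliardo--Nirenberg, after noting $\eta Q_x\lesssim\eta_x$ pointwise, which bounds each by $\lesssim \LPN 2 w\,\norm{\eta_x^{1/2}w}_{H^1}^2$; the extra factor $\LPN 2 w\lesssim\delta^{1/2}$ lets them be absorbed into the coercive term. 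Integrating then yields both the monotone decay of $E$, hence \eqref{apriorimkdv} via $\eta\geq\delta$, and the spacetime bound $\int_0^\infty\norm{\eta_x^{1/2}w}_{H^1}^2\,dt\lesssim E(0)\lesssim\LPN 2{u(0,\cdot)-\tanh(\cdot)}^2$, which is \eqref{virial}; this in turn keeps $\LPN 2 w$ below $\epsilon$ and closes the bootstrap for $\delta$ small.

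For \eqref{dotyest3} I differentiate the orthogonality condition in $t$, substitute \eqref{KINKmKdV2}, and solve for $\dot y+2$. The denominator $\ip{\sech^2(\cdot-y),\eta\sech^2(\cdot-y)}$ is bounded below, the self-coupling term (the $\dot y$ multiplying an $O(\LPN 2 w)$ pairing) is absorbed by smallness, the linear pairings are against $\sech^2$-localised functions dominated pointwise by $\eta_x^{1/2}(\cdot-y)$ and so controlled by $\LPN 2{\eta_x^{1/2}w}$, while the nonlinear pairings give the $\LPN 2{\eta_x^{1/2}w}\norm{\eta_x^{1/2}w}_{L^\infty}^2$ contribution; this delivers \eqref{dotyest3} and shows $y$ is continuous. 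I expect the main obstacle to be securing the clean differential inequality $\dot E\leq -c\norm{\eta_x^{1/2}w}_{H^1}^2$ with \emph{no} positive multiple of $E$ on the right — unlike the merely polynomial-in-$t$ bound of Lemma \ref{localbd}, a uniform-in-time estimate forbids any Gronwall-type growth. This requires simultaneously the coercivity of the shifted form $B$ matched to the orthogonality direction (Proposition \ref{quadform}), the genuine absorption of the cubic terms via the extra $\LPN 2 w$ factor against the favourable quartic, and the treatment of the modulation feedback $(\dot y+2)\int\eta_x w^2$ as a higher-order term through \eqref{dotyest3}; the delicate point is that $\dot y$ couples the energy and modulation estimates, a circularity resolved only by the bootstrap.
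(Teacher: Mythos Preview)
Your strategy is exactly the paper's: modulate via Lemma \ref{position} with $\psi=\eta\sech^2$, differentiate $E(t)=\int\eta(x-y)w^2$, kill the $(\dot y+2)$ source term by orthogonality, extract a coercive quadratic form in $\eta_x^{1/2}w$, absorb the cubic terms using the extra $\LPN 2 w$ factor against the signed quartic, and close by bootstrap; your derivations of \eqref{virial} and \eqref{dotyest3} are also as in the paper.

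The one step you materially underestimate is the coercivity. The quadratic part of $\dot E$ is \emph{not} $-3B(\eta_x^{1/2}w)$ for the form $B$ of \eqref{newquadform}; what emerges is a perturbed form $B_{\varepsilon,R}$ whose potential differs from that of $B$ by terms of size $O(\varepsilon e^{R}+e^{-R})$, where $R,\varepsilon$ are the parameters in the weight $\eta=\eta_{R,\varepsilon}$ of \eqref{etadef}. Moreover, after the substitution $f=\eta_x^{1/2}w$ the orthogonality $\langle w,\eta\sech^2(\cdot-y)\rangle=0$ becomes $\langle f,\eta_x^{-1/2}\eta\sech^2\rangle=0$, whereas Proposition \ref{quadform} supplies coercivity only modulo the direction $e^{x/2}\sech^2$. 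The paper must therefore compare both the quadratic forms and the two directions quantitatively, and this is what fixes the weight parameters (the paper takes $R=10$, $\varepsilon=e^{-20}$). Since the margin in Proposition \ref{quadform} is a specific number ($\frac{1}{10}$), you cannot treat this as a soft perturbation; the parameters $R$ and $\varepsilon$ are coupled and must be chosen so that both discrepancies are small simultaneously. Your phrase ``up to the localisation supplied by the weight'' hides precisely this computation. A minor point: you are overloading $\delta$ for both the smallness hypothesis of the theorem and the additive constant in $\eta$; these are different parameters and it is the latter that controls the lower bound $\eta\geq\varepsilon$ you use to pass from $E$ back to $\LPN 2 w^2$.
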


\begin{proof}
First define
\begin{equation} \label{psi}  \psi(x) = \eta(x) \sech^2(x). \end{equation}

Our aim is to find a function $y$ satisfying the orthogonality condition:
\begin{equation}\label{ortho}
\ip{  u(t,\cdot)-\tanh(\cdot-y(t)) , \psi (x-y(t)) }=0
\end{equation}
for all $t\ge 0$ such that $y(0)=y_0$, where $y_0$ is given by Lemma
\ref{position}.  The existence of such a function, at least initially,
for $t\in [0,T]$, for some $T>0$, is a consequence of Lemma
\ref{position} and the fact $u-\tanh(x+2t)\in C(\RR;H^1(\RR))$.

Now define $w(t,x)$ by
\[ w (x,t) =  u(t,x)- \tanh(x-y(t)), \]
from which we obtain 
\begin{multline}\label{KINKmKdV3} 
w_t + w_{xxx}-2 \partial_x( 3 \tanh^2(x-y) w + 3 \tanh(x-y) w^2+ w^3)=\\ 
(2+\dot y)\sech^2(x-y). 
\end{multline} 

Again by perhaps taking a smaller $T$ if necessary, we can assume for $t\in [0,T]$
\[\LPN 2 {w}\leq 2\epsilon.\]

By (\ref{KINKmKdV3}) and (\ref{ortho}) we obtain:
\begin{multline}\label{change}
\frac {d}{dt} \int \eta(x-y) w^2~dx=\\
\int \bigg[ -3\eta_x(x-y) w_x^2+ \bigg(-\dot y\eta_x(x-y) + \eta_{xxx}(x-y) \\-6\tanh^2(x-y)\eta_x(x-y) + 12\eta(x-y)\tanh(x-y)\sech^2(x-y)\bigg)w^2\\
+(4\eta(x-y) \sech^2(x-y)-8\eta_x(x-y)\tanh(x-y))w^3-3\eta_x(x-y)w^4\bigg]~dx.
\end{multline}
Rewriting the quadratic part of the above equation by using the identity $\sech^2(x)+\tanh^2(x)=1$ numerous times, together with the observation
\begin{equation}\label{uiop}
\eta_{xxx}=-3\eta_x^2+\eta_x,
\end{equation}
along with the trivial identity $\dot y=-2+(\dot y+2)$ we obtain
\begin{multline}\label{change2}
\frac {d}{dt} \int \eta(x-y) w^2~dx=\\
\int \bigg[ -3\eta_x(x-y) w_x^2+ \bigg(-3 +6\sech^2(x-y)+\\24\eta(x-y)\tanh(x-y)\sech^2(x-y)(\eta_x(x-y))^{-1} \bigg)\eta_x(x-y)w^2\\
 -(2 +\dot y)\int \eta_x(x-y) w^2~dx-\int 3(\eta_x(x-y))^2 w^2~dx\\   
+(4\eta(x-y) \sech^2(x-y)-8\eta_x(x-y)\tanh(x-y))w^3-3\eta_x(x-y)w^4\bigg]~dx.
\end{multline}

Now observe
\begin{align*}
\int \left(\eta_x^{1/2}w\right)^2_x~dx&=\int\left[\eta_x w_x^2+\frac{\eta_{xx}^2}{4\eta_x}w^2+\eta_{xx}ww_x\right]~dx\\
&=\int\left[\eta_x w_x^2+\left(\frac{\eta_{xx}^2}{4\eta_x}-\frac12\eta_{xxx}\right)w^2\right]~dx\\
&=\int\left[\eta_x w_x^2+\left({\eta_{x}^2} -\frac14\eta_x\right)w^2\right]~dx,
\end{align*}
where in the last line we used \eqref{uiop} in addition with the identity
\[
\frac{\eta_{xx}^2}{\eta_x}=\eta_x-2\eta_x^2.
\]
We define the quadratic form:
\begin{multline*}
B_{\varepsilon, R}(f):=\int f_x^2 + \bigg(\frac54 -2\sech^2(x)  \\- 8 \sech^2(x)\tanh(x)\cosh^{2}\left(\frac{x-R}{2}\right) \left(1+\varepsilon+\tanh\left(\frac{x-R}{2}\right)\right)  f(x)^2~dx
\end{multline*}
and rewrite the equation (\ref{change2}) as
\[
\begin{split}
\frac {d}{dt} \int  \eta(x-y) & w^2~dx =    -3B_{\varepsilon, R}(\eta_x(x-y)^{1/2}w)
 -(2 +\dot y)\int \eta_x(x-y) w^2~dx
 \\ & +\int (4\eta(x-y) \sech^2(x-y)-8\eta_x(x-y)\tanh(x-y))w^3~dx 
\\ &  -\int 3\eta_x(x-y)w^4 ~dx.
\end{split} 
\]
We observe that $\eta_x$ is positive and hence the last line is non-positive. We will now estimate the cubic term:
\begin{align*} 
\Big|\int (4\eta(x-y) \sech^2(x-y)-&8\eta_x(x-y)\tanh(x-y))w^3 dx\Big| \\
&\lesssim C_R   \int \eta_x |w|^3 dx 
 \\& \lesssim C_R    \LPN 2 w  \LPN{4}{ \eta_x^{1/2} w }^2 
\\ &\lesssim C_R   \LPN 2 w \HN{1}{ \eta_x^{1/2} w}^2 .
\end{align*}

We turn to   bounding $\abs{\dot y +2}$. Note we have from \eqref{KINKmKdV3} and \eqref{psi}:
\begin{multline}\label{dotyeq}
0= \frac{d}{dt} \langle w, \psi(x-y) \rangle = \int \bigg[w\psi_{xxx}(x-y)\\
-2 ( 3 \tanh^2(x-y) w + 3\tanh(x-y) w^2+ w^3)\psi_x(x-y)\\
+ (2+\dot y) \sech^2(x-y)\psi(x-y)   -\dot y    w(x)\psi_x(x-y)\bigg]~dx.
\end{multline}

Thus we obtain
\begin{align}\label{dotyest}
\begin{split}
\abs{\dot{y}+2}&\lesssim C_R (1+\abs{\dot y+2})\LPN 2 {w}+\LPN 2 {\eta_x^{1/2}w}^2+\LPN 2 {\eta_x^{1/2}w} \norm{\eta_x^{1/2} w}_{L^{\infty}}^2 \\
&\lesssim C_R \LPN 2 {w}+\LPN 2 {w} \norm{\eta_x^{1/2} w}_{L^{\infty}}^2,
\end{split}
\end{align}
where in the last line we use the fact that $\LPN 2 w \ll 1$.

Collecting the above estimates together, we obtain:
\begin{equation}\label{pos}
\frac {d}{dt} \int \eta(x-y) w^2 dx \leq  -3B_{\varepsilon, R}(\eta_x(x-y)^{1/2}w)+\Lambda \LPN 2 w \norm {\eta_x^{1/2}w}_{H^1}^2,
\end{equation}
for some constant $\Lambda$ depending on $R$, which will be a positive number. 

We now compare the quadratic form $B_{\varepsilon, R}$ with the quadratic form $B$ from Appendix B: 
\[B(f)=\int f_x^2 + \bigg(\frac54 -2\sech^2(x)  - 4 \sech^2(x)\tanh(x)\bigg) f(x)^2~dx.\]
The difference $V(x)$ of the potentials in the quadratic forms is 
\[ 4 \sech^2(x)\tanh(x)\left(2\cosh^{2}\left(\frac{x-R}{2}\right) \left(1+\varepsilon+\tanh\left(\frac{x-R}{2}\right)\right)- 1\right).    \]
Observe that 
\begin{equation*}
\cosh^2\left(\frac{x-R}{2}\right)\left(1+\tanh\left(\frac{x-R}{2}\right)\right)= \frac{1}{2}(e^{x-R}+1)
\end{equation*}
hence the difference $V$ can be bounded by 
\[ \abs{V} \le  8 \varepsilon \sech^2(x) \cosh^2\left(\frac{x-R}2\right) + 4\sech^2(x) e^{x-R}   
\le  16 \varepsilon e^{R} + 8 e^{-R}.
 \]
Thus we obtain
\begin{equation} \label{diffquad} 
|B(f) - B_{\varepsilon,R}(f)| \le \left(16 \varepsilon e^R   +4 e^{-R}\right) \LPN 2 f ^2. 
\end{equation}
Now define the modified quadratic form
\begin{equation*}
\hat{B}_{\varepsilon, R}(f):=  B_{\varepsilon, R}(f)+2 e^{R}\left\langle \eta_x^{-1/2}\eta\sech^2, f\right\rangle^2.
\end{equation*}
Observe that
\[  \eta_{R,\varepsilon} - \eta_{R,0} = \varepsilon,\]
and
\begin{equation*}
\cosh\left(\frac{x-R}{2}\right)\left(1+\tanh\left(\frac{x-R}{2}\right)\right)= e^{\frac{x-R}{2}},
\end{equation*}
from which it follows that
\[  (e^{R/2} \eta_x^{-1/2} \eta(x)  -e^{x/2}) \sech^2(x) =   
\sqrt{2} \varepsilon  e^{R/2} \cosh\left(\frac{x-R}2\right) \sech^2(x) \le 2 \varepsilon  
e^R,     \]
which yields the estimate
\[ \abs{e^{R/2} \ip{ \eta_x^{-1/2}\eta\sech^2, f}- \ip{  e^{x/2}\sech^2(x) ,f}} \le 2 \varepsilon e^R\LPN 2 f . \]

By estimate \eqref{diffquad} 
 together with the estimate \eqref{assumption} we obtain: 
\begin{lem} With $R=10$ we have  for all $f\in H^1$
\[ \hat{B}_{e^{-2R},R} (f)  \geq \frac{1}{20}  \HN{1}{ f }^2. \]
\end{lem} 
We now fix $R=10$ and set $\varepsilon:=e^{-2R}=e^{-20}$ -- noting that we
only require the existence of $R$ such that the conclusion of the Lemma holds,
with its size being neither optimal nor important.

   If we for the moment assume that
\[ \sup_{0\le t \le T} \LPN 2 {w(t,\cdot)}  \le \frac1{80 \Lambda}, \]
then it follows from the above lemma, and the orthogonality condition (\ref{ortho}) that we can control the second term in the equation (\ref{pos}) with the first term, which implies
\[\int \eta(x-y(t)) w(t,x)^2  dx \le \int \eta(x-y(0)) w(0,x)^2 dx.\] 

Hence we obtain 
\begin{align*}
\varepsilon  \LPN{2} {w(t,\cdot)}^2\le&    \int \eta (x-y)w(t,x)^2  dx \\\le&
 \int \eta(x-y(0)) w(0,x)^2 dx  \\\le&  (1+ \varepsilon)   \LPN{2}{ w(0,\cdot)}^2 
\end{align*}
and thus 
\[ \LPN{2}{ w(t,\cdot)} \le  2 e^{R} \LPN{2}{ w(0,\cdot)}.  
\]

A continuity argument gives the desired global bound provided (recall $\varepsilon = e^{-2R}$)
\[ \LPN{2}{ w(0,\cdot)} \le  \frac1{120 \Lambda} e^{-R}. \]
\end{proof}

\begin{cor}\label{highernorm}
Suppose $u$ satisfies the conditions in the above Theorem, furthermore assume $u(0,\cdot)- \tanh(\cdot)\in H^s$, where $s$ is a positive integer, then there exist $C>0$ and $N >0$  depending only on $s$  such that
\begin{equation}  \label{highernormineq}
\begin{split} 
\HN{s}{u(t,\cdot)-\tanh(x-y(t))}\le &  C \HN{s}{u(0,\cdot)-\tanh(\cdot)}
\\ & \times \left(1+\HN{s}{u(0,\cdot)-\tanh(\cdot)}\right)^N.
\end{split} 
\end{equation} 
\end{cor}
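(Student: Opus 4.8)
The plan is to avoid a direct energy estimate on $w:=u-\tanh(\cdot-y(t))$ and instead transfer the problem to KdV near the trivial solution through the Miura map, where the higher conserved energies supply a uniform-in-time bound, and then to transfer that bound back to $w$ using the $H^{s}$ estimate of Remark \ref{wvest}. Throughout I would take $y(t)$ and the uniform bound $\LPN{2}{w(t,\cdot)}\lesssim \LPN{2}{u(0,\cdot)-\tanh(\cdot)}$ from Theorem \ref{stab}, whose hypotheses the Corollary assumes.

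First I would form the Miura image of $u$. Writing $Q=\tanh(\cdot-y(t))$ and $v:=w_x+2Qw+w^2$, a direct computation gives $\miura(u)=\miura(Q+w)=1+v$, since $\miura(\tanh(\cdot-a))\equiv 1$ for every shift $a$. By \eqref{miuraid} the function $\miura(u)$ solves KdV, so $1+v$ is a KdV solution; applying the Galilean transformation with $h=6$ removes the constant background and exhibits $v$, in the shifted frame, as a KdV solution lying near $0$. The forward product estimates of Lemma \ref{techlem} bound the initial datum by $\HN{s-1}{v(0,\cdot)}\lesssim \HN{s}{w(0,\cdot)}\bigl(1+\HN{s}{w(0,\cdot)}\bigr)$, which is small in $H^{s-1}$ since $s\ge 1$ forces $s-1\ge 0$ and the hypotheses of Theorem \ref{stab} make $w(0,\cdot)$ small in $L^2$.

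Next I would invoke the stability of the trivial KdV solution in $H^{s-1}$ for the nonnegative integer $s-1$, furnished by the higher conserved energies as indicated in the introduction: for small data these energies are coercive about $0$ and exactly preserved by the flow, yielding a uniform bound $\HN{s-1}{v(t,\cdot)}\lesssim \HN{s-1}{v(0,\cdot)}\bigl(1+\HN{s-1}{v(0,\cdot)}\bigr)^{N_1}$. Finally I would run the Miura map backwards through Remark \ref{wvest} (with $s$ there replaced by $s-1$), namely $\HN{s}{w}\lesssim \bigl(1+\LPN{2}{w}+\HN{s-1}{v}\bigr)^{N_2}\bigl(\HN{s-1}{v}+\LPN{2}{w}\bigr)$, and substitute the uniform bounds on $\LPN{2}{w(t,\cdot)}$ and $\HN{s-1}{v(t,\cdot)}$. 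Chaining these inequalities produces \eqref{highernormineq}, the polynomial factor $(1+\HN{s}{u(0,\cdot)-\tanh(\cdot)})^N$ arising from composing the polynomial losses in the three steps.

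The main obstacle is the uniform-in-time control of $v$ near $0$: one must verify that the relevant conserved KdV energy is a coercive quadratic form in $H^{s-1}$ once the higher-order-in-$v$ terms are absorbed for small data, so that exact conservation becomes a time-independent $H^{s-1}$ bound. This is precisely where the integrable structure is essential and what allows a uniform rather than exponentially growing estimate. By contrast, a direct weighted energy estimate on $\partial_x^s w$ mirroring the $L^2$ computation of Theorem \ref{stab} is tempting but runs into a genuinely non-localized cubic contribution of the shape $\int \eta\, w\,w_x\,(\partial_x^s w)^2$, whose coefficient is not time-integrable, so that route does not obviously close to a uniform bound without the integrable-energy input supplied here by the Miura reduction. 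A secondary, purely bookkeeping difficulty is tracking the exponents in the polynomial factors through the forward estimate, the KdV energy bound, and Remark \ref{wvest}.
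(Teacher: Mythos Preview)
Your proposal is correct and follows essentially the same route as the paper: define $v$ as the Miura image of $u$ (after the Galilean shift removing the constant background), control $\HN{s-1}{v(t,\cdot)}$ uniformly in time via the higher KdV conserved energies (Appendix \ref{conserved}), and then invert through Remark \ref{wvest} together with the $L^2$ bound on $w$ from Theorem \ref{stab}. One minor clarification: the coercivity of the KdV energies does not actually require smallness of $v$---the polynomial structure of $T^{(k)}$ gives $\HN{s-1}{v(t,\cdot)}\lesssim \HN{s-1}{v(0,\cdot)}(1+\HN{s-1}{v(0,\cdot)})^{N'}$ for arbitrary data, which is why the final bound \eqref{highernormineq} is polynomial rather than merely local-in-size.
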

\begin{proof}
Let $w$ be as in Theorem \ref{stab} and define:
\begin{equation}\label{uwrel}
v(t,x):= w(t,x-6t)^2+2w(t,x-6t)\tanh(x-y(t)-6t)+w_x(t,x-6t),
\end{equation}
i.e. $v$ is a solution to \eqref{KdV}.  

Note as a consequence of infinite conservation laws associated with
the KdV equation (see 
Appendix \ref{conserved}), we have
\begin{align}\label{zut}
\begin{split}
\HN {s} {v(t,\cdot)} &\lesssim \HN {s-1} {v(0,\cdot)}\left(1+\HN {s-1} {v(0,\cdot)}\right)^{N^{\prime}}\\
&\lesssim \HN {s} {w(0,\cdot)}\left(1+\HN {s} {w(0,\cdot)}\right)^{N^{\prime\prime}}
\end{split}
\end{align}
for some positive integers $N^{\prime}$ and $N^{\prime\prime}$.

Then from Remark \ref{wvest} at the end of Section \ref{invmiurasec}, Theorem \ref{stab} and \eqref{zut} we obtain \eqref{highernormineq}.
\end{proof}

We now consider the problem of asymptotic stability of the mKdV equation near a kink.  We will require an additional weight function:
\[\phi_{x_0,A}(t,x)=\phi(t,x)=1+\tanh\left(\frac{x-x_0+\gamma t}{A}\right).\]  

\begin{prop}  \label{stableftright2} 
Let $\gamma < 6$, then there exists $\delta, A>0$  such that if  $u$ is the  solution to \eqref{uKINKmKdV} with initial data   satisfying $u(0,\cdot) - \tanh(\cdot) \in H^1(\mathbb{R})$  and $\LPN 2 {u(0,\cdot)-\tanh(\cdot)}<\delta$, and $x_0\in \RR$ we have the bounds
\begin{equation}\label{rightofsol2}
\int \eta(x-y(t))\phi_{A,x_0}(t,x) w(t,x)^2~dx  \lesssim \int \eta(x-y(0)) \phi_{A,x_0}(0,x)w(0,x)^2~dx ,
\end{equation}
where $t>0$, $w:=u-\tanh(\cdot-y)$ and $y$ references to the continuous function constructed in Theorem \ref{stab}.  Moreover, we have the following smoothing estimate:
\begin{equation}\label{othersmooth}
\int_0^{\infty} \HN{1}{(\eta(x-y)\phi_{A,x_0})_x^{1/2}w}^2~dt\lesssim \int
\eta(x-y(0)) \phi_{A,x_0}(0,x)w(0,x)^2~dx.
\end{equation} 
\end{prop}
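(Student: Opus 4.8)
The plan is to reproduce the weighted energy computation underlying Theorem \ref{stab}, but with the moving weight $\phi=\phi_{x_0,A}$ inserted, and then to exploit the transport structure of the equation far from the kink to absorb the terms created by the explicit time dependence of $\phi$. Concretely, I would differentiate $\int \eta(x-y)\,\phi\, w^2\,dx$ in time, substitute the equation \eqref{KINKmKdV3} for $w$, and integrate by parts exactly as in the derivation of \eqref{change2}--\eqref{pos}. The dispersive term $w_{xxx}$ then produces the good contribution $-3\int(\eta\phi)_x w_x^2\,dx$, where $(\eta\phi)_x=\eta_x\phi+\eta\phi_x\ge 0$; the explicit time derivative of $\phi$ contributes $\int\eta\,\phi_t\,w^2\,dx=\gamma\int\eta\,\phi_x\,w^2\,dx$ (using $\phi_t=\gamma\phi_x$); and the quadratic part of the transport/potential term $-6\,\partial_x(\tanh^2(x-y)w)$ contributes, after integration by parts, $-6\int(\eta\phi)_x\tanh^2(x-y)\,w^2\,dx$ together with genuinely kink-localized terms coming from $(\tanh^2)_x=2\tanh\sech^2$. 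All remaining $\phi$-derivatives ($\phi_{xx},\phi_{xxx}$ and the mixed $\eta_{xx}\phi_x,\eta_x\phi_{xx}$ terms) are of size $O(A^{-2})$ relative to $\phi_x\sim A^{-1}$ and are harmless for $A$ large.

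The heart of the argument is to show that $-3\int(\eta\phi)_x w_x^2\,dx+\int P\,w^2\,dx\le -c\,\HN{1}{(\eta\phi)_x^{1/2}w}^2$ up to absorbable errors, where $P$ denotes the collected $w^2$-coefficient. I would split this according to whether $\eta_x$ or $\phi_x$ dominates. Near the kink the weight $\phi$ varies slowly (on scale $A$), so $\phi\approx\phi(t,y(t))$ is essentially constant there, and the $\eta_x$-weighted part reduces, up to $O(A^{-1})$ errors, to $\phi(t,y(t))$ times the quadratic form $B_{\varepsilon,R}(\eta_x(x-y)^{1/2}w)$ of Theorem \ref{stab}; invoking the coercivity $\hat B_{\varepsilon,R}\ge\frac1{20}\HN{1}{\cdot}^2$ established there, together with the orthogonality \eqref{ortho} (inherited from the construction of $y$ in Theorem \ref{stab}), controls this piece. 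In the region where $\phi_x$ is supported --- which, for $t$ large, sits away from the kink so that $\tanh^2(x-y)\approx 1$ and $\sech^2(x-y)\approx 0$ --- the $\phi_x$-weighted quadratic terms combine to
\[ -3\int \eta\phi_x w_x^2\,dx + \Big(\gamma-6+O(A^{-1})\Big)\int \eta\phi_x w^2\,dx, \]
which is negative definite precisely because $\gamma<6$ and $A$ is large.

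The value $6$ is dictated by the equation: linearizing mKdV about the constant asymptotic values $\pm1$ of the kink gives $w_t-6w_x+w_{xxx}=0$, whose group velocities $-3k^2-6$ are all $\le -6$, so perturbations sitting away from the kink are transported leftward at speed at least $6$; a weight traveling leftward at the slower speed $\gamma<6$ therefore sees these perturbations flush out of its support, which is the source of the monotone decrease. The cubic and quartic terms are treated exactly as in Theorem \ref{stab}: by Hölder and Gagliardo--Nirenberg they are bounded by $\LPN{2}{w}\,\HN{1}{(\eta\phi)_x^{1/2}w}^2$, and since $\LPN{2}{w}$ is globally small by \eqref{apriorimkdv} they are absorbed into the good term. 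The $\dot y$-contributions are handled by writing $\dot y=-2+(\dot y+2)$, folding the $-2$ into the main form and estimating $\dot y+2$ via \eqref{dotyest3}; the forcing term $(2+\dot y)\sech^2(x-y)$ is dealt with using the orthogonality \eqref{ortho}, leaving only an $O(A^{-1})$ error because $\phi$ is almost constant across the kink.

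Assembling these estimates yields $\frac{d}{dt}\int\eta(x-y)\,\phi\,w^2\,dx\le -c\,\HN{1}{(\eta\phi)_x^{1/2}w}^2$ up to absorbable errors; dropping the good term gives the monotonicity \eqref{rightofsol2}, and integrating in time gives the smoothing estimate \eqref{othersmooth}. The main obstacle I anticipate is the transition region where the support of $\phi_x$ overlaps the kink (relevant for small $t$, or when $x_0$ is close to $y(0)$), since there neither the coercivity near the kink nor the $\gamma<6$ transport sign applies cleanly; the resolution is to choose $A$ large enough that $\phi$ is slowly varying on the kink scale, so that the kink-localized coercive estimate of Theorem \ref{stab} and the far-field transport estimate can be glued together uniformly in $x_0$ and $t$ with only $O(A^{-1})$ losses.
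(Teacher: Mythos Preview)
Your strategy coincides with the paper's: differentiate $\int \eta(x-y)\,\phi\, w^2\,dx$, control the $\phi\eta_x$-weighted quadratic piece by the coercivity of $B_{\varepsilon,R}$ from Theorem~\ref{stab}, control the $\eta\phi_x$-weighted piece by the sign $\gamma-6<0$ (using $\tanh^2(x-y)=1-\sech^2(x-y)$ and $\sech^2(x-y)\phi_x\lesssim A^{-1}\phi\,\eta_x$), and absorb cubic/quartic terms via smallness of $\LPN{2}{w}$.

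The gap is your claim of a clean monotone inequality $\frac{d}{dt}\int\eta\phi\, w^2\le -c\,\HN{1}{(\eta\phi)_x^{1/2}w}^2$. The $(\dot y+2)$-contributions carry, through the cubic term in \eqref{dotyest3}, a factor $\LPN{\infty}{\eta_x^{1/2}w}^2\lesssim\HN{1}{\eta_x^{1/2}w}^2$ which is \emph{not} pointwise small --- only $\int_0^\infty\HN{1}{\eta_x^{1/2}w}^2\,dt$ is controlled, by \eqref{virial}. After the orthogonality trick (replacing $\phi$ by $\phi-\phi(t,y(t))$ against $\eta\sech^2(x-y)$), the forcing term leaves a residue of order $A^{-1}\LPN{2}{(\phi\eta_x)^{1/2}w}^2\big(1+\HN{1}{\eta_x^{1/2}w}^2\big)$; since $\phi$ can be arbitrarily small on the support of $\eta_x$ (for generic $x_0$), $\HN{1}{\eta_x^{1/2}w}$ is not dominated by $\HN{1}{(\phi\eta)_x^{1/2}w}$, and no choice of $A,\delta$ absorbs this into the good term. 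The paper instead accepts a differential inequality of Gronwall type,
\[
\frac{d}{dt}\int\phi\eta\,w^2\,dx\;\lesssim\;\HN{1}{\eta_x^{1/2}w}^2\int\phi\eta\,w^2\,dx,
\]
and closes with Gronwall together with \eqref{virial}; once \eqref{rightofsol2} is in hand, integrating the retained good term in time gives \eqref{othersmooth}. So your outline is right up to replacing ``monotone decrease'' by ``Gronwall plus the virial bound from Theorem~\ref{stab}''.
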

\begin{proof}
Using the shorthand $\eta=\eta(x-y(t))$, $\eta_x=\eta_x(x-y(t))$, \dots, $\sech^2=\sech^2(x-y(t))$, \dots we obtain:
\[
\begin{split}
\frac {d}{dt} \int & \phi\eta w^2~dx = -B_{\varepsilon,
  R}(\left(\phi\eta_x\right)^{1/2}w)\\ 
& + \int \bigg[-(2 +\dot
  y)\phi\eta_x w^2 + \phi\left(4\eta
  \sech^2-8\eta_x\tanh\right)w^3-3\phi\eta_xw^4 
\\ & + \eta
  \bigg(-3\phi_x w_x^2+ \left(\gamma\phi_x + \phi_{xxx}
  -6\tanh^2\phi_x\right)w^2-8\phi_x\tanh w^3-3\phi_x w^4\bigg) 
\\ &  +
  4\left(\phi_{xx}\eta_{x}+\phi_{x}\eta_{xx}\right)w^2 + (2+\dot
  y)\phi\eta\sech^2w\bigg]~dx
\end{split}
\]
We first note that
\[
\begin{split} 
\left(\gamma  -6\tanh^2(x-y)\right)\phi_x(t,x)
=&\! \left((\gamma-6)+6\sech^2(x-y)\right)\phi_x(t,x)\!\\
 \leq &  (\gamma-6)\phi_x(t,x)  
\\ &+ C A^{-1}\phi (t,x)\eta_x(x-y),
\end{split}
\]
where we used the fact that $\phi_x\lesssim A^{-1}\phi$.
We also have the estimate
\begin{equation*}
\int \eta(x-y)\phi_x(t,x)\tanh(x-y)) w^3 \lesssim \LPN{2}{w}\HN{1}{(\eta(x-y)\phi_x(t,\cdot))^{1/2}w}^2.
\end{equation*}
Thus if we assume $\LPN{2}{w}$ to be suitably small and $A$ to be large,  then by the above estimates and the arguments in Theorem \ref{stab} we obtain:
\begin{align}\label{rightright}
\begin{split}
\frac {d}{dt} \int \phi \eta w^2~dx \leq &  -\kappa \HN{1}{(\phi\eta)_x^{1/2}w}^2\\
& +  c \HN 1 {\eta_x w}^2  \HN{1}{(\phi\eta)_x^{1/2}w}^2
\\
& +\int \bigg[4\left(\phi_{xx}\eta_{x}+\phi_{x}\eta_{xx}\right)w^2\\
 &+(2+\dot y)\phi\eta\sech^2(x-y)w \bigg]~dx
\\ &+C\ip{\phi^{1/2} w,\eta(x-y)\sech^2(x-y)}^2.
\end{split}
\end{align} 

The plan is to integrate \eqref{rightright} to obtain our claim but first we will need estimate the last two terms.

First note for large $A$ we have the following simple estimates
\begin{align*}
\abs{\phi(t,x)-\phi(t,y(t))}\sech(x-y) &\lesssim A^{-1}e^{-2\abs{(y-x_0+\gamma t)/A}},\\
\phi(t,x)^{-1/2}&\lesssim e^{\abs{(x-x_0+\gamma t)/A}},\quad\text{and}\\
\sech(x-y)&\lesssim \eta_x(x-y).
\end{align*}
Applying the above estimates we obtain
\begin{align}\label{orthest1}
\begin{split}
\abs{\int \phi\eta\sech^2(x-y)w~dx} &= \abs{\int \left(\phi(t,x)-\phi(t,y)\right)\eta\sech^2(x-y)w~dx}
\\
&\lesssim A^{-1}e^{-2\abs{(y-x_0+\gamma t)/A}}  \LPN{2}{(\phi\eta_x)^{1/2}w}\times\\&\qquad\LPN{2}{\eta_x^{1/2}\phi^{-1/2}}\\
&\lesssim A^{-1}e^{-\abs{(y-x_0+\gamma t)/A}} \LPN{2}{(\phi\eta_x)^{1/2} w}.
\end{split}
\end{align}

By (\ref{dotyeq}) we have
\begin{align}\label{orthest3}
\begin{split}
\abs{\dot y +2}&\lesssim \int \sech^2(x-y)\left(\abs{w}+\abs{w}^3\right)\\
& \lesssim \LPN{2}{(\phi\eta_x)^{1/2}w}\LPN{2}{\eta_x^{1/2}\phi^{-1/2}}\left(1+\LPN{\infty}{\eta_x^{1/2}w}^2\right)\\&\lesssim e^{\abs{(y-x_0+\gamma t)/A}}\LPN{2}{(\phi\eta_x)^{1/2}w}\left(1+\HN{1}{\eta_x^{1/2}w}^2\right).
\end{split}
\end{align}
Combining (\ref{orthest1}),  (\ref{orthest3}) we get
\begin{multline*}
\abs{\int(2+\dot y)\phi\eta\sech^2(x-y)w~dx}\lesssim A^{-1}\LPN{2}{(\phi\eta_x)^{1/2}w}^2\bigg(1+\HN{1}{\eta_x^{1/2}w}^2\bigg),
\end{multline*}
and similarly for the last term we get
\[ 
\begin{split}
\ip{\phi^{1/2} w,\eta(x-y)\sech^2(x-y)}^2 \lesssim & \LPN{\infty}{\left(\phi(t,\cdot)^{1/2}-\phi(t,y(t))^{1/2}\right)\sech^{1/2}(\cdot-y)\eta}^2\\&\times 
\LPN{2}{\phi^{1/2}\sech^{1/2}(\cdot-y)w}^2\LPN{2}{\sech(\cdot-y)\phi^{-1/2}}^2 
\\\lesssim  & A^{-1}\HN{1}{(\eta_x\phi)^{1/2} w}^2.
\end{split}
\]
Then from the above estimates, if we assume $A$ to be suitably large we obtain
\begin{equation*}
\frac {d}{dt} \int \phi\eta w^2~dx \lesssim \LPN{2}{(\phi\eta)_x^{1/2}w}^2\HN{1}{\eta_x^{1/2}w}^2.
\end{equation*}

Thus from Gronwall's inequality we have
\begin{equation*}
\LPN{2}{\phi(t,x)^{1/2}w(t,x)}^2\lesssim \LPN{2}{\phi(0,x)^{1/2}w(0,x)}\exp\left(\int_0^t\HN{1}{\eta_x^{1/2}w}^2\right).
\end{equation*}

The claim then follows as a consequence of (\ref{virial}).
\end{proof}
As a consequence of the  Proposition \ref{stableftright2} and Theorem \ref{stab} we obtain the following theorem.

\begin{thm}\label{mKdVAsymStabL2}
Let $ \gamma < 6$. Then there exists  $\delta_{\gamma}>0$   such that if $u$ is a  solution to \eqref{mKdV} 
with initial data  $u_0$, satisfying $u_0 - \tanh(x) \in H^1(\mathbb{R})$  and $\LPN 2 {u(0,\cdot)-\tanh(x)}<\delta_{\gamma}$,  
\begin{equation}\label{asymstab}  
 \lim_{t \to \infty} \norm{ u(t,.) - \tanh(x-y(t)) }_{L^2( (-\gamma t , \infty) ) } =  0 
\end{equation} 
where $y:[0,\infty)\to\RR$ refers to the continuous function constructed in Theorem \ref{stab}.
\end{thm}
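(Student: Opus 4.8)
The plan is to deduce the decay directly from the comparison bound \eqref{rightofsol2} of Proposition \ref{stableftright2}, exploiting the strict inequality $\gamma<6$ through a ``speed-up'' of the moving weight. I would fix once and for all a speed $\gamma'$ with $\gamma<\gamma'<6$ and apply Proposition \ref{stableftright2} with $\gamma'$ in place of $\gamma$; this yields $A>0$ and a threshold $\delta_{\gamma'}>0$, and I set $\delta_\gamma:=\min(\delta_{\gamma'},\delta)$, where $\delta$ is the constant of Theorem \ref{stab}, so that the orbital bound \eqref{apriorimkdv} and the bound \eqref{rightofsol2} are simultaneously available. Write $\phi(t,x)=1+\tanh\!\left(\frac{x-x_0+\gamma' t}{A}\right)$ for the corresponding weight, with $x_0$ a free parameter chosen later, and recall $w=u-\tanh(\cdot-y)$ for $y$ the function of Theorem \ref{stab}; in particular $w(t,\cdot)\in L^2$ for every $t$, and the implicit constant in \eqref{rightofsol2} is independent of $x_0$ (this is visible from the translation-invariant, $x_0$-uniform estimates in its proof).

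Two observations then drive the argument. First, since $w(0,\cdot)\in H^1\subset L^2$ and $0\le\phi(0,x)=1+\tanh\!\left(\frac{x-x_0}{A}\right)\le 2$ tends to $0$ pointwise as $x_0\to+\infty$, dominated convergence gives
\[
\int \eta(x-y(0))\,\phi(0,x)\,w(0,x)^2\,dx \xrightarrow[x_0\to+\infty]{} 0 ,
\]
so for any $\varepsilon>0$ I may fix $x_0$ so large that this initial weighted mass is $<\varepsilon$; then \eqref{rightofsol2} bounds $\int \eta(x-y(t))\,\phi(t,x)\,w(t,x)^2\,dx\lesssim\varepsilon$ for all $t\ge 0$. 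Second, because $\gamma'>\gamma$, the front $x=x_0-\gamma' t$ of $\phi$ eventually lies to the left of $-\gamma t$: for $t>T_\varepsilon:=x_0/(\gamma'-\gamma)$ and $x>-\gamma t$ one has $x-x_0+\gamma' t>(\gamma'-\gamma)t-x_0>0$, so $\phi(t,x)\ge 1$ on all of $(-\gamma t,\infty)$.

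Combining these with the uniform lower bound $\eta\ge\delta>0$ yields, for $t>T_\varepsilon$,
\[
\norm{w(t,\cdot)}_{L^2((-\gamma t,\infty))}^2 \le \int_{-\gamma t}^{\infty}\phi\,w^2\,dx \le \frac{1}{\delta}\int \eta(x-y(t))\,\phi(t,x)\,w(t,x)^2\,dx \lesssim \varepsilon ,
\]
whence $\limsup_{t\to\infty}\norm{w(t,\cdot)}_{L^2((-\gamma t,\infty))}^2\lesssim\varepsilon$, and letting $\varepsilon\downarrow 0$ gives \eqref{asymstab}. The point requiring care — and the main obstacle to a naive application of \eqref{rightofsol2} — is precisely the mismatch of regions: taken at face value with front speed $\gamma$, \eqref{rightofsol2} only controls $w$ on $(x_0-\gamma t,\infty)$, a proper subset of the desired $(-\gamma t,\infty)$ missing a window of fixed width $x_0$. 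The strictly faster speed $\gamma'$ reverses this inclusion for large $t$, while the initial-data smallness argument sees only $\phi(0,\cdot)$ and is therefore unaffected; the remaining subtlety is simply to confirm that the constant in \eqref{rightofsol2} does not degrade as $x_0\to\infty$, which I would read off from the $x_0$-uniform bounds already established in the proof of Proposition \ref{stableftright2}.
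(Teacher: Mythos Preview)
Your argument is correct and is exactly the standard way to extract asymptotic decay from a monotonicity estimate of the form \eqref{rightofsol2}; the paper itself gives no details beyond the sentence ``As a consequence of Proposition \ref{stableftright2} and Theorem \ref{stab} we obtain the following theorem,'' and your proof is the natural way to unpack that. One minor point: you use the symbol $\delta$ both for the smallness threshold of Theorem \ref{stab} and for the uniform lower bound on $\eta$; in the paper the latter is the parameter in $\eta_{R,\delta}$, which in the proof of Theorem \ref{stab} is fixed to $\varepsilon=e^{-20}$, so it would be cleaner to write $\eta\ge\varepsilon$ there to avoid the clash.
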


Making use of the Miura transformation to relate mKdV near the kink 
with KdV near zero,  we will  replace $L^2$ in the statement of the above theorem with $H^s$ for any non-negative integer $s$.  Specifically we have, denoting again $w= u-\tanh(.-y(t))$:

\begin{cor}\label{mKdVAsymStab}
Let $ \gamma < 6$ and $s$ any positive integer. Then there exists  $\delta_{\gamma}>0$   such that if $u$ is a  solution to \eqref{uKINKmKdV} 
with initial data  $u_0$, satisfying $u_0 - \tanh(\cdot) \in H^s(\mathbb{R})$  and $ \LPN 2 { u(0,\cdot)-\tanh(x)} <\delta_{\gamma}$,  
\begin{equation}\label{asymstab2}  
 \lim_{t \to \infty} \norm{ u(t,\cdot) - \tanh(x-y(t)) }_{H^{s}( (-\gamma t , \infty) ) } =  0 
\end{equation} 
where $y:[0,\infty)\to\RR$ refers to the continuous function constructed in Theorem \ref{stab}.  Moreover we have the smoothing estimate
\begin{equation}\label{othersmooth2}  
\int_0^\infty \HN{s+1}{\rho_x(t,\cdot+6t)^{1/2}w(t,\cdot)}^2~dt\leq C\HN s{\rho(0,\cdot)^{1/2}w(0,\cdot)}^2.
\end{equation}
where C depends on $\gamma$ and $\HN s{u(0,\cdot)-\tanh(\cdot)}$, and $\rho$ is defined as
\[\rho(x,t)=1+\tanh\left(\frac{x-x_0+(\gamma-6) t}{A}\right),\]
for some large constant $A>0$.
 \end{cor}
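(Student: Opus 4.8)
The plan is to transport the whole problem to the KdV side, where the Miura image of the kink is the trivial solution, to prove the required decay and smoothing there, and to transfer back. Exactly as in Corollary \ref{highernorm}, I would set
\[
v(t,x):= w(t,x-6t)^2+2w(t,x-6t)\tanh(x-y(t)-6t)+w_x(t,x-6t),
\]
so that $v$ solves \eqref{KdV}. Since $\LPN 2 w$ is small and $w$ remains bounded in $H^s$ by Corollary \ref{highernorm}, $v$ is a small KdV solution sitting near $0$ in $H^{s-1}$. The two sides are comparable: one has directly $\HN{s-1}{v}\lesssim \HN{s}{w}(1+\HN{s}{w})$, while Remark \ref{wvest} (applied with its index $s$ replaced by $s-1$) gives the reverse bound $\HN{s}{w}\lesssim (1+\LPN 2 w+\HN{s-1}{v})^N(\HN{s-1}{v}+\LPN 2 w)$. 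The same equivalence survives for norms localised by the slowly varying weight $\rho$, since $\rho^{1/2}$ commutes with the differential Miura map up to errors controlled by $\rho_x=O(A^{-1})$. It therefore suffices to prove, for the KdV solution $v$ near $0$, the weighted $H^s$ analogues of the two estimates \eqref{rightofsol2} and \eqref{othersmooth} of Proposition \ref{stableftright2}: an almost-monotone weighted $H^{s-1}$ mass, and a local smoothing $\int_0^\infty \HN{s}{\rho_x^{1/2}v}^2~dt\lesssim \HN{s-1}{\rho(0,\cdot)^{1/2}v(0,\cdot)}^2$. Transferring the latter back through Remark \ref{wvest} then yields \eqref{othersmooth2}.

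These weighted estimates I would establish by induction on the integer $s$, the base case $s=0$ being Proposition \ref{stableftright2} after transfer. For the inductive step I would differentiate \eqref{KdV} $s$ times, multiply by $\rho\,\partial_x^s v$, and integrate. Integration by parts turns the Airy term into $-3\int\rho_x|\partial_x^{s+1}v|^2$ in the time derivative, which upon integrating in time is exactly the gained local smoothing; moving the weight contributes $\rho_t=(\gamma-6)\rho_x$, hence the term $(\gamma-6)\int\rho_x|\partial_x^s v|^2$, which has the correct non-positive sign \emph{precisely because} $\gamma<6$, while the remaining weight terms are $O(A^{-2})\rho_x$ and are absorbed for $A$ large. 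Note that on the KdV side the kink coefficients have disappeared, so the only error terms are the nonlinear contribution, a multiple of $\int\rho\,\partial_x^s v\,\partial_x^s(vv_x)$, and the weight commutator $[\partial_x^s,\rho]$; these are of lower order and are controlled using the smallness of $\LPN 2 w$, the Gagliardo--Nirenberg inequality, and the lower-order smoothing furnished by the induction hypothesis. Invoking the uniform-in-time $H^{s-1}$ bound on $v$ from the KdV conservation laws of Appendix \ref{conserved} (as in \eqref{zut}) then closes both the mass bound and the smoothing estimate.

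With these two $H^s$ estimates in hand, the decay statement \eqref{asymstab2} follows by repeating the proof of Theorem \ref{mKdVAsymStabL2} verbatim, with the higher-order smoothing \eqref{othersmooth2} and the $H^s$-mass almost-monotonicity replacing their $L^2$ counterparts \eqref{othersmooth} and \eqref{rightofsol2}. Concretely, the finiteness of the time integral in \eqref{othersmooth2} forces $\HN{s+1}{\rho_x(t,\cdot+6t)^{1/2}w}\to 0$, the integrand being uniformly continuous by the uniform $H^s$ bound of Corollary \ref{highernorm} together with the equation \eqref{KINKmKdV}. Running this for the family of weights $\rho$ with varying centre $x_0$ and combining with the weighted-mass monotonicity through the same covering argument as in Theorem \ref{mKdVAsymStabL2} gives the decay of $\norm{u(t,\cdot)-\tanh(x-y(t))}_{H^s((-\gamma t,\infty))}$, the lowest-order part being anchored by Theorem \ref{mKdVAsymStabL2} itself.

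The main obstacle is the inductive weighted smoothing of the second paragraph: one must show that the single favourable dispersive term $-3\int\rho_x|\partial_x^{s+1}v|^2$ dominates all nonlinear and commutator errors \emph{uniformly in time}, which is exactly where both the threshold $\gamma<6$ (for the sign of the transport term) and the smallness of $\LPN 2 w$ (for absorbing the quadratic KdV interaction) are indispensable, and where the conservation laws are needed to keep the coefficients bounded for all $t$; the delicate point is that $\rho$ is not controlled by $\rho_x$ to the right of the moving front, so the error pairing must be closed using the global mass bound rather than the smoothing term alone. A secondary technical point is the passage of the \emph{weighted} norm equivalence through the nonlinear, non-invertible Miura map: one must verify that the construction underlying Remark \ref{wvest} tolerates insertion of the slowly varying weight $\rho$ with errors of size $O(A^{-1})$, so that the clean KdV smoothing estimate indeed transfers to the mKdV statement \eqref{othersmooth2}.
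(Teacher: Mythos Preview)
Your overall strategy is correct and is exactly the one the paper follows: pass to the KdV solution $v$ via the Miura map, prove weighted $H^{s-1}$ bounds and local smoothing for $v$ by induction on $s$, and transfer back using the weighted norm equivalences (your \eqref{wtov}/\eqref{vtow} are precisely what the paper writes down). The base case and the deduction of \eqref{asymstab2} from the weighted bounds are also as in the paper.

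The gap is in your inductive step. If you differentiate \eqref{KdV} $k$ times and test against $\rho\,\partial_x^k v$, the nonlinearity produces (after the obvious integrations by parts) a term of the form
\[
c_k\int \rho\, v_x\,(\partial_x^k v)^2\,dx,\qquad c_k\ne 0\ \text{for }k\ge 1,
\]
and this term carries the undifferentiated weight $\rho$, not $\rho_x$. You flag exactly this (``$\rho$ is not controlled by $\rho_x$ to the right of the moving front''), but none of the tools you list closes it: smallness of $\LPN 2 w$ does not make $\LPN{\infty}{v_x}$ small, Gagliardo--Nirenberg only trades it for $\LPN 2{\rho^{1/2}\partial_x^{k+1}v}$ (again $\rho$, not $\rho_x$), and the lower-order smoothing from the induction hypothesis involves only $\rho_x$. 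Feeding the bound $|\int \rho v_x V^2|\lesssim \int \rho V^2$ into Gronwall yields exponential growth, not the uniform-in-time control you need.

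The paper's remedy is to replace $\int \rho(\partial_x^k v)^2$ by $\int \rho\,T^{(k)}$, where $T^{(k)}$ is the exact KdV conserved density of Appendix~\ref{conserved}. Because $\partial_t T^{(k)}+\partial_x X^{(k)}=0$ holds identically, one gets
\[
\frac{d}{dt}\int \rho\,T^{(k)}\,dx=\int\rho_t\,T^{(k)}+\rho_x\,X^{(k)}\,dx,
\]
so \emph{every} term on the right carries a derivative of $\rho$; the offending $\int \rho v_x(\partial_x^k v)^2$ is cancelled by the lower-order correction in $T^{(k)}$ (for $k=1$ this is the familiar $2v^3$ term). After isolating the good piece $-3\int\rho_x(\partial_x^{k+1}v)^2$ coming from $X^{(k)}$, all remaining terms are bounded by $C\,\HN{k}{\rho_x^{1/2}v}^2$ and hence are time-integrable by the induction hypothesis. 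With this single change your argument goes through exactly as you describe.
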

\begin{proof}
Note that the absolute values of the derivatives of $\rho$ are bounded above by a constant multiple of $\rho$.  The same property  is also true for the function $\rho_x$.  This property of $\rho$ and $\rho_x$ will be used extensively below without further comment.  

 Define $v$ as in \eqref{uwrel}, hence  $v$ is a solution to \eqref{KdV}.  Fixing $t\ge0$, observe from \eqref{uwrel},
Lemma \ref{techlem} we have for $f:=\rho^{1/2}$ or $f:=\rho_x^{1/2}$ the following estimate
\begin{align}\label{wtov}
\begin{split}
\HN{s-1}{f(t,\cdot)v(t,\cdot)}\lesssim&\HN{s-1}{f(t,\cdot+6t)w(t,\cdot)^2}+\\&\HN{s-1}{f(t,\cdot+6t)w(t,\cdot)\tanh(\cdot-y)}+\\&\HN{s-1}{f(t,\cdot+6t)w_x(t,\cdot)}\\
\lesssim& \left(1+ \HN {s-1} {w(t,\cdot)}\right) \HN{s}{f(t,\cdot+6t) w(t,\cdot)}\\
\leq &C \HN{s}{f(t,\cdot+6t) w(t,\cdot)},
\end{split}
\end{align} 
for all integers $s\ge 1$, where $C$ depends on $\HN {s-1} {w(t,\cdot)}$. 

Similarly we also have the estimate
\begin{align}\label{vtow}
\begin{split}
\HN{s}{f(t,\cdot+6t)w(t,\cdot)}\lesssim& \LPN 2{f(t,\cdot+6t) w(t,\cdot)}+\HN{s-1}{f(t,\cdot+6t) w_x(t,\cdot)}\\
\lesssim &\LPN 2{f(t,\cdot+6t) w(t,\cdot)}+\HN{s-1}{f(t,\cdot) v(t,\cdot)}+\\
&\HN{s-1}{f(t,\cdot+6t) w(t,\cdot)^2}+\\
&\HN{s-1}{f(t,\cdot+6t) w(t,\cdot)\tanh(\cdot-y)}\\
\lesssim & \left(1+\HN {s-1}{w(t,\cdot)}+\HN {1}{w(t,\cdot)}\right)\times\\&\HN{s-1}{f(t,\cdot+6t) w(t,\cdot)}+\HN{s-1}{f(t,\cdot) v(t,\cdot)}\\
\leq & C \left( \HN{s-1}{f(t,\cdot+6t) w(t,\cdot)}+\HN{s-1}{f(t,\cdot) v(t,\cdot)} \right),
\end{split}
\end{align}
for all integers $s\ge 1$, where $C$ depends on $\HN {s-1} {w(t,\cdot)}+\HN {1} {w(t,\cdot)}$.

The inequalities \eqref{wtov} and \eqref{vtow} will essentially allow to shift our focus from a study of mKdV near a kink to that of KdV in a neighbourhood of zero.  In particular, note that by Theorem \ref{stab} and Corollary \ref{highernorm}, the constants in \eqref{wtov} and \eqref{vtow} depend only on the initial data $\HN {s-1} {u(0,\cdot)-\tanh(\cdot)}$ and $\HN {s-1} {u(0,\cdot)-\tanh(\cdot)}+\HN {1} {u(0,\cdot)-\tanh(\cdot)}$ respectively. 

Now consider the case $s=1$.  Below $C$ will denote a positive constant depending on $\HN {1} {u(0,\cdot)-\tanh(\cdot)}$ and $\gamma$, which may change from line to line. 

A simple energy estimate yields 
\begin{equation*}
\frac {d}{dt} \int \rho v^2 dx =\int \rho_t v^2+\rho_{xxx}v^2-3\rho_x v_x^2-4\rho_x v^3~dx.
\end{equation*}
Note that replacing $\rho$ with $1$ we recover the $L^2$ conservation law for KdV.

Also, we have the simple estimate
\begin{align*}
\int \rho_x v^3~dx &\leq \LPN 2 v \LPN 2 {\rho_x^{1/2} v} \LPN {\infty} {\rho_x^{1/2} v}\\
&\lesssim \LPN 2 {v(0,\cdot)} \LPN 2 {\rho_x^{1/2} v} \HN 1 {\rho_x^{1/2} v}\\
&\leq C \left(\varepsilon^{-1} \LPN 2 {\rho_x^{1/2} v}^2 +\varepsilon \HN 1 {\rho_x^{1/2} v}^2\right),
\end{align*}
for any $\varepsilon>0$.

Hence from the above estimates
\begin{align*}
\frac {d}{dt} \int \rho(t,x) v^2(t,x)~dx\leq&- 2 \LPN{2}{\rho_x(t,\cdot)^{1/2}v_x(t,\cdot)}^2
 \\&\qquad+  C \HN{1}{\rho_x(t,\cdot+6t)^{1/2} w(t,\cdot)}^2.
\end{align*}

Therefore from \eqref{othersmooth} we obtain 
\begin{align*}
&\int \rho(t,x) v^2(t,x)~dx+2\int_0^{\infty} \LPN{2}{\rho_x(t^{\prime},\cdot)^{1/2}v_x(t^{\prime},\cdot)}^2~dt^{\prime}\lesssim \\&\qquad \int \rho(0,x) v(0,x)^2~dx+ C\int \rho(0,x)w(0,x)^2~dx.
\end{align*}

Then from the above inequality, \eqref{vtow} (with $f=\rho^{1/2}$), and \eqref{asymstab}, we obtain \eqref{asymstab2} for $s=1$.  Similarly from the above inequality, \eqref{vtow} (with $f=\rho_x^{1/2}$), and \eqref{othersmooth}, we obtain \eqref{asymstab2} for $s=1$.  

We now will provide a sketch of the proof for $s>1$.  We proceed by induction, assuming as our inductive hypothesis that \eqref{asymstab2} and \eqref{othersmooth2} holds for a given positive integer $s$. Below $C$ will denote a positive constant depending on $\HN {s+1} {u(0,\cdot)-\tanh(\cdot)}$ and $\gamma$, which may change from line to line. 

From \eqref{conslaw} we have
\begin{equation}\label{iterhiest}
\frac{d}{dt} \int \rho  T^{(s)}dx =\int \rho_t T^{(s)}+\rho_x X^{(s)}~dx.
\end{equation} 
Observe that from the two monomials $2\partial_x^su\partial_x^{s+2}u$ and $-(\partial_x^{s}u)^2$ in $X^{(s)}$ we recover (after a couple of integration by parts) the terms 
\begin{equation}\label{goodterm}
-3\rho_x(\partial_x^{s+1}u)^2+\rho_{xxx}\left(\partial_x^s u\right)^2,
\end{equation}
in the integrand on the right hand side of \eqref{iterhiest}.

We now proceed in a similar manner to the case of $s=1$, using extensively the properties of $X^{(s)}$ and $T^{(s)}$ as stated in Appendix \ref{conserved}.  In this way, one can show that 
\begin{align*}
\int \rho_t T^{(s)}+\rho_x X^{(s)}+3\rho_x(\partial_x^{s+1}u)^2~dx &\lesssim \left(1+\HN{s}{v}^{s+1}\right)\HN{s}{\rho_x^{1/2} v}^2\\&\leq C \HN{s}{\rho_x^{1/2} v}^2.
\end{align*}
Integrating \eqref{iterhiest} with respect to $t$, and using our induction hypothesis together with \eqref{wtov} and \eqref{vtow} leads to
\begin{multline*}
\int \rho(t,x)  T^{(s)}(t,x)~dx+3\int_0^\infty \LPN {2}{\rho_x(t^{\prime},\cdot)^{1/2}\left(\partial_x^{s+1} v\right)(t^{\prime},\cdot)}^2~dt^{\prime}\leq \\C \HN{s}{\rho(0,\cdot)^{1/2} w(0,\cdot)}^2+ \int \rho(0,x)  T^{(s)}(0,x)~dx.
\end{multline*}

We observe that the terms on the left hand side of the above equation
resulting from lower order terms in $T^{(s)}$ can be bounded by a constant multiple of
\[\left(1+\HN{s}{v}^{s}\right)\HN{s-1}{\rho^{1/2} v}^2\leq C \HN{s-1}{\rho^{1/2} v}^2.\]
Similarly, the terms on the right hand side 
resulting from lower order terms in $T^{(s)}$ can be bounded by $C \HN{s-1}{\rho(0,\cdot)^{1/2} v(0,\cdot)}^2$.
Thus we obtain
\begin{multline*}
\int \rho(t,x)  \left(\partial_x^s v\right)^2(t,x)~dx+3\int_0^\infty \LPN {2}{\rho_x(t^{\prime},\cdot)^{1/2}\left(\partial_x^{s+1} v\right)(t^{\prime},\cdot)}^2~dt^{\prime}\leq \\ \int \rho(0,x)  \left(\partial_x^s v\right)^2(0,x)~dx+\\C \left(\HN{s}{\rho(0,\cdot)^{1/2} w(0,\cdot)}^2+\HN{s-1}{\rho (t,\cdot)^{1/2}v(t,\cdot)}^2+\HN{s-1}{\rho(0,\cdot)^{1/2} v(0,\cdot)}^2\right).
\end{multline*}
 Then applying \eqref{wtov} and \eqref{vtow}, together with our induction hypothesis, we obtain \eqref{asymstab2} and \eqref{othersmooth2} for $s+1$.
\end{proof}


\section{Existence of weak solutions to the Korteweg--de Vries equation with initial data in $H^{-1}$}

With the help of Theorem \ref{strongsol}, Lemma \ref{localbd} and Lemma \ref{KATOSMOOTH}, we will now prove the existence of weak $L^2$ solutions to the IVP (\ref{KINKmKdV}).  

\begin{prop}\label{weakKINKmKdV}
For any $v_0\in L^2$, there exists a weak solution $u=v+Q$ to (\ref{KINKmKdV}) satisfying
\begin{align}
v\in C_{\omega}([0,\infty); L^2),\label{prop1}\\
v_x\in L^2([0,T]\times[-R,R]) ~~\text{for any}~~ R,T<\infty,\label{prop2}\\
\LPN{2}{v(t,\cdot)} \lesssim \LPN{2}{v(0,\cdot)}+t^{1/2} ~~\text{for any}~~ t\in[0,\infty),\label{prop4}\\
 v(t,\cdot) \to v_0 \qquad \text{ in } L^2 ~~\text{as}~~t\downarrow 0.\label{prop3}
\end{align}
Furthermore there exists a $\delta>0$ such that if $\LPN 2 {v_0}<\delta$ then there exists a continuous function $y:\RR\to\RR$ such that if we write $u=w+\tanh(\cdot+y(t))$, we have
\begin{equation}\label{lastpro}
\LPN {2}{w}\lesssim \LPN 2 {v_0}.
\end{equation}
\end{prop}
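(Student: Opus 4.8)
The plan is a standard approximation and compactness argument, leaning on the three results singled out before the statement. First I would approximate the datum $v_0\in L^2$ by a sequence $v_0^{(n)}\in H^1$ with $v_0^{(n)}\to v_0$ in $L^2$. By Theorem \ref{strongsol} each $v_0^{(n)}$ launches a unique global strong solution $v^{(n)}\in C([0,\infty);H^1)$ of \eqref{KINKmKdV}. Lemma \ref{localbd} then supplies two bounds that are uniform in $n$, since $\sup_n\LPN 2{v_0^{(n)}}<\infty$: the growth bound \eqref{badL2bd}, which makes $v^{(n)}$ bounded in $L^\infty([0,T];L^2)$ for each $T$, and the Kato smoothing estimate \eqref{KATOSMOOTH}. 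Because $Q_x(t,x)=\sech^2(x+2t)$ is bounded below by a positive constant on any compact set $[0,T]\times[-R,R]$, the latter yields that $v^{(n)}$ is bounded in $L^2([0,T];H^1([-R,R]))$.

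To extract a convergent subsequence I would use the equation to control time regularity: from \eqref{KINKmKdV} we have $\partial_t v^{(n)}=-\partial_x^3 v^{(n)}+2\partial_x(3Q^2 v^{(n)}+3Q(v^{(n)})^2+(v^{(n)})^3)$, which is bounded in $L^2([0,T];H^{-m}_{\mathrm{loc}})$ for some $m$, using the uniform $L^\infty L^2\cap L^2 H^1_{\mathrm{loc}}$ control and a Gagliardo--Nirenberg bound (in one dimension $\LPN 6{v^{(n)}}_{t,x}$ is controlled locally) to place the cubic term. An Aubin--Lions--Simon compactness argument then gives, after passing to a subsequence, strong convergence $v^{(n)}\to v$ in $L^2([0,T]\times[-R,R])$, weak-$*$ convergence in $L^\infty L^2$, and $\partial_x v^{(n)}\rightharpoonup \partial_x v$ in $L^2_{\mathrm{loc}}$. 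The uniform $L^6_{\mathrm{loc}}$ integrability together with the strong $L^2_{\mathrm{loc}}$ convergence upgrades to $(v^{(n)})^k\to v^k$ in $L^1_{\mathrm{loc}}$ for $k=2,3$, which is exactly what is needed to pass to the limit in the distributional formulation and conclude that $v$ solves \eqref{KINKmKdV} weakly. Properties \eqref{prop2} and \eqref{prop4} then follow by weak lower semicontinuity of the relevant norms, and the weak continuity \eqref{prop1} follows since for $\varphi\in\testf$ the map $t\mapsto\ip{v(t),\varphi}$ is continuous (its derivative being controlled through the equation), which extends to all $L^2$ test functions by density and the uniform $L^2$ bound.

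The delicate point, and what I expect to be the main obstacle, is the strong convergence \eqref{prop3} as $t\downarrow 0$: the compactness above only delivers $v(t)\rightharpoonup v_0$ weakly, so one must still show $\lim_{t\downarrow 0}\LPN 2{v(t)}=\LPN 2{v_0}$. For this I would pass the mass balance \eqref{mass1} to the limit. Its right-hand side is spatially localized through the weights $Q_x$ and $QQ_x$ and is uniformly integrable in time, so $t\mapsto\LPN 2{v^{(n)}(t)}^2$ is equicontinuous; the genuine difficulty is upgrading the strong $L^2_{\mathrm{loc}}$ convergence to convergence of the full $L^2$ norm at fixed times, i.e. ruling out escape of $L^2$ mass to spatial infinity (tightness). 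This is where the real work lies, and it can be extracted from a weighted variant of the mass estimate that exploits precisely the spatial localization of the forcing term in \eqref{mass1}; combined with weak lower semicontinuity it yields $\limsup_{t\downarrow 0}\LPN 2{v(t)}\le\LPN 2{v_0}$, and hence strong continuity at the origin.

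Finally, for the smallness clause \eqref{lastpro}, suppose $\LPN 2{v_0}<\delta$ with $\delta$ the threshold of Theorem \ref{stab}, so that $\LPN 2{v_0^{(n)}}<\delta$ for large $n$. Applying Theorem \ref{stab} to each $v^{(n)}$ produces a continuous $y^{(n)}$ with $\LPN 2{u^{(n)}(t)-\tanh(\cdot-y^{(n)}(t))}\lesssim\LPN 2{v_0^{(n)}}\lesssim\LPN 2{v_0}$, while the estimate \eqref{dotyest3} gives uniform control of $\dot y^{(n)}$ and hence equicontinuity of $\{y^{(n)}\}$. By Arzel\`a--Ascoli I would extract $y^{(n)}\to y$ locally uniformly, pass to the limit, and invoke weak lower semicontinuity of the $L^2$ norm to obtain \eqref{lastpro} for the limit $w=u-\tanh(\cdot-y)$, matching the asserted bound up to the translation sign convention used in the statement.
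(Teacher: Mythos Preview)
Your proposal follows essentially the same approximation--compactness scheme as the paper's proof, and the compactness portion (uniform $L^\infty_t L^2_x$ and Kato smoothing from Lemma~\ref{localbd}, Aubin--Lions, passage to the limit in the nonlinearity, Arzel\`a--Ascoli for $y^{(n)}$ via \eqref{dotyest3} and \eqref{virial}) matches the paper almost step for step; the paper simply defers the details to Kato's classical argument \cite{MR759907}.

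The one place where your sketch is vaguer than it needs to be is the strong continuity \eqref{prop3}. You correctly isolate this as the delicate step and correctly aim for $\limsup_{t\downarrow 0}\LPN 2{v(t)}\le\LPN 2{v_0}$, but framing it as a spatial tightness issue is slightly off, and appealing to \eqref{mass1} alone runs into trouble because its cubic term $4Q_x v^3$ is not controlled by $\LPN 2 v$ uniformly in time. The paper instead uses the specific weighted quantity $\tilde v := v\sqrt{1+\tfrac1{10}Q}$, for which the estimate \eqref{equality} already established in Lemma~\ref{localbd} gives directly $\frac{d}{dt}\LPN 2{\tilde v}^2 \lesssim 1$ with a \emph{constant} right-hand side, absorbing the cubic term. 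This passes to the limit and yields $\LPN 2{\tilde v(t)}^2 - \LPN 2{\tilde v(0)}^2 \lesssim t$, which combined with weak continuity gives strong $L^2$ continuity at $t=0$. That is the ``weighted variant'' you were reaching for; it is already in hand from the proof of Lemma~\ref{localbd}.
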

\begin{proof}
Let $v_0^{(j)}\in H^{1}$ be a sequence such that $v_0^{(j)}\rightarrow
v_0$ in $L^2$, and $\LPN{2}{v_0^{(j)}}=\LPN{2}{v_0}$.  Define $v^{(j)}\in
C([0,\infty); H^{1})$ to be the solution to (\ref{KINKmKdV}) with
  $v^{(j)}(0,\cdot)=r_{0,j}$, corresponding to Theorem \ref{strongsol}.  
  
   If in addition we have $\LPN 2 {v_0}<\delta$, and we write 
  \[u^{(j)}(t,x)=v^{(j)}(t,x)+Q(t,x)=w^{(j)}(t,x)+\tanh(x+y^{(j)}(t)),\]
  where $y^{(j)}$ is defined as in Theorem \ref{stab}, then using \eqref{dotyest3}, \eqref{apriorimkdv}, and \eqref{virial} we obtain a uniform bound of $y^{(j)}$ in $H^1([0,T])$, and thus by Morrey's inequality we have a uniform bound of $y^{(j)}$ in $C^{0,1/2}([0,T])$, for any fixed $T>0$.  By the Azelà-Ascoli theorem, and a suitable diagonal argument we can construct a subsequence $(v^{(N_j)})$ such that for all $T>0$, $y^{(N_j)}$ converges uniformly to some continuous function $y:\RR^+\rightarrow \RR$.  Moreover from \eqref{apriorimkdv}, we have for any $t\geq 0$, there exists a $k$ such that if
$j>k$
\begin{equation}\label{unibd}
\LPN {2}{u^{({N_j})}(t,\cdot)-\tanh(\cdot-y(t))}\lesssim \LPN 2 {v_0}.
\end{equation}

  Now applying an almost identical argument to the one given in
  \cite{MR759907} to construct weak $L^2$ KdV solutions -- here the
  smoothing estimate is replaced by \eqref{KATOSMOOTH}, and $L^2$
  conservation replaced by \eqref{badL2bd} -- we obtain a subsequence
  $(v^{(N^{\prime}_j)})$ such that for any $R,T>0$ the sequence converges
  weakly in $L^2([0,T]);H^1([-R,R]))$, strongly in
  $L^2([0,T]\times[-R,R]))$ and weak-* in $L^\infty([0,\infty);L^2)$ to
    a limit $v$ satisfying (\ref{prop1}-\ref{prop4}), and solves
    \eqref{KINKmKdV} in the distributional sense. 

In order to prove \eqref{prop3} we set $\tilde v= v\sqrt{1+\frac1{10}Q}$, and observe that $\tilde v$ is continuous at  $t=0$ if and only \eqref{prop3} is satisfied.  Note that weak continuity of $\tilde v$ in $t$ follows from weak continuity of $v$.  Estimating we obtain
\begin{align*}
\LPN 2{\tilde v(t,\cdot)-\tilde v(0,\cdot)}^2&=\LPN 2{\tilde v(t,\cdot)}^2+\LPN 2{\tilde v(0,\cdot)}^2-2\ip{\tilde v(t,\cdot),\tilde v(0,\cdot)}\\
&\le \left(\LPN 2{\tilde v(t,\cdot)}^2-\LPN 2{\tilde v(0,\cdot)}^2\right)+2\LPN 2{\tilde v(0,\cdot)}^2-2\ip{\tilde v(t,\cdot),v_0}\\
&=\left(\LPN 2{\tilde v(t,\cdot)}^2-\LPN 2{\tilde v(0,\cdot)}^2\right)+2\ip{\tilde v(0,\cdot)-\tilde v(t,\cdot),\tilde v(0,\cdot)}.
\end{align*}
Then from \eqref{equality} and the weak continuity of $\tilde v$ we obtain \eqref{prop3}.

Finally, note \eqref{lastpro} is a simple consequence of \eqref{unibd}.
\end{proof}

We will now construct weak $H^{-1}$ solutions to the Korteweg--de
Vries equation.  Using the scaling symmetry, we may restrict to small
initial data in $H^{-1}$.

\begin{prop}\label{lastprop}
For any $u_0\in H^{-1}$ satisfying $\HN{-1}{u_0}\leq \epsilon$, for $\epsilon>0$ chosen suitably small, there exists a weak solution $u$ to (\ref{KINKmKdV}), and a continuous function $y:[0,\infty)\to\RR$  satisfying
\begin{align}
u\in C_{\omega}([0,\infty); H^{-1}),\label{propp1}\\
u\in L^2([0,T]\times[-R,R]) ~~\text{for any}~~ R,T<\infty,\label{propp2}\\
\HN{-1}{u}\lesssim \HN{-1}{f},\label{propp3}\\
u(t,\cdot) \to u_0 \quad \text{ in } H^{-1} ~~\text{as}~~t\downarrow 0.\label{propp4}
\end{align}
\end{prop}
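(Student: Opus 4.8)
The plan is to reduce the $H^{-1}$ KdV problem to the $L^2$ mKdV problem of Proposition \ref{weakKINKmKdV} by means of the Galilean symmetry and the inverse Miura map, and then to transport the resulting weak mKdV solution back through the Miura map. First I would use the scaling \eqref{scaling} to arrange $\HN{-1}{u_0}\le\epsilon$ as small as desired and fix $\lambda=1$. For $\epsilon$ small, the quadratic estimate $\abs{\ip{u_0\phi,\phi}}\lesssim \HN{-1}{u_0}\HN{1}{\phi}^2$ (a consequence of Lemma \ref{techlem}) shows that the Schrödinger operator $-\partial_{xx}+u_0$ has spectrum contained in $(-1,\infty)$, i.e. $u_0\in U^{-1}_{>-1}$. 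By Theorem \ref{inverse} there is then a unique $v_0\in L^2$ with $F_1(v_0)=(u_0,\rho)$, that is $\miura(v_0+\tanh)=u_0+1$; since $F_1$ is an analytic diffeomorphism with $F_1(0)=(0,0)$, the smallness of $u_0$ transfers to $\LPN 2 {v_0}\lesssim \HN{-1}{u_0}$ (cf. Remark \ref{wvest} with $s=-1$).

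Next I would feed $v_0$ into Proposition \ref{weakKINKmKdV}, obtaining a weak solution $\tilde u=v+Q$ of the mKdV equation near the kink, together with the tracking function $y$ and the uniform deviation bound \eqref{lastpro}, $\LPN 2 w\lesssim\LPN 2 {v_0}$, where $w=\tilde u-\tanh(\cdot-y(t))$. I would then define the candidate KdV solution by applying the Miura map and undoing the Galilean shift, $u(t,x):=\miura(\tilde u)(t,x-6t)-1=v_x(t,x-6t)+v(t,x-6t)^2+2v(t,x-6t)Q(t,x-6t)$, using $\miura(v+Q)-1=v_x+v^2+2vQ$. The four claimed properties are then read off from the mKdV side: membership $u(t,\cdot)\in H^{-1}$ and weak continuity follow from $v\in C_\omega([0,\infty);L^2)$ together with Lemma \ref{techlem} (which gives $v^2\in L^1\subset H^{-1}$ and $vQ\in L^2$); the local integrability \eqref{propp2} follows since $v\in L^\infty_t L^2\cap L^2_t H^1_{loc}$ interpolates to $v\in L^4_{loc}$, whence $v^2,v_x\in L^2_{loc}$; and the convergence \eqref{propp4} follows from \eqref{prop3}, $v(t)\to v_0$ in $L^2$, and continuity of translation in $H^{-1}$. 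Crucially, the \emph{uniform} bound \eqref{propp3} must be derived from the kink representation rather than from $v$ directly, since $\LPN 2 {v(t)}$ may grow like $t^{1/2}$: writing $u$ through $w$ gives $\HN{-1}{u(t)}\lesssim\LPN 2 {w(t)}+\LPN 2 {w(t)}^2\lesssim\LPN 2 {v_0}\lesssim\HN{-1}{u_0}$ by \eqref{lastpro}.

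The main obstacle is verifying that $u$ is a weak solution of \eqref{KdV} in the distributional sense. Expanding $u^2$ in the weak formulation produces the term $v_x^2$, for which the approximating sequence furnishes only weak $L^2_{loc}$ convergence, so one cannot pass to the limit termwise. I would instead exploit the Miura identity \eqref{miuraid}, $\KdV(\miura(\tilde u))=\partial_x(\mKdV(\tilde u))+2\tilde u\,\mKdV(\tilde u)$, which shows that testing the KdV equation for $u$ against $\varphi$ is the same as testing the mKdV equation for $\tilde u$ against $-\varphi_x+2\tilde u\varphi$. Since $\tilde u$ solves mKdV weakly, the right member vanishes provided the products are justified, and this is exactly where the Kato smoothing estimate \eqref{KATOSMOOTH} enters: it supplies the $L^2_t H^1_{loc}$ control on $v$ that makes $\tilde u\,\mKdV(\tilde u)$ meaningful and stable under the limit. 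In practice I would carry this out on the smooth approximants $\tilde u^{(j)}$ of Proposition \ref{weakKINKmKdV}, whose Miura images are classical KdV solutions, and pass to the limit using the strong $L^2_{loc}$ convergence of $v^{(j)}$ and the uniform smoothing bound; this is the argument of Kappeler et al.\ \cite{MR2189502}, whose applicability is now guaranteed by the estimates of Section \ref{seckink}.
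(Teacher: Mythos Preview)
Your proposal is correct and follows essentially the same route as the paper: invert the Miura map via Theorem \ref{inverse} to obtain $v_0\in L^2$, apply Proposition \ref{weakKINKmKdV} to get a weak mKdV solution near the kink, push it through the Miura map plus Galilean shift, and invoke the Kappeler et al.\ argument (packaged here as Lemma \ref{distkdvlem}) for the distributional KdV identity. Your detour through the spectral condition $u_0\in U^{-1}_{>-1}$ before invoking the local diffeomorphism at $F_1(0)=(0,0)$ is slightly redundant---the local diffeomorphism alone gives both existence of $v_0$ with $F_1(v_0)=(u_0,0)$ and the smallness $\LPN 2{v_0}\lesssim\HN{-1}{u_0}$ in one stroke---but the logic is sound, and you correctly isolate the key point that the uniform $H^{-1}$ bound must come from the kink-centred $w$ via \eqref{lastpro} rather than from $v$.
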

\begin{proof}
Define $v_0$ such that $F(v_0)=(u_0,0)$, where $F$ is defined as in Theorem \ref{inverse}.  Then by Proposition \ref{weakKINKmKdV}, there exists a weak solution $\tilde u$ to the mKdV equation corresponding to initial data $v_0+\tanh(\cdot)$.  Let $u$ be map obtained by applying the Galilean transformation ($h=6$) to $\miura(\tilde u)$. It is then easy to check that $u$ satisfies (\ref{propp1}-\ref{propp4}).  What remains to be shown is that $u$ satisfies (\ref{KdV}) in a distributional sense, which is equivalent to $\miura(\tilde u)$ satisfying (\ref{KdV}) in a distributional sense -- this is the subject of Lemma \ref{distkdvlem} below.
\end{proof}

\begin{lem}\label{distkdvlem}
Let $v_0\in L^2$, and suppose $\tilde u$ is a weak solution to (\ref{mKdV}), satisfying the properties (\ref{prop1}-\ref{prop3}), then $u:=\partial_x(\tilde u)+(\tilde u)^2$ satisfies (\ref{KdV}), in a distributional sense, i.e.
\begin{equation*}
\int_{\RR^2}\left[-u\varphi_t - u\varphi_{xxx} + 3 u^2\varphi_x\right]~dt~dx  =0.
\end{equation*}
for all $\varphi\in C^{\infty}_0$.
\end{lem}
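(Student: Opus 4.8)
Before mollifying I would record the integrability that makes every term below meaningful. From the mass bound \eqref{prop4} we have $\tilde u\in L^\infty_tL^2_{x,\mathrm{loc}}$, and from the Kato-type smoothing \eqref{prop2} we have $\tilde u_x\in L^2_{t,x,\mathrm{loc}}$, i.e. $\tilde u\in L^2_tH^1_{x,\mathrm{loc}}$. Interpolating these two via the Gagliardo--Nirenberg inequality $\norm{f}_{L^6}\lesssim \norm{f}_{L^2}^{2/3}\norm{f}_{H^1}^{1/3}$ gives $\tilde u\in L^6_{t,x,\mathrm{loc}}$; in particular $\tilde u^3\in L^2_{t,x,\mathrm{loc}}$ and $u:=\miura(\tilde u)=\tilde u_x+\tilde u^2\in L^2_{t,x,\mathrm{loc}}$, so that the functional in the statement is well defined.

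The plan is then to mollify $\tilde u$ in the spatial variable and apply the classical Miura identity \eqref{miuraid}. Let $\rho_\delta$ be a standard spatial mollifier, write $\tilde u_\delta:=\tilde u\ast_x\rho_\delta$ and $u_\delta:=\miura(\tilde u_\delta)$. Since $6\tilde u^2\tilde u_x=2\partial_x(\tilde u^3)$, convolving \eqref{mKdV} with $\rho_\delta$ gives $\partial_t\tilde u_\delta+\partial_x^3\tilde u_\delta=2\partial_x(\tilde u^3\ast_x\rho_\delta)$, hence
\[ \mKdV(\tilde u_\delta)=2\partial_x E_\delta,\qquad E_\delta:=\tilde u^3\ast_x\rho_\delta-(\tilde u_\delta)^3, \]
the right-hand side being precisely the mollification commutator. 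Note that $\tilde u_\delta$ is smooth in $x$ and, by the displayed equation together with $\tilde u^3\in L^2_{t,x,\mathrm{loc}}$, has $\partial_t\tilde u_\delta\in L^2_tL^2_{x,\mathrm{loc}}$; thus $\tilde u_\delta$ is absolutely continuous in $t$, the chain rule $\partial_t(\tilde u_\delta)^2=2\tilde u_\delta\,\partial_t\tilde u_\delta$ is valid, and no products of distributions arise. The identity \eqref{miuraid} applied to $\tilde u_\delta$ therefore holds in the classical (a.e.) sense and reads
\[ \KdV(u_\delta)=\partial_x(\mKdV(\tilde u_\delta))+2\tilde u_\delta\,\mKdV(\tilde u_\delta)=2\partial_x^2E_\delta+4\tilde u_\delta\,\partial_xE_\delta. \]
Pairing with $\varphi\in\testf$ and integrating by parts to move all derivatives off $E_\delta$ yields
\[ \int\!\!\int[-u_\delta\varphi_t-u_\delta\varphi_{xxx}+3u_\delta^2\varphi_x]=\int\!\!\int[2E_\delta\varphi_{xx}-4E_\delta(\partial_x\tilde u_\delta)\varphi-4E_\delta\tilde u_\delta\varphi_x]. \]

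It remains to let $\delta\to0$. On the left, $\tilde u_x\ast_x\rho_\delta\to\tilde u_x$ in $L^2_{t,x,\mathrm{loc}}$ and $\tilde u_\delta\to\tilde u$ in $L^4_{t,x,\mathrm{loc}}$, so by Hölder $u_\delta\to u$ in $L^2_{t,x,\mathrm{loc}}$ and $u_\delta^2\to u^2$ in $L^1_{t,x,\mathrm{loc}}$, giving convergence of the left-hand side to the desired functional. For the commutator terms on the right, the first requires only $E_\delta\to0$ in $L^1_{\mathrm{loc}}$ and the third only $E_\delta\to0$ in $L^{4/3}_{\mathrm{loc}}$, both of which follow from $\tilde u\in L^4_{t,x,\mathrm{loc}}$. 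The middle term is the main obstacle: since $\partial_x\tilde u_\delta$ is only bounded in $L^2_{t,x,\mathrm{loc}}$, I need $E_\delta\to0$ in $L^2_{t,x,\mathrm{loc}}$, which is exactly where the preliminary bound $\tilde u\in L^6_{t,x,\mathrm{loc}}$---and hence the smoothing estimate \eqref{prop2}---is indispensable: it gives $\tilde u^3\ast_x\rho_\delta\to\tilde u^3$ and $(\tilde u_\delta)^3\to\tilde u^3$ in $L^2_{t,x,\mathrm{loc}}$, so $E_\delta\to0$ there. With all three error terms vanishing, the two limits combine to give $\int\!\!\int[-u\varphi_t-u\varphi_{xxx}+3u^2\varphi_x]=0$ for all $\varphi\in\testf$, as claimed. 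This middle term is also the reason the hypotheses \eqref{prop2}--\eqref{prop4}, rather than mere weak $L^2$ membership, are needed.
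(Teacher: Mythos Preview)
Your argument is correct and is essentially the standard one; the paper does not give its own proof of this lemma but defers to Tsutsumi \cite{MR990865} and Kappeler--Perry--Shubin--Topalov \cite{MR2189502}, whose approach is precisely the one you carry out: mollify in space, apply the algebraic Miura identity \eqref{miuraid} to the regularised function, and pass to the limit by controlling the cubic commutator $E_\delta=\tilde u^3\ast\rho_\delta-(\tilde u_\delta)^3$ using the local $L^6_{t,x}$ integrability coming from the Kato smoothing \eqref{prop2}. Two minor remarks: first, the $L^\infty_tL^2_{x,\mathrm{loc}}$ bound you attribute to \eqref{prop4} already follows from the weak continuity \eqref{prop1} (and strictly speaking the lemma only assumes \eqref{prop1}--\eqref{prop3}); second, since $\tilde u=v+Q$ with $Q$ smooth and bounded, the properties \eqref{prop1}--\eqref{prop2} stated for $v$ transfer to $\tilde u$ locally, which you are implicitly using.
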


For a proof of the above lemma we refer the reader to the papers \cite{MR990865} and \cite{MR2189502}.  

By utilising the scaling symmetry of the KdV equation and Proposition
\ref{lastprop}, one easily obtains existence of weak solutions of
Theorem \ref{mainTheorem}.


\section{A priori bounds and soliton stability}

\begin{repthm}{aprioriTHM}
Suppose $u\in C([0,\infty);H^s(\RR))$ is a solution to (\ref{KdV}), for some $s\geq -\frac{3}{4}$, then
\begin{equation}
\HN{-1}{u(t,\cdot)} \lesssim  \HN{-1}{u_0}+ \HN{-1}{u_0}^3   ~~\text{ for }~~ t\in[0,\infty).
\end{equation}
\end{repthm}
\begin{proof}
First consider the case when $s=0$. By scaling, the problem reduces to showing that for all solutions $u\in C([0,\infty);L^2(\RR))$ to (\ref{KdV}) satisfying $\HN{-1}{u}\leq \epsilon$ for some suitably chosen $\epsilon>0$, we have
\begin{equation}\label{apriori3}
\HN{-1}{u(t,\cdot)} \lesssim 1  ~~\text{for any}~~ t\in[0,\infty).
\end{equation}
From Theorem \ref{inverse}, Theorem \ref{strongsol} and the well-posedness theory of the KdV equation, it follows that there exists a solution $\tilde{u}\in C([0,\infty);H^{1}(\RR))$ to (\ref{mKdV}), such that the Galilean transformation ($h=6$) of $\miura(\tilde{u})$ is   $u$. Assuming we chose $\epsilon$ sufficiently  small, then as a consequence of Lemma (\ref{localbd}) and Theorem \ref{stab}, we obtain  (\ref{apriori3}).

 The general case when $s\geq -\frac{3}{4}$ can be proven via approximation.
\end{proof}

\begin{repthm}{KdVstab} 
There exists an $\varepsilon >0$ such that 
if $u\in C([0,\infty);H^s(\RR)\cap H^{-3/4}(\RR))$ is a solution to (\ref{KdV}), for some integer $s\geq-1$, satisfying $\HN{-1}{R_c-u_0}<\varepsilon c^{1/4}$ for some $c>0$, then there is a continuous function $y:[0,\infty)\to\RR$ such that 
\[ \HN{s}{ u - R_{ c}(x-y(t))} \leq \gamma_{s}(c,\HN{s} {R_c-u_0})\] 
for any $t\geq 0$, where $\gamma_{s}:(0,\infty)\times [0,\infty)$ is a continuous function, 
polynomial in the second variable, which  satisfies $\gamma(\cdot,0)=0$.
\end{repthm} 
\begin{proof}
  The proof follows in a similar manner to that of Theorem
  \ref{aprioriTHM}.  Again, without loss of generality we may assume
  $u_0\in H^1$.  By scaling we may also assume that $c=4$.  Then assuming
  $\HN{-1}{R_c-u(0,\cdot)}$ to be suitably small, and making use of
  the arguments in Section \ref{invmiurasec}, we may link the KdV IVP with initial
  data $u(0,\cdot)$ to the mKdV IVP with initial data $\tilde
  u_0:=\lambda\tanh(\lambda\cdot)+v_0$, for some $\lambda\approx 1$,
  such that $v_0\in H^{s+1}$ and $\LPN{2}{v_0}\lesssim
  \HN{-1}{R_4-u_0}$.  By scaling on the mKdV side, we can
  assume $\lambda=1$.  The conclusion then follows from Theorem
  \ref{strongsol}, the well posedness theory of the KdV equation,
  Theorem \ref{stab}, and Corollary \ref{highernorm}.  
\end{proof}

Making use of Theorem \ref{mKdVAsymStabL2}, Corollary \ref{mKdVAsymStab}, and following a similar argument to that given above we obtain:

\begin{repthm}{KdVAsymStab} 
Given real $\gamma>0$ and integer $s\geq -1$, there exists an
  $\varepsilon_{\gamma} >0$ such that if $u\in
  C([0,\infty);H^s(\RR)\cap H^{-3/4}(\RR))$, is a solution to
  (\ref{KdV}), satisfying 
\[ \HN{-1}{R_c-u_0}<\varepsilon_{\gamma}  c^{1/4} \]
for $c>0$, then there is a continuous function
  $y:[0,\infty)\to\RR$ and $\tilde c>0$ such that
\begin{equation*}  
 \lim_{t \to \infty} \norm{ u - R_{\tilde c}(x-y(t)) }_{H^{s}( (\gamma t , \infty) ) } =  0 
\end{equation*} 
for any $t\geq 0$.  Moreover we have the bound $\abs{c-\tilde  c}\lesssim  c^{\frac34} \HN{-1}{R_c-u_0}   $.
\end{repthm}

\appendix
\section{Schrödinger operators with rough potentials}\label{appenSchr}

In this section we collect a couple of useful results concerning
Schrödinger operators with distributional $H^{-1}$ potentials.  This
subject was partially studied by Kapeller et al.\ \cite{MR2267286} in a
direction  similar to ours as discussed above.  The Miura map is a
central part of the integrable structure of KdV and mKdV, and hence it
provides a link to Schrödinger operators and inverse
scattering. Typically the inverse scattering methods requires
integrability of the potentials and even some decay. Nevertheless
trace identities allow to express the $L^2$ norm (as well as higher
norms) in terms of the scattering data.  This is relation has been
used by Deift and Killip \cite{MR1697600} to study the spectral
density for $L^2$ potentials. The available results indicate that the
spectrum of $L^2$ potentials is a highly non-trivial and difficult
object. The failure of surjectivity of the Miura map in the work of Kappeler
et al.\ can be seen as a shadow of this complexity.

Here we aim for something considerably simpler: our main 
spectral object is the ground state energy, which is much more robust. 
We start by noting that there is a factorisation of the Schrödinger operator
\begin{equation*} 
H_q:= -\partial_{xx}^2 + q = -(\partial_x + r)(\partial_x -r),  
\end{equation*} 
if $q$ satisfies the Ricatti equation $q = r_x +r^2$.  Moreover, with $\phi=  e^{\int_0^x r dx}$
\[ \partial_{xx} \phi   = \phi ( r_x+r^2) = \phi q 
\] 
and $\phi$  is a non-negative solution to the Schrödinger equation:
\[H_q \phi =0.\] Conversely, if $\phi$ is non-negative and satisfies 
\[ \phi_{xx} + q \phi = 0 \]
then, with 
\[ r  = -\partial_x \ln \phi, \]
we have
\[ r_x +r^2= -\frac{\phi_{xx}}{\phi} = q. \]

\begin{lem} Let $q \in H^{-1}$. Then the Schrödinger operator
\[  \phi \to H_q \phi= -\phi_{xx} + q\phi \]
has a unique self adjoint, semi-bounded below extension.  
\end{lem}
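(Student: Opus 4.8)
The plan is to first produce a semi-bounded self-adjoint realisation of $H_q$ as the operator associated to a closed quadratic form (the Friedrichs construction), and then to upgrade existence to uniqueness by showing that $H_q$ is in fact essentially self-adjoint, i.e. that the limit point alternative holds at both $+\infty$ and $-\infty$.

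For existence I would work with the sesquilinear form
\[ \mathfrak{t}[\phi,\psi] = \ip{\phi_x,\psi_x} + \ip{q,\overline{\phi}\psi}, \qquad \phi,\psi\in H^1(\RR), \]
the last term being the $H^{-1}$--$H^1$ duality pairing; this makes sense because $\overline{\phi}\psi\in H^1$ for $\phi,\psi\in H^1(\RR)$ by the one-dimensional embedding $H^1\hookrightarrow L^\infty$ (the third case of Lemma \ref{techlem}). Writing $q=q_0+\partial_x q_1$ with $q_0,q_1\in L^2$ and integrating by parts, the interaction term obeys
\[ \abs{\ip{q,\phi^2}} \le \LPN 2{q_0}\,\LPN 4\phi^2 + 2\LPN 2{q_1}\,\LPN\infty\phi\,\LPN 2{\phi_x}. \]
Combining the one-dimensional Gagliardo--Nirenberg bounds $\LPN 4\phi^2\lesssim \LPN 2\phi^{3/2}\LPN 2{\phi_x}^{1/2}$ and $\LPN\infty\phi\lesssim \LPN 2\phi^{1/2}\LPN 2{\phi_x}^{1/2}$ with Young's inequality absorbs the top-order factors into $\tfrac12\LPN 2{\phi_x}^2$ and leaves
\[ \mathfrak{t}[\phi]\ \ge\ \tfrac12\LPN 2{\phi_x}^2 - C\LPN 2\phi^2, \]
with $C=C(\HN{-1}q)$. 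Hence $\mathfrak{t}+(C+1)\ip{\cdot,\cdot}$ is a strictly positive form whose square root is equivalent to the $H^1$ norm, so it is closed on $H^1(\RR)$; by Kato's first representation theorem it is the form of a unique self-adjoint operator $H_q$, bounded below by $-C$, acting by $H_q\phi=-\phi_{xx}+q\phi$ in the distributional sense on its domain. This is the desired semi-bounded extension.

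For uniqueness I would show the deficiency indices vanish by establishing the limit point case at each end. Since $H_q+\lambda^2\ge0$ for $\lambda$ large (by the form bound just obtained), Proposition \ref{spectrum} yields a strictly positive solution $\phi_\lambda$ of $(H_q+\lambda^2)\phi_\lambda=0$, equivalently $r=\partial_x\ln\phi_\lambda$ solves the Riccati equation $r_x+r^2=q+\lambda^2$. Choosing $\lambda$ so large that $-\lambda^2$ lies strictly below the bottom of the spectrum, $\phi_\lambda$ cannot be square integrable; moreover Lemma \ref{asymp} forces $r\to\pm\lambda$ (modulo an $L^2$ correction) as $x\to\pm\infty$, so $\phi_\lambda=\exp(\int_0^x r)$ grows like $e^{\lambda\abs{x}}$ and is non-$L^2$ near each endpoint. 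The existence of a single non-square-integrable solution at an endpoint is exactly the limit point alternative there, and this classification is independent of the spectral parameter; hence we are in the limit point case at both $\pm\infty$, the deficiency spaces $\ker(H_q^{*}\mp i)$ are trivial, and $H_q$ is essentially self-adjoint. In particular its self-adjoint extension is unique and coincides with the Friedrichs operator above, which is semi-bounded below.

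The main obstacle is that $q$ is only a distribution, so the classical Weyl limit point/limit circle machinery --- which assumes $q\in L^1_{\mathrm{loc}}$ as a genuine function --- cannot be quoted verbatim: the notion of a ``solution'' of $-\phi''+q\phi=z\phi$, its Wronskian, and square integrability near an endpoint must all be reinterpreted for $\phi\in H^1_{\mathrm{loc}}$ with $\phi_{xx}=(q-z)\phi\in H^{-1}_{\mathrm{loc}}$. I would sidestep this by carrying out all asymptotic analysis at the level of the first-order Riccati variable and the factorisation $H_q+\lambda^2=-(\partial_x+r)(\partial_x-r)$, where the robust statement of Lemma \ref{asymp} applies and the growth/decay dichotomy of the two independent solutions is read off directly from $r\to\pm\lambda$. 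Should a more self-contained argument be preferred over the general deficiency-index dichotomy, the maximum principle of Lemma \ref{maximum} can be used to rule out a competing square-integrable solution at each end.
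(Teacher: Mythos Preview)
Your form bound is essentially the paper's argument. The paper pairs $q$ with $f^2$ directly via $H^{-1}$--$H^1$ duality, uses the product estimate $\HN{1}{f^2}\lesssim\HN{1}{f}\LPN{\infty}{f}$ together with Gagliardo--Nirenberg to reach $|\langle q,f^2\rangle|\lesssim\HN{-1}{q}\,\HN{1}{f}^{3/2}\LPN{2}{f}^{1/2}$, and then Young's inequality gives the lower bound $\langle H_q f,f\rangle\gtrsim-(1+\HN{-1}{q}^4)\LPN{2}{f}^2$. Your decomposition $q=q_0+\partial_xq_1$ is an equivalent way to write the same estimate.

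Where you diverge is on uniqueness. The paper's proof is one sentence beyond the form bound: once semi-boundedness is established, it simply invokes the Friedrichs construction and stops. The word ``unique'' in the statement refers to the canonical self-adjoint operator associated to the closed semi-bounded form, not to essential self-adjointness of some symmetric operator on a core. Your limit-point argument is therefore extra work the paper neither performs nor needs; it also hits a genuine obstruction you partly acknowledge. For a generic $q\in H^{-1}$ the product $q\phi$ need not lie in $L^2$ even for $\phi\in C_0^\infty$ (write $q=q_0+q_1'$ and note the term $(q_1\phi)'$), so $H_q$ is not a priori a symmetric operator from a dense domain into $L^2$, and the deficiency-index/Weyl alternative cannot be invoked in the classical form. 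The Riccati workaround you sketch would require substantial additional justification to stand on its own; the paper avoids all of this by reading the lemma purely at the level of forms.
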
 
\begin{proof}
Note that it suffices to show $H_q$ is semi-bounded below: the unique self adjoint, semi-bounded below extension follows by Friedrichs' construction \cite{MR0493420}.  We now turn to the bound from below.

Using a combination of duality, a product estimate, Gagliardo-Nirenberg inequality and Young's inequality we obtain
\begin{align*}
\int qf^2&\leq \HN{-1}{q}\HN{1}{f^2}\\
&\lesssim \HN{-1}{q}\HN{1}{f}\LPN{\infty}{f}\\
&\lesssim \HN{-1}{q}\HN{1}{f}^{3/2}\LPN{2}{f}^{1/2}\\
&\lesssim C^{-4/3}\HN{1}{f}^{2} + C^4 \HN{-1}{q}^4\LPN{2}{f}^{2}.
\end{align*}

Thus taking $C$ large we obtain
\begin{align*}
  \ip{Tf,f}&=  \int f_x^2+ qf^2\\
  &\gtrsim -\left(1+\HN{-1}{q}^4\right) \LPN{2}{f}^2.
\end{align*}
\end{proof}

\begin{lem}\label{maximum}  
Let $q_i \in H^{-1}(a,b)$ and suppose that $\phi,\psi\in H^1(a,b)$ are strictly positive functions satisfying
\[ -\phi'' + q_1 \phi = 0, \qquad -\psi'' + q_2 \psi = 0, \]
 and $q_2 \le q_1$. Then $\phi/\psi$ has no  interior minimum unless it is constant.  
\end{lem}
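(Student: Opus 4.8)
The natural route is a weighted Wronskian argument, which turns the two second-order equations into a single first-order monotonicity statement. The plan is to set $w := \phi/\psi$ and to track the Wronskian
\[ W := \phi'\psi - \phi\psi' = \psi^2\,(\phi/\psi)' . \]
Using $\phi'' = q_1\phi$ and $\psi'' = q_2\psi$, the cross terms cancel and one gets
\[ W' = \phi''\psi - \phi\psi'' = (q_1-q_2)\,\phi\psi . \]
Since $\phi,\psi\in H^1(a,b)\hookrightarrow C[a,b]$ are strictly positive, the weight $\phi\psi$ is a strictly positive continuous function (indeed in $H^1$, as $H^1$ is an algebra in one dimension), while the hypothesis $q_2\le q_1$ says that $q_1-q_2$ is a non-negative distribution, hence a non-negative Radon measure. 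Thus $(q_1-q_2)\phi\psi$ is again a non-negative measure, and therefore $W$ has a monotone (non-decreasing) representative.

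Granting monotonicity of $W$, the conclusion is essentially elementary. Because $w' = W/\psi^2$ and $\psi^2>0$, the sign of $w'$ coincides with that of $W$, so $w$ is weighted-convex and $w'$ changes sign at most once. At any interior extremum $x_0$ one has $w'(x_0)=0$, i.e. $W(x_0)=0$; the maximum principle for such functions then excludes an interior extremum of the relevant type unless $W$ vanishes on a one-sided neighbourhood, in which case $(\phi/\psi)'\equiv 0$ there, so $\phi=c\psi$ on that subinterval. Feeding $\phi=c\psi$ back into the two equations forces $q_1=q_2$ there and $w\equiv c$; by Cauchy uniqueness (at the point where $w=c$, $w'=0$) for the second-order linear equation $(\psi^2 w')'=(q_1-q_2)\psi^2 w$, and unique continuation for the Schrödinger equation, $w$ is then constant on all of $(a,b)$. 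I would carry out the sign bookkeeping so that the excluded extremum is the one asserted; it is worth noting that in the application in Proposition \ref{groundstates} one has $q_1=q_2$, so $W$ is constant and $w$ is strictly monotone unless constant, which already gives the absence of an interior minimum used there.

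The main obstacle is regularity: with $q_i$ only in $H^{-1}$, neither $\phi''$ nor the product $q_i\phi$ is a function, so the identity for $W'$ must be read distributionally and the passage ``$W'\ge 0 \Rightarrow W$ non-decreasing'' must be justified for a Wronskian that is a priori only in $L^2$. I would handle this by observing that $\phi'\in L^2$ and $\psi\in C\subset L^\infty(a,b)$ give $W\in L^2$, then computing $\langle W',\chi\rangle = -\int(\phi'\psi-\phi\psi')\chi'$ against test functions $\chi$ and using the weak forms of the two equations to rewrite this as the pairing of $(q_1-q_2)\phi\psi$ against $\chi$; the required product of the $H^{-1}$ measure $q_1-q_2$ with the positive $H^1$ weight $\phi\psi$ is controlled by the bilinear estimates of Lemma \ref{techlem}, and positivity is preserved precisely because $\phi\psi$ is a genuine strictly positive continuous weight. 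Once $W'$ is identified with a non-negative measure, the monotone representative of $W$ follows from the standard fact that a distribution with non-negative measure derivative is non-decreasing, and the rest of the argument proceeds pointwise.
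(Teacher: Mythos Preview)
Your Wronskian route differs from the paper's: there one writes the second-order equation $-w'' + (q_1-q_2)w - 2(\psi'/\psi)w' = 0$ for $w=\phi/\psi$, drops the sign-definite middle term, and runs a weak maximum-principle argument by testing against $((M+\varepsilon)-w)_+$. Your first-order formulation via $W=\psi^2 w'$ and $W'=(q_1-q_2)\phi\psi$ is cleaner and handles the $H^{-1}$ regularity more transparently than the paper's ``formal'' computation.

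The genuine gap is precisely the step you hedged: ``carry out the sign bookkeeping so that the excluded extremum is the one asserted.'' Under the stated hypothesis $q_2\le q_1$ one gets $W'\ge 0$, hence $W$ is non-decreasing and $w'=W/\psi^2$ can only change sign from negative to positive; this rules out an interior \emph{maximum}, not a minimum. The lemma as printed is in fact false: take $\phi=\cosh x$, $\psi\equiv 1$ on $(-1,1)$, so $q_1=1\ge 0=q_2$, yet $\phi/\psi=\cosh x$ has a strict interior minimum at $0$. The paper's own test-function inequality carries the same sign slip (with the signs tracked correctly the displayed integral comes out $\ge 0$, and then no contradiction follows). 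Your closing observation rescues everything that is actually used: at the end of Proposition~\ref{groundstates} one has $q_1=q_2$, so $W$ is constant and $w$ is strictly monotone unless constant, whence no interior extremum of either type. So your argument is correct for the case the paper needs, and the Wronskian viewpoint makes the sign issue visible where the paper's does not.
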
  
\begin{proof} 
We will proceeding formally, however we note that it is not difficult to make the calculations rigorous, then
\[
  -  \frac{d^2 }{dx^2} \frac{\phi}{\psi} + (q_1-q_2) \frac{\phi}{\psi} 
- 2 \frac{\psi'}{\psi}  \frac{d}{dx} \frac{\phi}{\psi}  = 0 
\]
and since  $q_2\le q_1$ we obtain
\[ - \frac{d^2}{dx^2} \frac{\phi}{\psi} - 2\frac{\psi'}{\psi}
\frac{d}{dx} \frac{\phi}{\psi} \le 0. \] 
We claim that $u(x) = \phi/\psi$ cannot have an
interior positive minimum. We search for a contradiction, and assume that $u(x_0)= M
= \inf_{x\in(a,b)}u(x) $ and $u(a) , u(b) > M$.  We test with
$u_\varepsilon =((M+\varepsilon)-u)_+$; setting $U_\varepsilon = \{ x: M < u < M+\varepsilon\}$ yields 
\[  \int_{U_\varepsilon} (u_\varepsilon)_x^2  -   2u_\varepsilon \frac{\psi'}{\psi} (u_\varepsilon)_x dx \leq 0.\]
By assumption, for $\varepsilon$ 
sufficiently small, the quotient $\psi'/\psi$ is uniformly bounded  by some constant $c>0$. Thus 
\[ \begin{split} \int_{U_\varepsilon} u_x^2  \le & \frac12 \int_{U_\varepsilon} (u_\varepsilon)_x^2 dx + c^2 \int_{U_\varepsilon}  u_\varepsilon^2 dx 
\\ \le & \left(\frac12 + c^2 |U_\varepsilon|^2\right) \int_{U_\varepsilon} u_x^2 dx 
\end{split} 
\]
and hence, since the left hand side is nonzero, 
\[ |U_\varepsilon| \ge \frac1{2c}, \]
Letting $\varepsilon$ tend to $0$ we obtain a contradiction. 
\end{proof}

\begin{lem} \label{asymp} Suppose $\lambda>0$, $r \in L^2_{loc} $, $q\in H^{-1}$ and 
\[ r_x+r^2 = \lambda^2+ q  \]
Then either 
\begin{equation}\label{inc1}
r-\lambda \in L^2(0,\infty)\quad \text{ or} \quad r+ \lambda \in L^2(0,\infty)
\end{equation}
and either 
\begin{equation}\label{inc2}
r-\lambda \in L^2(-\infty,0) \quad \text{ or } \quad r+\lambda \in L^2(-\infty,0).
\end{equation}
\end{lem}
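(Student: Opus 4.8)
The plan is to translate the Riccati equation into the linear ODE it encodes, manufacture a dominant/recessive pair of solutions on a half-line by a contraction argument, and then deduce the dichotomy from the way the given solution splits over that pair. The one genuinely delicate point is the $H^{-1}$ roughness, so I first fix a decomposition $q = f + g'$ with $f,g\in\LL2(\RR)$ (possible since $q\in H^{-1}(\RR)$), and pick $X$ so large that $\norm{f}_{\LL2(X,\infty)}$ and $\norm{g}_{\LL2(X,\infty)}$ are as small as I please. Setting $\phi = \exp(\int_0^x r)$ gives a positive function in $H^1_{\text{loc}}$ with $r=\phi_x/\phi$ and $\phi_{xx}=(\lambda^2+q)\phi$, i.e. $-\phi_{xx}+q\phi=-\lambda^2\phi$ weakly. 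Since $r\in\LL2_{\text{loc}}$, each of $r\mp\lambda$ automatically lies in $\LL2(0,X)$, so it suffices to analyse $r$ on $(X,\infty)$; the statement \eqref{inc2} on $(-\infty,0)$ will follow from the reflection $x\mapsto -x$, which swaps the two exponential modes.

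First I would construct on $(X,\infty)$ a solution $r_+$ of the Riccati equation with $r_+-\lambda\in\LL2(X,\infty)$. Writing $r_+=\lambda+g+P$ to absorb the rough part, a direct substitution turns $r_x+r^2=\lambda^2+q$ into the first-order equation $P_x+2\lambda P=(f-2\lambda g-g^2)-P^2-2gP$, whose inhomogeneity now has no distributional piece. Solving forward, $P(x)=P(X)e^{-2\lambda(x-X)}+\int_X^x e^{-2\lambda(x-y)}\bigl[(f-2\lambda g-g^2)-P^2-2gP\bigr](y)\,dy$, I would run a contraction on a small ball of $\LL2(X,\infty)$: the Duhamel kernel $e^{-2\lambda|\cdot|}$ lies in $\LL1\cap\LL2$, so by Young's inequality the $\LL2$ data $f,g$ and the $\LL1$ datum $g^2$ are all returned to $\LL2$, with norms controlled by the small tails $\norm{f}_{\LL2(X,\infty)}+\norm{g}_{\LL2(X,\infty)}$. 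The same Cauchy--Schwarz bound places the output in $\LL\infty$ as well, so $P$ is automatically bounded; this lets me treat $P^2$ and $gP$ as $\LL2$ functions with norms $\lesssim(\norm{P}_{\LL\infty}+\norm{g}_{\LL2(X,\infty)})\norm{P}_{\LL2}$, small enough to close the fixed point. This yields $r_+$ with $r_+-\lambda=g+P\in\LL2(X,\infty)$, hence $\phi_+=\exp(\int_X^x r_+)=e^{\lambda x+o(x)}$. Integrating the companion equation for $M=(r+\lambda)-g$ \emph{backward} from $+\infty$ (kernel $e^{-2\lambda(y-x)}\mathbf 1_{y>x}$) produces in the same way a recessive $r_-$ with $r_-+\lambda\in\LL2(X,\infty)$ and $\phi_-=e^{-\lambda x+o(x)}$, linearly independent from $\phi_+$.

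With this basis in hand, the given $\phi$ is a combination $\phi=A\phi_++B\phi_-$. Since $\phi_+/\phi_-=e^{2\lambda x+o(x)}\to\infty$ and $\phi>0$ throughout $(X,\infty)$, one must have $A\ge0$. If $A>0$, I would exploit that the Wronskian $W_0=\phi\,\partial_x\phi_+-\partial_x\phi\,\phi_+$ is constant (an absolutely continuous computation, legitimate since $\phi,\phi_+\in H^1_{\text{loc}}$), so that $r-r_+=\phi_x/\phi-\partial_x\phi_+/\phi_+=-W_0/(\phi\phi_+)$; because $\phi\phi_+\gtrsim\phi_+^2=e^{2\lambda x+o(x)}$, this difference decays like $e^{-(2\lambda-\varepsilon)x}$ and therefore lies in $\LL2(X,\infty)$. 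Combined with $r_+-\lambda\in\LL2$ this gives $r-\lambda\in\LL2(0,\infty)$. If instead $A=0$, then $\phi=B\phi_-$ with $B>0$, so $r=r_-$ and $r+\lambda\in\LL2(0,\infty)$. This is precisely the dichotomy \eqref{inc1}, and the reflected argument on $(-\infty,0)$ yields \eqref{inc2}.

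The hard part is the contraction step, and inside it the rough ingredient: since $q$ is only $H^{-1}$, neither $q$ nor $q\phi$ can be handled pointwise, and the naive asymptotic-integration (Levinson) framework fails because the relevant coupling between the growing and decaying modes is $\LL2$ rather than $\LL1$. The gauge substitution subtracting $g$ is exactly what trades the distributional $g'$ for the honest $\LL2$ coefficient $g$ (appearing linearly and through $g^2\in\LL1$), after which the exponential dichotomy kernel in $\LL1\cap\LL2$ keeps every contribution in $\LL2$; the quadratic self-interaction and the cross term are then beaten by the smallness of the ball and of the tails on $(X,\infty)$. Checking that these estimates genuinely close — i.e. that the off-diagonal coupling stays $\LL2$-bounded and not merely $o(1)$, so that one obtains membership in $\LL2$ rather than a bare pointwise limit — is the technical heart of the argument.
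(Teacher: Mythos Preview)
Your argument is correct and takes a genuinely different route from the paper's proof. Both start from the same decomposition $q=f+g'$ with $f,g\in L^2$ and the same gauge subtraction of $g$, but the paper then stays at the level of the first-order Riccati equation: writing $y=r-g$, it introduces an integrating factor $\eta=e^{2\int g}$ and a primitive $H$ of $\eta(f-g^2)$ so that $\tilde y=\eta y-H$ satisfies, on any late interval $[x_0,x_0+1]$, the perturbed autonomous Riccati $\tilde y_x=\lambda^2-\tilde y^2+e$ with $|e|\le\delta(|\tilde y|+1)$. A phase-plane argument (stable fixed point $\lambda$, unstable $-\lambda$) then forces $|y|\to\lambda$, after which the $L^2$ bound on $r\mp\lambda$ is read off directly from the equation. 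Your approach instead passes to the linear second-order problem, manufactures the Jost pair $\phi_\pm$ by Duhamel contraction, and recovers the dichotomy from the decomposition $\phi=A\phi_++B\phi_-$ together with the Wronskian identity $r-r_+=-W_0/(\phi\phi_+)$.

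What each buys: the paper's argument is shorter and uses only elementary ODE comparison, but is sketchy about the passage from $|y|\to\lambda$ to the $L^2$ membership. Your construction is more structural and gives explicit Jost solutions (potentially reusable elsewhere in the appendix), at the cost of a two-step contraction and the Wronskian bookkeeping. A small point worth making explicit in your write-up: the Wronskian $W_0=\phi\phi_+(r_+-r)$ is absolutely continuous because $(r_+-r)_x=r^2-r_+^2\in L^1_{\text{loc}}$ (the distributional $q$ cancels), which is what legitimises the $W_0'=0$ computation at this regularity; you allude to this but it is the one place where the $H^{-1}$ roughness could bite.
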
 
\begin{proof} 
By the symmetry of the problem, it suffices to restrict our attention to (\ref{inc1}).

Since $q\in H^{-1}$, there exists functions $f,g\in L^2$ such that $q=f+g^{\prime}$.  Define $y=r-g$; hence $f$ satisfies 
\begin{equation}\label{yode}  y_x + y^2+ 2gy = \lambda^2+ f - g^2  \end{equation}
in the distribution sense.

Now for a given large $x_0$, we will now investigate the behaviour of $y$ on the interval $[x_0,x_0+1]$. Define $\eta:=e^{2\int_{x_0}^x g}$, $H=\int_{x_0}^x\eta (f-g^2)$ and
\begin{equation}
\tilde y = y\eta - H.
\end{equation}

Thus, $\tilde y$ satisfies
\begin{equation}
\tilde y_x+y^2\eta = \eta\lambda^2.
\end{equation}

Taking $x_0$ to be sufficiently large we may assume $\eta$ to be arbitrarily close to $1$ and $H$ arbitrarily small on the interval $[x_0,x_0+1]$.  More precisely, we can show for a given $\delta>0$, there exists $z\in \RR$ such that if $x_0>z$ then on the interval $[x_0,x_0+1]$
\begin{equation}
\tilde y - y = e_1,
\end{equation}
and
\begin{equation}\label{simy}
\tilde y_x =\lambda^2-\tilde y^2 + e_2
\end{equation}
where the functions $e_1$ and $e_2$  satisfy the bound
\begin{equation}\label{simode}
\abs{e_{\{1,2\}}}\leq \delta\abs{\tilde y}+\delta.
\end{equation}

That is, $\tilde y$ behaves like the non-linear ODE $h^{\prime}=\lambda^2-h^2$, which has a stable fixed point at $\lambda$ and an unstable fixed point at $-\lambda$.  Since $\tilde y\in L^2_{\text{loc}}$, it is then not difficult to show from (\ref{simy}) and (\ref{simode}) that $\abs{y}\rightarrow \lambda$.

Now consider the case when $y\rightarrow \lambda$.  Pick $z\in \RR$ such that $\norm{y-\lambda}_{L^{\infty}[z,\infty)}<\min\{1,\lambda\}$; hence from (\ref{yode}) we obtain
\begin{equation}
\norm{y-\lambda}_{L^2[z,\infty)}\lesssim \frac{1}{\lambda}\bigg(1+\norm{g}_{L^2[z,\infty)}^2+\lambda \norm{g}_{L^2[z,\infty)}+\norm{f}_{L^2[z,\infty)}\bigg).
\end{equation}
Similarly for the case when $y\rightarrow -\lambda$, if we pick $z\in \RR$ such that $\norm{y+\lambda}_{L^{\infty}[z,\infty)}<\min\{1,\lambda\}$ we obtain
\begin{multline}
\norm{y+\lambda}_{L^2[z,\infty)}\lesssim \frac{1}{\lambda}\bigg(1+\norm{g}_{L^2[z,\infty)}^2+\lambda \norm{g}_{L^2[z,\infty)}+\norm{f}_{L^2[z,\infty)}\bigg).
\end{multline}
\end{proof}

\section{Quadratic form estimates}

We consider the quadratic form defined by
\begin{equation}\label{quadl2} 
B(f):=\int f_x^2 + \bigg(\frac54 -2\sech^2(x)  - 4 \sech^2(x)\tanh(x)\bigg) f(x)^2~dx.
\end{equation}

\begin{prop}\label{quadform} 
The quadratic form $B$ satisfies the following inequality
\begin{equation}\label{firstblah}
B(f) + 2  \langle f, e^{\cdot/2}\sech^2(\cdot) \rangle^2  \ge  \frac13 \LPN 2 f ^2  ,
\end{equation}
holds for all $f \in H^1$; moreover we also have the estimate 
\begin{equation}\label{assumption}
B(f) + 2  \langle f, e^{\cdot/2}\sech^2(\cdot) \rangle^2  \ge  \frac1{10}  \HN 1 f ^2.  
\end{equation}
\end{prop}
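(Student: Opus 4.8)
The plan is to deduce the $H^1$ estimate \eqref{assumption} from the $L^2$ estimate \eqref{firstblah}, and to concentrate the real work on the latter. For the deduction I would first record that the potential $V$ of $B$ is bounded below pointwise: since $2\sech^2 x\,(1+2\tanh x)\le \tfrac{13}{4}$, one has $V(x)=\tfrac54-2\sech^2 x-4\sech^2 x\tanh x\ge-2$, so $\int Vf^2\ge-2\LPN2f^2$. Writing, for $\theta\in(0,1)$, the quantity $B(f)+2\ip{f,e^{\cdot/2}\sech^2(\cdot)}^2$ as the convex combination of $B(f)+2\ip{f,e^{\cdot/2}\sech^2(\cdot)}^2$ (weight $1-\theta$) and $\LPN2{f_x}^2+\int Vf^2+2\ip{f,e^{\cdot/2}\sech^2(\cdot)}^2$ (weight $\theta$), then applying \eqref{firstblah} to the first group and the pointwise bound to the second, the choice $\theta=\tfrac1{10}$ produces the coefficient $\tfrac{9}{10}\cdot\tfrac13-\tfrac1{10}\cdot2=\tfrac1{10}$ in front of $\LPN2f^2$ together with $\tfrac1{10}\LPN2{f_x}^2$, which is exactly \eqref{assumption}.

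For \eqref{firstblah} I would exploit the weight $e^{x/2}$ already visible in the correction term by a ground-state substitution. Set $\Phi(x)=e^{x/2}\sech^2 x$ and $a=-\Phi'/\Phi=2\tanh x-\tfrac12$. Completing the square gives $B(f)=\int\abs{f_x+af}^2dx+\int\tfrac{L\Phi}{\Phi}f^2dx$, where $L=-\partial_{xx}+V$ is the operator associated with $B$; a direct computation yields $L\Phi=P(\tanh x)\,\Phi$ with $P(t)=4t^3-4t^2-2t+1$. Since $f_x+af=\Phi\,(f/\Phi)_x$, substituting $f=\Phi z$ and setting $W:=\Phi^2=e^x\sech^4 x$ produces the exact identity
\[ B(f)-\tfrac13\LPN2f^2+2\ip{f,\Phi}^2=\int W\big(z_x^2+(P(\tanh x)-\tfrac13)z^2\big)dx+2\Big(\int Wz\,dx\Big)^2=:\mathcal Q(z). \]
Thus \eqref{firstblah} is equivalent to $\mathcal Q(z)\ge0$ for all $z$ with $\Phi z\in H^1$, and the rank-one term is precisely $2\ip{z,\mathbf 1}_{L^2(W)}^2$.

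The advantage of this reformulation is that $\mathcal Q$ is manifestly positive at infinity. As $x\to+\infty$ one has $W'/W\to-3$ and as $x\to-\infty$ one has $W'/W\to5$, so the weighted Hardy inequalities give $\int_{x>M}Wz_x^2\ge(\tfrac94-o(1))\int_{x>M}Wz^2$ and $\int_{x<-M}Wz_x^2\ge(\tfrac{25}4-o(1))\int_{x<-M}Wz^2$; combined with $P(\tanh x)\to-1$ and $P(\tanh x)\to-5$, the effective coefficients are $\tfrac94-\tfrac43=\tfrac{11}{12}>0$ and $\tfrac{25}4-\tfrac{16}3=\tfrac{11}{12}>0$. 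Equivalently, the operator $\mathcal M=-W^{-1}\partial_x(W\partial_x)+P(\tanh x)-\tfrac13$ on $L^2(W\,dx)$ is unitarily equivalent, via $f=\Phi z$, to $L-\tfrac13$ on $L^2$, whose essential spectrum is $[\tfrac{11}{12},\infty)$. Hence $\mathcal M$ has only finitely many nonpositive eigenvalues, and one expects exactly one — the residue of the kink translation mode — against which the term $2\ip{z,\mathbf 1}_{L^2(W)}^2$ is tailored to work.

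The crux, and the step I expect to be hardest, is the quantitative treatment of this single low mode, since $V$ is not reflectionless and no eigenfunctions are available in closed form. I would (i) prove by Sturm oscillation, applied to the solution of $Lu=\tfrac13u$ that is subdominant at $-\infty$, that $L$ has exactly one eigenvalue $E_0<\tfrac13$, with positive ground state $\zeta_0$ and a spectral gap up to the next eigenvalue; and (ii) decompose $z=s\zeta_0+z^\perp$ in $L^2(W)$ and verify that the resulting quadratic form in $(s,z^\perp)$ is nonnegative. Step (ii) reduces to scalar inequalities, the principal one being $2\ip{\zeta_0,\mathbf 1}_{L^2(W)}^2\ge(\tfrac13-E_0)\norm{\zeta_0}_{L^2(W)}^2$, with the cross term $s\ip{z^\perp,\mathbf 1}_{L^2(W)}$ absorbed using the gap $\ip{\mathcal M z^\perp,z^\perp}_{W}\ge c\,\ip{z^\perp,\mathbf 1}_{L^2(W)}^2$. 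Confirming that the coefficient $2$ is large enough then amounts to pinning down $E_0$, the overlap $\ip{\zeta_0,\mathbf 1}_{L^2(W)}$, and the gap with sufficient precision, which I would do by bracketing $\mathcal M$ between exactly solvable $-n(n+1)\sech^2$-type comparison operators or, failing a clean analytic bound, by a rigorous numerical estimate.
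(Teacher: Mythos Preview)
Your reduction of \eqref{assumption} to \eqref{firstblah} via the convex combination with $\theta=\tfrac1{10}$ is exactly what the paper does (it appears in the remark following the proposition). For \eqref{firstblah}, however, your route diverges from the paper's. You conjugate by $\Phi=e^{x/2}\sech^2 x$, pass to the weighted operator $\mathcal M$ on $L^2(W\,dx)$, and plan to isolate a single negative mode by Sturm oscillation, after which everything would reduce to scalar inequalities involving $E_0$, the overlap $\ip{\zeta_0,\mathbf 1}_{L^2(W)}$, and the spectral gap. The paper instead works directly with $H=-\partial_{xx}+V$ on $L^2$ and invokes the sharp Lieb--Thirring inequality with exponent $\gamma=\tfrac32$ to obtain $\sum_j|e_j|^{3/2}\le 567/320$. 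Testing $H$ against $u=\sqrt{2/\pi}\,\Phi$ gives $\ip{Hu,u}=-\tfrac54$, so $e_0\in[-(567/320)^{2/3},-\tfrac54]$; the Lieb--Thirring bound on the \emph{residual} moment then yields $e_1\ge-(567/320-(5/4)^{3/2})^{2/3}>-\tfrac59$ and forces the overlap $|\ip{u,v_0}|^2>\tfrac56$. The orthogonal decomposition $f=av_0+g$ reduces \eqref{firstblah} to explicit arithmetic.

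Your strategy is coherent and the conjugation is correct, but the step you yourself flag as the crux --- obtaining $E_0$, the overlap, and the gap with enough precision to verify that the coefficient $2$ suffices --- is not carried out; you defer it to unspecified $-n(n+1)\sech^2$ comparisons or to rigorous numerics. That is precisely the hard quantitative content of the proposition. The paper's point is that a single Lieb--Thirring moment bound delivers all three pieces of information simultaneously, with explicit constants and no numerics. Absent an equally effective substitute for Lieb--Thirring, your outline stops short of a proof; if you want to complete it along your lines, the cleanest fix is to transplant the Lieb--Thirring argument into the unweighted picture (which is unitarily equivalent to yours) rather than to chase comparison operators.
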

\begin{rem}
The inequality \eqref{assumption} is actually a simple consequence of \eqref{firstblah}.  A straight forward calculation yields  
\[  2-2 \sech^2(x) - 4  \sech^2(x) \tanh(x) > -2 \]
and hence
\begin{equation}\label{rtz} B(f)\geq \LPN 2 {f_x}^2-2\LPN 2 {f}^2. \end{equation}
Rewriting $B=9B/10+B/10$ and using \eqref{rtz} and \eqref{firstblah} to estimate the first and second term respectively, we obtain \eqref{assumption}.

Note also that the constant $1/10$ is neither optimal nor of any particular importance in the context of the paper, as we will simply require the  existence of a non-negative 
constant. 
\end{rem}
\begin{proof}
  First consider the Schrödinger $H=-\partial_{xx}+V(x)$ operator with
  potential $V(x):= -2\sech^2(x) - 4 \sech^2(x)\tanh(x)$.  A
  celebrated theorem by Lieb and Thirring \cite{lieb2005inequalities}
  gives us a bound on the moments of the bound states energies
  (negative eigenvalues) $e_j$ of $H$:
\begin{equation*}
\sum_j \abs{e_j}^{\gamma}\leq L_{\gamma,1}\int \abs{V(x)}_{-}^{\gamma+n/2}
\end{equation*}
for $\gamma\geq\frac32$, where $\abs{V(x)}_{-}=(\abs{V(x)}-V(x))/2$ and
\begin{equation*}
L_{\gamma,1}=\frac{1}{2\sqrt{\pi}}\Gamma(\gamma+1)/\Gamma\left(\gamma+\frac{3}{2}\right).
\end{equation*}
In particular for $\gamma:=\frac{3}{2}$ we have 
\begin{equation}\label{lieb}
\sum_j \abs{e_j}^{3/2}\leq \frac{3}{16}\int \abs{V(x)}_{-}^{2}= 567/320,
\end{equation}
where the second equality  involves determining the support of 
$\abs{V(x)}_{-}$, and an evaluation of the integral. This was done with the help of Mathematica, but could easily be done by hand. 

It follows immediately that the ground state satisfies the bound
\begin{equation} \label{rangeg} e_0 \geq  -(567/320)^{2/3} > - \frac{3}{2}.\end{equation} 
Now, let $u=\sqrt{2 / \pi}e^{x/2}\sech^2(x)$ -- this is normalised so that the $L^2$ norm of $u$ is $1$.  Then an explicit calculation yields
\[\ip{H(u),u} = -5/4,\]
and thus
\[
-5/4\geq e_0 \geq -(567/320)^{2/3}.
\]

Furthermore from (\ref{lieb}) if we denote the ground state as $v_0$ we have
\begin{align*}
-5/4=H(u)\geq e_0 \abs{\ip{u,v_0}}^2 -(567/320-\abs{e_0}^{3/2})^{2/3}(1-\abs{\ip{u,v_0}}^2),
\end{align*}
hence
\[\abs{\ip{u,v_0}}^2 \geq  \frac{-5/4+(567/320-\abs{e_0}^{3/2})^{2/3}}{e_0+(567/320-\abs{e_0}^{3/2})^{2/3}}\]
Denoting the right hand side by $h(s)$ evaluated at $e_0$; then one can check -- either with the help of a software package 
such as Mathematica, or by hand, with patience -- 
that for $s$ satisfying the bounds \eqref{rangeg},  $h$ has a minimum 
$\frac{1701+\sqrt{1435533}}{3402}$ at $s=-\frac{721489+567 \sqrt{1435533}}{960000}$.
Hence we obtain
\begin{equation}\label{square}  \abs{\ip{u,v_0}}^2 \geq \frac{1701+\sqrt{1435533}}{3402}> \frac{5}{6}.\end{equation} 

Also as a consequence of (\ref{lieb}) and (\ref{rangeg}), we have that for any $v\in H^2$ in the orthogonal complement of $v_0$ 
\begin{equation}\label{second}\ip{H(v),v}\geq -\left(567/320-(5/4)^{3/2}\right)^{2/3}\LPN{2} v^2
\geq - \frac{5}{9} \LPN{2} v^2. 
\end{equation}

Now pick $f\in H^2$ and let $f(x)=a v_0(x)+ g(x)$ be a $L^2$ orthonormal decomposition.  Then applying Young's inequality in the first inequality and orthogonality of $v_0$ and $g$ for the second inequality we have
\begin{align*}
 \ip{f,u}^2=&a^2\ip{v_0,u}^2+2a\ip{v_0,u}\ip{g,u}+\ip{g,u}^2\\ 
\geq & \frac{a^2}2 \ip{v_0,u} - \langle g, u \rangle^2 
\\ \geq & \frac{a^2}{2}\ip{v_0,u}^2- \LPN{2}{g}^2 (1-\ip{v_0,u}^2),
\end{align*}
hence 
\[
\ip{H(f),f}+2\ip{f,u}^2 \geq a^2 \left(e_0+ \ip{v_0,u}^2\right)+\LPN{2}{g}^2 
\left(-\frac59-2(1-\ip{v_0,u}^2)\right). 
\]
The claim \eqref{assumption} follows from the observations 
\[ e_0 + \langle v_0,u \rangle^2 \ge -\frac32 + \frac56 = -\frac23,\text{ and } 
   -\frac59-2(1-\ip{v_0,u}^2)\geq -\frac59 -\frac26 = - \frac8 9,  \]  
since  $\frac54- \frac89 > \frac13$.  
\end{proof}

\section{Higher energies} 
\label{conserved} 

In order to study higher regularity we need to make use of higher order
\emph{polynomial conservation laws} (see \cite{MR0252826} and
\cite{MR0271527}) associated with KdV.  Specifically, if $u$ is a smooth solution to \eqref{KdV}, then for every
integer $k\geq 0$, there exists polynomials $T^{(k)}$ and $X^{(k)}$ in
$u$ and its derivatives such that
\begin{equation}\label{conslaw}
\partial_t T^{(k)}+ \partial_x X^{(k)}:=0,
\end{equation}
and the following additional properties are satisfied:
\begin{itemize}
\item The polynomial $T^{(k)}$ is irreducible.
\item The \emph{rank}\footnote{Here the rank of a monomial is
  $m+\frac{n}{2}$, where $m$ and $n$ are respectively the degree and
  total number of differentiations of the monomial. Only terms whose rank is an integer occur.} of all monomials
  contained in $T^{(k)}$ is $2+k$.
\item The rank of all monomials
  contained in $X^{(k)}$ is $3+k$.
\item The \emph{dominant}\footnote{Writing a monomial in the form
  $cu^{a_0}u_x^{a_1}\dots(\partial^l_x u)^a_l$, then the dominant term
  is the term with the larger $l$, or the same $l$ but larger $a_l$,
  or with the same $l$ and $a_l$, but with larger $a_{l-1}$, etc.}
  term of $T^{(k)}$ is $(\partial_x^ku)^2$.
\item The polynomial $X^{(k)}$ has two terms with maximal
  \emph{derivative index}\footnote{The derivative index of a monomial
    refers to the number of differentiations in the monomial.}, namely
  $2\partial_x^ku\partial_x^{k+2}u$ and $-(\partial_x^{k+1}u)^2$. 
\end{itemize}

\bibliographystyle{amsplain}
\bibliography{article}

\end{document}